\newtheorem{theorem}{Theorem}[section]
\newtheorem{proposition}[theorem]{Proposition}
\newtheorem{lemma}[theorem]{Lemma}
\newtheorem{assumption}{Assumption}[section]
\numberwithin{equation}{section}
\renewcommand{\div}{\mathop{\rm div}\nolimits}
\newcommand{\macrogrid}[1]{{\cal T}_{#1}}
\newcommand{\hiergrid}[1]{S_{#1}}
\newcommand{\wc}{\rightharpoonup}
\newcommand{\xoe}{{x\over\epsilon}}
\newcommand{\ep}{\epsilon}
\newcommand{\todo}[1]{{\color{red}{#1}}}
\newcommand{\dx}{ \mathrm{d}x}
\newcommand{\dy}{ \mathrm{d}y}
\newcommand{\dt}{ \mathrm{d}t}
\newcommand{\beq}{\begin{equation}}
\newcommand{\eeq}{\end{equation}}
\newcommand{\beqas}{\begin{eqnarray*}}
\newcommand{\eeqas}{\end{eqnarray*}}
\title{\bf Hierarchical multiscale finite element method for multi-continuum media }
\author{
Jun Sur Richard Park,\\
Department of Mathematics,\\
Texas A\&M University, College Station, TX 77843 \\[10pt]
Viet Ha Hoang,\\
Division of Mathematical Sciences,\\
School of Physical and Mathematical Sciences,\\
 Nanyang Technological University, Singapore 637371 
}
\begin{document}
\maketitle
\begin{abstract}

 Simulation in media with multiple continua where each continuum interacts with every other is often challenging due to multiple scales and high contrast. One needs some types of model reduction. One of the approaches is multi-continuum technique, where every process in each continuum is modeled separately and an interaction term is added. 
Direct numerical simulation in multi scale media is usually not practicable. For this reason, one constructs the corresponding homogenized equations.
Computing the effective coefficients of the homogenized equations can be expensive because one needs to solve local cell problems for a large number of macroscopic points. 
The paper develops a hierarchical approach for solving cell problems at a dense network of macroscopic points with an essentially optimal computation cost.
The method employs the fact that
neighboring representative volume elements (RVEs) share
similar features; and effective properties of the neighboring 
RVEs are close to each other.
The hierarchical approach reduces computation cost by
using different levels of  resolution for cell problems at different macroscopic points. Solutions of the cell problems at macroscopic points where approximation spaces with a higher level of resolution are used are employed to correct the solutions at nearby macroscopic points that are computed by approximation spaces with a lower level of resolution.
The method requires a hierarchy of macroscopic grid points and a corresponding nested approximation spaces with different levels of resolution. Each level of macroscopic points is assigned to an approximation finite element (FE) space which is used to solve the cell problems at the macroscopic points in that level.
We prove rigorously that this hierarchical method achieves the same level of accuracy as that of the full solve where cell problems at every macroscopic point are solved using the FE spaces with the highest level of resolution, but at the essentially optimal computation cost. 
Numerical implementation that computes effective permeabilities of a two scale multicontinuum system via the numerical solutions of the cell problems supports the analytical results.  Finally, 
we prove the homogenization convergence for our multiscale multi-continuum system.
\end{abstract}
{\bf Key words.} multiscale modelling, multi-continuum,  homogenization, effective properties, hierarchical finite elements, optimal complexity.

\section{Introduction}




Media with multiple continua where each continuum interacts with other continua often entail multiple scales and high contrast. For example, fractured media can have multiscale and high contrast due to complex material properties and geography of fractures. Therefore, numerical simulations in this type of media can be expensive and require model reductions. This can be achieved by computing effective properties in each coarse block using the solutions of local representative volume element (RVE) problems. {In multi-continuum approaches \cite{barenblatt1960basic,warren1963behavior,kazemi1976numerical,wu1988multiple,pruess1982fluid}, equations for each continuum are written separately with so called interaction terms. Therefore, one has to deal with a system of equations and compute the effective properties from it.} 

There have been several methods to solve multiscale equations without computing effective properties and establishing
homogenized equations. The multiscale finite element method (MsFEM) \cite{eh09} solves local cell problems in coarse blocks with fine mesh to obtain basis functions that capture small-scale information. The generalized multiscale finite element method (GMsFEM) 
\cite{chung2016adaptive, egh12, cho2017generalized} follows the outline of MsFEM but adds some degrees of freedom in each coarse block by building the snapshot spaces and solving local spectral problems in the spaces. 
Using GMsFEM for multi-continuum models is considered in \cite{chung2017coupling}.  
Although these methods have several advantages, they do not take into account the  periodicity or local periodicity of the structures of the media of interest and sometimes introduce high computation cost when they have very small fine mesh sizes. In this paper, we establish an efficient algorithm for obtaining homogenized equations utilizing microscopic periodicity for multiscale multicontiuum systems with optimal computation complexity.

The multi-continuum system depends on small 
scales. We study the problem via the homogenization approach. 
For locally periodic multiscale problems such as those considered in this paper, the homogenized problem can be found by the two scale asymptotic expansion \cite{bensoussan1978asymptotic, bakhvalov1989homogenisation, jikov2012homogenization}.
Constructing homogenized equations requires
solving local cell problems in microscale representative domain. 
Computation cost for solving these cell problems can be very high since 
 we have to solve different local cell problems at many macroscopic points. 

In this work we use two-scale asymptotic expansion (\cite{bensoussan1978asymptotic}, \cite{bakhvalov1989homogenisation}) 
to derive the homogenized equations of a two scale multi-continuum system.
Effective coefficients of the homogenized equation are established via the solutions of cell problems.
Since the coefficients depend on both macro- and micro-scale variables, 
a different set of cell equations needs to be solved at each macroscopic point. The number of equations to be solved is thus very large. Solving them using the same small mesh at every macroscopic point is extremely expensive. We develop in this paper the  hierarchical approach to solve the cell problems for the multi-continuum system for a large number of macroscopic points, using an optimal number of degrees of freedom, without sacrificing the accuracy. The method is developed for two scale elliptic problems in \cite{brown2013efficient} (see also \cite{brown2017hierarchical}). It solves cell problems for a dense hierarchical network of macroscopc points with different levels of resolution. The problems at those points belonging to a lower level in the hierarchy are solved with a higher level of accuracy. For the solution at a macroscopic point in a higher level in the hierarchy which are obtained with a lower level of accuracy, we use solutions at nearby macroscopic points that are solved with a higher level of accuracy to correct the error. We show that this hierarchical FE approach obtains the same level of accuracy at every macroscopic point as that obtained when every cell problem is solved with the highest level of resolution (we will refer to this as the full reference solve below), but uses only an essentially optimal number of degrees of freedom that is equal to that required to solve only one cell problem at the finest level of resolution (apart from a possible logarithmic factor).


The paper is organized as follows. In the next section, we 
set up the multiscale multi-continuum system; and we derive the homogenized equations from two scale asymptotic expansion. In Section 3, we outline the hierarchical finite element algorithm for solving the cell problems at a dense network of macroscopic points. We give a rigorous error estimates that show the algorithm has the equivalent accuracy as the full reference solve, at essentially optimal computation cost. In Section 4, we present numerical examples that
verify the theoretical results. We compute the effective permeabilities using the hierarchical solve and the full solve. We find that the effective permeabilities obtained from these two approaches are essentially equal, with a very small relative error between each other.
Finally, in Section 5, we rigorously prove the homogenization convergence for the two scale multi-continuum system.
The paper ends with the conclusions in Section 6.

Throughout the paper, by $\nabla$, we denote the gradient with respect to $x$ of a function that depends only on the spatial variable $x$ and the temporal variable $t$,
and by $\nabla_x$, we denote the partial gradient with respect to $x$ of a function that depends on $x$, $t$ and also other variables. 
Repeated indices indicate summation. The notation $\#$ denotes spaces of periodic functions. 
\section{Problem formulation}
 \subsection{Homogenization of multi-continuum systems}
In multi-continuum approaches, equations for each continuum are written separately. 
We denote by $u_i$ the solution for $i$th continuum. In the general case where each continuum interacts with every other continuum, we have the following system of equations introduced in \cite{chung2017coupling}
\begin{equation*}
\label{eq:multi_cont}
{\mathcal C}_{ii} ^\epsilon(x){\partial u_i^\epsilon(t,x) \over \partial t}=\text{div}(\kappa^\epsilon_i (x) \nabla u_i^\epsilon(t,x)) + Q_i^\epsilon(u_1^\epsilon(t,x),...,u_N^\epsilon(t,x)),\enspace\enspace \textrm{in} \enspace \Omega
\end{equation*}
where $\Omega \subset \mathbb{R}^d$ is a domain ($d = 2, 3$), $\kappa_i^\epsilon$ are the multiscale  permeability and $C_{ii}^\epsilon$ are the multiscale  porosities, and the functions $Q_i^\epsilon$ of $(u_1,...,u_N)$ are  exchange terms (see \cite{barenblatt1960basic,warren1963behavior,kazemi1976numerical,wu1988multiple,pruess1982fluid}) that describe the interaction of continua; $\epsilon$ represents the microscopic scale of the local variation. 
 If each continuum only interacts with the background $u_1^\ep$, we have
\[
{\mathcal C}_{ii}^\epsilon(x){\partial u_i^\epsilon(t,x) \over \partial t}=\text{div}(\kappa_i^\epsilon(x) \nabla u_i^\epsilon(t,x)) + Q_i^\epsilon(u_1^\epsilon(t,x), u_i^\epsilon(t,x)) + q,\ \ i>1,
\]
where $q$ is the source term. 
In this paper, we consider a two-continuum system. Let $Y$ be the unit cube in $\mathbb{R}^d$. Let ${\mathcal C}_{ii}(x,y)$, $\kappa_i(x,y)$ ($i=1,2$) be continuous functions on $\Omega\times Y$ which are $Y$-periodic with respect to $y$ and $q$ be a function in $L^2(\Omega)$. We assume further that there is a constant $c>0$ such that for all $x\in\Omega,y\in Y$
\beq
{\mathcal C}_{ii}(x,y)\ge c,\ \kappa_i(x,y)\ge c,\ Q(x,y)\ge c.
\label{eq:coercivity}
\eeq
We define the twoscale coefficients as
\[
{\mathcal C}_{ii}^\epsilon(x)={\mathcal C}_{ii}(x,{x\over\epsilon}),\ \ \kappa_i^\epsilon(x)=\kappa_i(x,{x\over\epsilon}),\ \ Q^\ep(x)=Q(x,{x\over\epsilon}).
\]
We consider in this paper the case where the interaction terms are scaled as $O(1/\epsilon^2)$; this case has the most interesting cell problems in the form of a coupled system. We consider
the multiscale multi-continuum system

\begin{equation}
\label{eq:main3'}
 \begin{split}
{\mathcal C}_{11}^\epsilon(x){\partial u_1^\epsilon(t,x)  \over \partial t}=\text{div}(\kappa_1^\epsilon(x)\nabla u_1^\epsilon(t,x)) + {1 \over \epsilon^2}Q^\epsilon(x)(u_2^\epsilon(t,x)-u_1^\epsilon(t,x)) + q,\\
{\mathcal C}_{22}^\epsilon(x){\partial u_2^\epsilon(t,x) \over \partial t}=\text{div}(\kappa_2^\epsilon(x) \nabla u_2^\epsilon(t,x)) + {1 \over \epsilon^2}Q^\epsilon(x)(u_1^\epsilon(t,x)-u_2^\epsilon(t,x)) + q,
 \end{split}
 \end{equation} 
 with the Dirichlet boundary condition $u_1^\epsilon(t,x)=u_2^\epsilon(t,x)=0$ for $x\in\partial \Omega$, and with the initial condition $u_1^\epsilon(0,x)=g_1$, $u_2^\epsilon(0,x)=g_2$ where $g_1$ and $g_2$ are in $L^2(\Omega)$.
We consider the following two scale asymptotic expansion of $u_1^\epsilon$ and $u_2^\epsilon$.
\begin{equation*}
\label{eq:main6}
 \begin{split}
u^{\epsilon}_1(t,x) = u_{10} (t,x,{x\over\epsilon})+ \epsilon u_{11}(t,x,{x\over\epsilon}) + \cdots,\ \ 
u^{\epsilon}_2(t,x) = u_{20} (t,x,\xoe)+ \epsilon u_{21}(t,x,\xoe)+ \cdots,
 \end{split}
 \end{equation*}
 where the functions $u_{1i}(t,x,y)$ and $u_{2i}(t,x,y)$ are periodic with respect to $y$. 
 Performing the two scale asymptotic expansion, from (\ref{eq:main3'}) we obtain
 \begin{equation}
  \label{eq:main7}
 \begin{split}
{\mathcal C}_{11}&{\partial (u_{10} + \epsilon u_{11} + \cdots) \over \partial t}\\ &= (\div_x + {1 \over \epsilon} \div_y)(\kappa_1 (\nabla_x + {1 \over \epsilon} \nabla_y)(u_{10} + \epsilon u_{11}+ \cdots))
+ {1 \over \epsilon^2} Q(u_{20}+ \epsilon u_{21} - u_{10}- \epsilon u_{11}+ \cdots) + q, \\
{\mathcal C}_{22}&{\partial (u_{20} + \epsilon u_{21} + \cdots) \over \partial t}\\ &= (\div_x + {1 \over \epsilon} \div_y)(\kappa_2 (\nabla_x + {1 \over \epsilon} \nabla_y)(u_{20} + \epsilon u_{21}+ \cdots))
+ {1 \over \epsilon^2} Q(u_{10}+ \epsilon u_{11} - u_{20}- \epsilon u_{21}+ \cdots) + q, 
 \end{split}
 \end{equation}
For the  $O(\epsilon^{-2})$ terms, we obtain,
  \begin{equation*}
  \label{eq:main8}
 \begin{split}
&\div_y(\kappa_1(x,y)\nabla_y u_{10}(t,x,y))+Q(x,y)(u_{20}(t,x,y) - u_{10}(t,x,y)) = 0\\
&\div_y(\kappa_2(x,y)\nabla_y u_{20}(t,x,y))+Q(x,y)(u_{10}(t,x,y) - u_{20}(t,x,y)) = 0.
 \end{split}
 \end{equation*}
From this, we have
  \begin{equation*}
  \label{eq:main9}
 \begin{split}
-\int_Y \kappa_1\nabla_y u_{10} \cdot \nabla_y u_{10} \mathrm{d}y + \int_Y Q(u_{20}-u_{10})u_{10} \mathrm{d}y = 0\\
-\int_Y \kappa_2\nabla_y u_{20} \cdot \nabla_y u_{20} \mathrm{d}y  + \int_Y Q(u_{10}-u_{20})u_{20} \mathrm{d}y  = 0
 \end{split}
 \end{equation*}
Adding these two equations, we obtain
   \begin{equation*}
  \label{eq:main10}
 \begin{split}
\int_Y \kappa_1\nabla_y u_{10} \cdot \nabla_y u_{10}\mathrm{d}y  + \int_Y \kappa_2\nabla_y u_{20} \cdot \nabla_y u_{20}\mathrm{d}y 
 +\int_Y Q(u_{20}-u_{10})^2\mathrm{d}y = 0.\\
 \end{split}
 \end{equation*}
This implies $\nabla_y u_{10} = 0$, $\nabla_y u_{20} = 0$. i.e. $u_{10}$ and $u_{20}$ are independent of $y$, and $u_{10}(t,x) = u_{20}(t,x) $ as $Q(x,y)>c>0\ \forall\,x\in\Omega,y\in Y$. For the $O(\epsilon^{-1})$ terms in (\ref{eq:main7}), we have,
\begin{equation*}
\label{eq:main11}
\begin{split}
&\div_x(\kappa_1\nabla_y u_{10}) + \div_y(\kappa_1\nabla_x u_{10})+\div_y (\kappa_1\nabla_y u_{11})+Q(u_{21}-u_{11}) = 0\\
&\div_x(\kappa_2\nabla_y u_{20}) + \div_y(\kappa_2\nabla_x u_{20})+\div_y (\kappa_2\nabla_y u_{21})+Q(u_{11}-u_{21}) = 0.
\end{split}
\end{equation*}
Since $u_{10}$ and $u_{20}$ are independent of $y$, we have
\begin{equation*}
\label{eq:main12}
\begin{split}
\div_y (\kappa_1\nabla_y u_{11})+Q(u_{21}-u_{11}) = -\div_y(\kappa_1\nabla u_{10})\\
\div_y (\kappa_2\nabla_y u_{21})+Q(u_{11}-u_{21}) = -\div_y(\kappa_2\nabla u_{20})
\end{split}
\end{equation*}
Thus $u_{11} = {\partial u_{10} \over \partial x_i} N^i_1 $ and $u_{21} = {\partial u_{20} \over \partial x_i} N^i_2$ 
where $N_1^i(x,\cdot)\in H^1_\#(Y)/\mathbb{R}$, and $N_2^i(x,\cdot)\in H^1_\#(Y)/\mathbb{R}$ are solutions of the cell problem
\begin{equation}
  \label{eq:cell}
 \begin{split}
   \text{div}_y(\kappa_1(x,y) (e^i+\nabla_y N^i_{1})) +
   Q(x,y) ( N^i_{2}- N^i_{1}) =0\\
   \text{div}_y(\kappa_2(x,y) (e^i+\nabla_y N^i_{2})) +
   Q(x,y) ( N^i_{1}- N^i_{2}) =0
 \end{split}
\end{equation}
where $e^i$ is the $i$th unit vector in the standard basis of ${\mathbb R}^d$.
For the  $O(\epsilon^0)$ terms in (\ref{eq:main7}), integrating over $Y$, one has
\begin{equation*}
 \label{eq:main14}
 \begin{split}
\int_Y {\mathcal C}_{11} {\partial u_{10} \over \partial t}\mathrm{d}y  = \int_Y \div_x(\kappa_1 \nabla u_{10}) \mathrm{d}y 
+ \int_Y \div_x (\kappa_1 \nabla_y u_{11})\mathrm{d}y  +\int_Y Q(u_{22}- u_{12})\mathrm{d}y + \int_Y q \mathrm{d}y \\
\int_Y {\mathcal C}_{22} {\partial u_{20} \over \partial t}\mathrm{d}y 
 = \int_Y \div_x(\kappa_2 \nabla u_{20}) \mathrm{d}y + \int_Y \div_x (\kappa_2 \nabla_y u_{21})\mathrm{d}y  +\int_Y Q(u_{12}- u_{22})\mathrm{d}y 
 + \int_Y q\mathrm{d}y 
 \end{split}
\end{equation*}
Adding these two equations, one obtains the homogenized equation
\begin{equation}
 \label{eq:main15}
 \begin{split}
\left(\int_Y {\mathcal C}_{11} \mathrm{d}y )+\int_Y {\mathcal C}_{22}  \mathrm{d}y \right){\partial u_{0} \over \partial t} 
= \div(\kappa^*_1 \nabla u_0) + \div(\kappa^*_2 \nabla u_0)
+\int_Y 2q \mathrm{d}y  \qquad     \text{in} \enspace \Omega 
\end{split}
\end{equation}
where $u_0 = u_{10} = u_{20}$ and the $x$-dependent permeabilities are defined as
\begin{equation}
 \label{eq:main16}
 \begin{split}
\kappa^*_{1ij}(x) =  \int_Y  \kappa_1 (x,y)(\delta_{ij} + {\partial  N^j_1(x,y)\over \partial y_i}) dy,\ \ 
\kappa^*_{2ij}(x) =  \int_Y  \kappa_2 (x,y)(\delta_{ij} + {\partial  N^j_2(x,y)\over \partial y_i}) dy
\end{split}
\end{equation}
We will show later that the homogenized matrices $\kappa^*_{1ij}(x)$ and $\kappa^*_{2ij}(x)$ are positive definite.
We will also show that the initial condition for $u_0$ is
\beq
u_0(0,x)={\langle C_{11}\rangle g_1(x)+\langle C_{22}\rangle g_2(x)\over\langle C_{11}\rangle+\langle C_{22}\rangle}
\label{eq:initcond}
\eeq
where $\langle C_{ii}\rangle=\int_YC_{ii}(y)dy$ for $i=1,2$. Equation \eqref{eq:main15} together with initial condtion \eqref{eq:initcond} has a unique solution (see, e.g., \cite{wlokapartial}). 

\subsection{Uniqueness of solution to the cell problem}
We  write the system (\ref{eq:cell}) in the variational form as
\begin{equation}
\label{eq:main17}
\begin{split}
\int_Y \kappa_1(x,y) \nabla_y N^i_1(x,y) \cdot \nabla_y \phi_1(y)\mathrm{d}y  
- \int_Y Q(x,y)(N^i_2 - N^i_1)\phi_1(y)\mathrm{d}y  = - \int_Y \kappa_1(x,y) e^i \cdot \nabla_y \phi_1(y)\mathrm{d}y \\
\int_Y \kappa_2(x,y) \nabla_y N^i_2(x,y) \cdot \nabla_y \phi_2(y) \mathrm{d}y - \int_Y Q(x,y)(N^i_1 - N^i_2)\phi_2(y)\mathrm{d}y 
 = - \int_Y \kappa_2(x,y) e^i \cdot \nabla_y \phi_2(y)\mathrm{d}y 
\end{split}
\end{equation}
where $\phi_1,\phi_2 \in H^1_{\#}(Y)$.
Let $W$ be the space $H^1_\#(Y)\times H^1_\#(Y)/(c,c), c\in \mathbb{R}$. The space $W$ is equipped with the norm
\[
|||(\phi_1,\phi_2)|||=\|\nabla_y\phi_1\|_{L^2(Y)}+\|\nabla_y\phi_2\|_{L^2(Y)}+\|\phi_1-\phi_2\|_{L^2(Y)}.
\]
 For $x\in\Omega$, we define the bilinear form $B(x;\cdot,\cdot):W\times W\to\mathbb{R}$ as
\beqas
B(x;(\phi_1,\phi_2),(\psi_1,\psi_2))&=&\int_Y\kappa_1(x,y)\nabla_y\phi_1(y)\cdot\nabla_y\psi_1(y)dy+\int_Y\kappa_2(x,y)\nabla_y\phi_2(y)\cdot\nabla_y\psi_2(y)dy\\
&&+\int_YQ(x,y)(\phi_1(x,y)-\phi_2(x,y))(\psi_1(x,y)-\psi_2(x,y))dy
\eeqas
for $(\phi_1,\phi_2)\in W$ and $(\psi_1,\psi_2)\in W$. 
From \eqref{eq:coercivity}, we deduce that the bilinear form $B$ is uniformly coercive and bounded with respect to $x\in\Omega$, i.e. there are constants $c_1>0$ and $c_2>0$ such that
\[
B(x;(\phi_1,\phi_2),(\phi_1,\phi_2))\ge c_1|||(\phi_1,\phi_2)|||^2,\ \mbox{and} \ B(x;(\phi_1,\phi_2),(\psi_1,\psi_2))\le c_2|||(\phi_1,\phi_2)|||\cdot|||(\psi_1,\psi_2)|||
\]
for all $(\phi_1,\phi_2)\in W$ and $(\psi_1,\psi_2)\in W$. 
Adding the two equations in (\ref{eq:main17}), we obtain
\begin{equation*}
\label{eq:main18}
\begin{split}
 B(x;(N_1^i,N_2^i)(\phi_1,\phi_2))
= - \int_Y \kappa_1(x,y) e^i \cdot \nabla_y \phi_1(y) \mathrm{d}y - \int_Y \kappa_2(x,y) e^i \cdot \nabla_y \phi_2(y)\mathrm{d}y. 
\end{split}
\end{equation*}

\begin{theorem}
Problem (\ref{eq:main17}) has a unique solution $(N^i_1, N^i_2) \in W$.
\end{theorem}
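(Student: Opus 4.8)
The plan is to recast the coupled system \eqref{eq:main17} as a single variational equation on the space $W$ and then apply the Lax--Milgram theorem. Adding the two equations of \eqref{eq:main17} produces the problem of finding $(N_1^i,N_2^i)\in W$ such that $B(x;(N_1^i,N_2^i),(\phi_1,\phi_2))=F(\phi_1,\phi_2)$ for all $(\phi_1,\phi_2)\in W$, where $F(\phi_1,\phi_2)=-\int_Y\kappa_1(x,y)e^i\cdot\nabla_y\phi_1\,dy-\int_Y\kappa_2(x,y)e^i\cdot\nabla_y\phi_2\,dy$. I would first observe that this single equation is in fact equivalent to the full system: testing against the class of $(\phi_1,0)$ recovers the first equation of \eqref{eq:main17} for every $\phi_1\in H^1_\#(Y)$, testing against $(0,\phi_2)$ recovers the second, and by linearity these two families span $W$. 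Hence existence and uniqueness for the scalar problem $B=F$ yields existence and uniqueness for \eqref{eq:main17}.

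Before invoking Lax--Milgram I would check that the problem is well posed on the quotient $W=(H^1_\#(Y)\times H^1_\#(Y))/\{(c,c):c\in\mathbb{R}\}$. Both $B$ and $F$ descend to $W$ because replacing $(\phi_1,\phi_2)$ by $(\phi_1+c,\phi_2+c)$ changes neither the gradients $\nabla_y\phi_1,\nabla_y\phi_2$ nor the difference $\phi_1-\phi_2$. Moreover $|||\cdot|||$ is a genuine norm on $W$: if $|||(\phi_1,\phi_2)|||=0$ then $\nabla_y\phi_1=\nabla_y\phi_2=0$, so $\phi_1$ and $\phi_2$ are constants, and $\|\phi_1-\phi_2\|_{L^2(Y)}=0$ forces these constants to coincide, i.e. $(\phi_1,\phi_2)$ is the zero class in $W$.

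The key step is to show that $(W,|||\cdot|||)$ is complete, so that it is a Hilbert space once equipped with the inner product whose induced norm is equivalent to $|||\cdot|||$. For this I would fix in each class the unique representative with $\int_Y\phi_1\,dy=0$ and use the Poincar\'e--Wirtinger inequality $\|\phi_1\|_{L^2(Y)}\le C\|\nabla_y\phi_1\|_{L^2(Y)}$ to bound $\|\phi_1\|_{H^1(Y)}$ by $|||(\phi_1,\phi_2)|||$; then $\|\phi_2\|_{L^2(Y)}\le\|\phi_1-\phi_2\|_{L^2(Y)}+\|\phi_1\|_{L^2(Y)}$ together with $\|\nabla_y\phi_2\|_{L^2(Y)}\le|||(\phi_1,\phi_2)|||$ bounds $\|\phi_2\|_{H^1(Y)}$ as well. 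Since the reverse inequality $|||(\phi_1,\phi_2)|||\le C\|(\phi_1,\phi_2)\|_{H^1(Y)\times H^1(Y)}$ is immediate, $|||\cdot|||$ is equivalent on these representatives to the full product norm. This identifies $(W,|||\cdot|||)$, up to equivalence of norms, with the closed subspace $\{(\phi_1,\phi_2)\in H^1_\#(Y)\times H^1_\#(Y):\int_Y\phi_1\,dy=0\}$ of the product Hilbert space, so it is complete.

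It remains to note that $F$ is a bounded linear functional on $W$: since $\kappa_1,\kappa_2$ are continuous on the compact set $\overline{\Omega}\times Y$, one has $|F(\phi_1,\phi_2)|\le\|\kappa_1\|_{L^\infty}\|\nabla_y\phi_1\|_{L^2(Y)}+\|\kappa_2\|_{L^\infty}\|\nabla_y\phi_2\|_{L^2(Y)}\le C|||(\phi_1,\phi_2)|||$, and $F$ is invariant under the diagonal shift exactly as $B$ is. With the uniform coercivity and boundedness of $B$ already established, Lax--Milgram then delivers a unique $(N_1^i,N_2^i)\in W$ solving $B=F$, hence solving \eqref{eq:main17}. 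I expect the completeness and norm-equivalence step to be the main obstacle, since the seminorm $|||\cdot|||$ couples the two components only through their difference $\phi_1-\phi_2$; the zero-mean normalization of $\phi_1$ combined with Poincar\'e--Wirtinger is what makes the control of each component transparent.
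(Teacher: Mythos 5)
Your proposal is correct and follows essentially the same route as the paper, which simply invokes the boundedness and coercivity of $B$ (established just before the theorem) together with the Lax--Milgram lemma. The additional details you supply---the equivalence of the summed variational equation with the original system, the well-definedness of $B$ and $F$ on the quotient space $W$, and the completeness of $(W,|||\cdot|||)$ via the zero-mean normalization and Poincar\'e--Wirtinger---are exactly the points the paper leaves implicit, and you verify them correctly.
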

\begin{proof}
The conclusion follows from the boundedness and coerciveness of the bilinear form $B$ and the Lax-Milgram lemma. 
\end{proof}

\section{Hierarchical finite element algorithm}

%

Computing effective coefficients $\kappa_i^*(x)$
requires the solutions of the cell problems (\ref{eq:cell}) at many macroscopic points
which can be very expensive.
We develop in this section the hierarchical FE method which computes the solution of the cell problems at a dense network of macroscopic points using only an essentially optimal number of degrees of freedom which is equal to that for solving one cell problem (apart from a multiplying logarithmic factor).
We assume that the coefficients are sufficiently smooth with respect to  the macroscopic variable $x$. 
We make the following assumption.
\begin{assumption}
\label{Lipschitz}
There is a constant $C > 0$ such that for all $x$, $x' \in \Omega$,
\begin{equation*}
\label{eq:main23''}
\begin{split}
\|\kappa_1(x,\cdot) - \kappa_1(x',\cdot)\|_{L^\infty(Y)}\le C|x-x'|,\ \|\kappa_2(x,\cdot) - \kappa_2(x',\cdot)\|_{L^\infty(Y)}\le C|x-x'|,\\
\mbox{and}\ \|Q(x,\cdot) - Q(x',\cdot)\|_{L^\infty(Y)} \leq C|x-x'|
\end{split}
\end{equation*}
\end{assumption}
%

\subsection{ Overview of hierarchical algorithm}


We develop an efficient hierarchical finite element algorithm to solve the coupled cell problem (\ref{eq:cell}) numerically and to approximate the effective properties $\kappa_i^*(x)$ in \eqref{eq:main16} for a dense network of macroscopic points $x\in \Omega$.
We follow the algorithm introduced in \cite{brown2013efficient}. 

We outline the algorithm as follows.

\textbf{Step 1 : Build nested finite element spaces.}
We employ Gelerkin FE to obtain an approximation of the solution $(N^i_1, N^i_2)\in W$ of (\ref{eq:cell}) for each macroscopic point $x \in \Omega$ using FE spaces of different levels of resolution.
We assume that there exists a hierarchy of FE spaces
${\cal V}_{0} \subset {\cal V}_{1}\subset\cdots\subset {\cal V}_{L} \subset H^1_{\#}(Y)$  where the integer index $L$ denotes the resolution level.
We assume further the following approximation properties: for $w\in H^2_\#(Y)$, 
\beq
\inf_{\phi\in{\mathcal V}_{L-l}}\|\nabla_y(w-\phi)\|_{L^2(Y)}+2^{L-l}\|w-\phi\|_{L^2(Y)}\le C2^{-L+l}\|w\|_{H^2(Y)},
\label{eq:FEapprox}
\eeq
where
the constant $C$ is independent of $L$ and $l$.  


\textbf{Step 2 : Build a hierarchy of macrogrids.}
We solve the cell equations at different macroscopic points $x \in \Omega$ with different levels of accuracy. We use the solutions solved with a higher  accuracy level to correct the solutions obtained with a lower accuracy level. We achieve this by solving the cell problems at different macroscopic points using different FE spaces in the hierarchy in Step 1.
This can be done by constructing a hierarchy of macro-grid points.
We construct a nested macro-grid,
 $\macrogrid{0}\subset \macrogrid{1}\subset\cdots\subset \macrogrid{L} \subset \Omega$
as follows.  
First, we build an initial grid $\macrogrid{0}$ with a proper grid spacing $H$, the maximal distance between neighboring nodes.
We then inductively construct $\macrogrid{l}$, a refinement of
$\macrogrid{l-1}$,
with grid spacing $H 2^{-l}$. 
Then, we define the hierarchy of macro-grids, 
$\{\hiergrid{0},\hiergrid{1},\cdots,\hiergrid{L}\}$
as $\hiergrid{0}=\macrogrid{0}$, $\hiergrid{1}=\macrogrid{1}\backslash \hiergrid{0}$, 
and for each $l > 1$, we have
 \begin{equation*}
\label{eq:main24}
\begin{split}
 \hiergrid{l}=\macrogrid{l}\Big\backslash \bigg({\bigcup_{k<l}\hiergrid{k}}\bigg)
 \end{split}
\end{equation*}
 We call the nodes in the lowest level grid $\hiergrid{0}$ the anchor points.
In this way, we obtain a dense hierarchy of the macro-grids. That is, 
each point $x \in \hiergrid{l}$ has at least one point from one of the previous levels,
 $x' \in \bigcup_{k<l} \hiergrid{k}$ such that dist($x$, $x'$) $<$ $O(H 2^{-l})$. 
 Figures \ref{fig:macrogrid} and \ref{fig:hiergrid} show an example of 3-level hierarchy of macrogrids
 $\macrogrid{l}$, $\hiergrid{l}$, $l = 1,2,3$, constructed in $\Omega = [0, 1]^2$.\\
 \begin{figure}[t!]
 \centering
  \begin{subfigure}[b]{0.255\linewidth}
    \includegraphics[width=\linewidth]{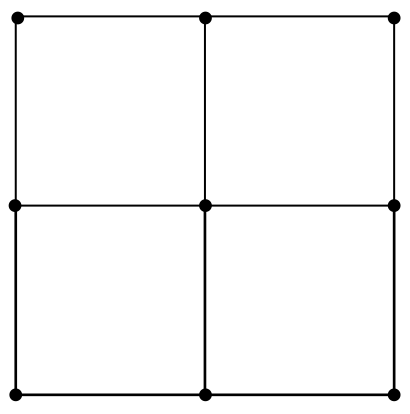}
    \caption{$\macrogrid{0}$}
  \end{subfigure}
  ~~~~
  \begin{subfigure}[b]{0.255\linewidth}
    \includegraphics[width=\linewidth]{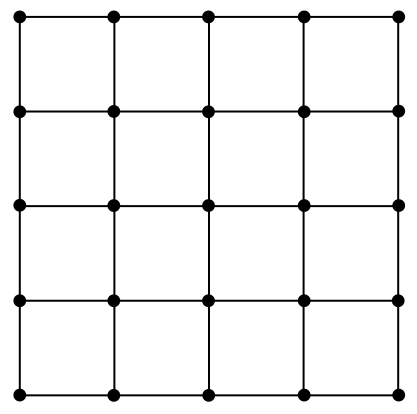}
    \caption{$\macrogrid{1}$}
  \end{subfigure}
  ~~~~
   \begin{subfigure}[b]{0.255\linewidth}
    \includegraphics[width=\linewidth]{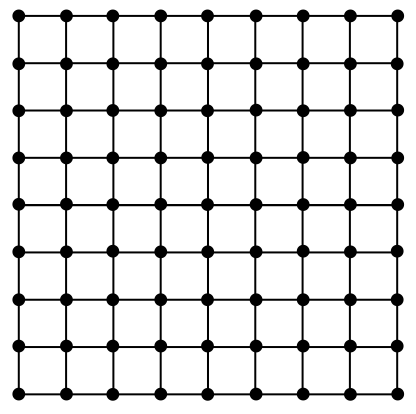}
    \caption{$\macrogrid{2}$}
  \end{subfigure}
  \caption{3-level nested macrogrids}
  \label{fig:macrogrid}

  \begin{subfigure}[b]{0.255\linewidth}
    \includegraphics[width=\linewidth]{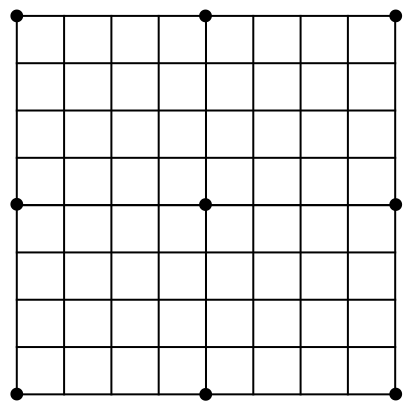}
    \caption{$\hiergrid{0}$}
  \end{subfigure}
  ~~~~
  \begin{subfigure}[b]{0.255\linewidth}
    \includegraphics[width=\linewidth]{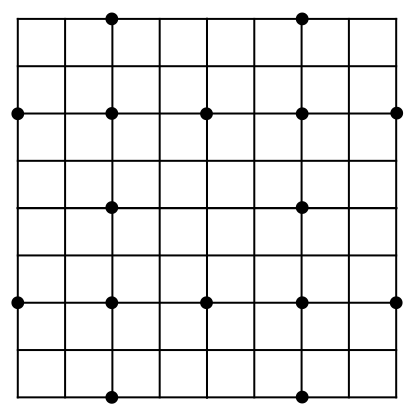}
    \caption{$\hiergrid{1}$}
  \end{subfigure}
  ~~~~
   \begin{subfigure}[b]{0.255\linewidth}
    \includegraphics[width=\linewidth]{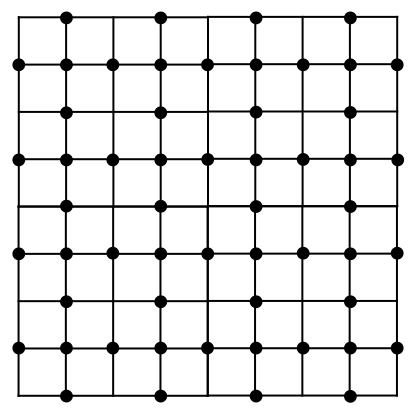}
    \caption{$\hiergrid{2}$}
  \end{subfigure}
  \caption{3-level hierarchy of macrogrids}
  \label{fig:hiergrid}
\end{figure}
%
%

 \textbf{Step 3 : Calculating the correction term.}
 We relate the nested FE spaces and the hierarchy of macrogrids for our algorithm. We first solve the cell problems at  anchor points using the standard Galerkin FE with FE space $\mathcal{V}_L$. 
 That is, for the points in the coarsest macro-grid $\hiergrid{0}$,
 we solve the  cell problems with the finest mesh.
 More precisely, we find $\bar{N^i_1}(x,\cdot), \bar{N^i_2}(x,\cdot)  \in \mathcal{V}_L$, such that
\begin{equation*}
\label{eq:main24'}
\begin{split}
B(x;(\bar N_1^i,\bar N_2^i),(\phi_1,\phi_2))=- \int_Y \kappa_1(x,y) e^i \cdot \nabla_y \phi_1(y)\mathrm{d}y-\int_Y \kappa_2(x,y) e^i \cdot \nabla_y \phi_2(y)\mathrm{d}y
\end{split}
\end{equation*}
for all $\phi_1,\phi_2 \in \mathcal{V}_L$. 
Proceeding inductively, for $x \in \hiergrid{l}$ ($l=1,\cdots,L$), we choose the points  $\{x_{1},x_{2},\cdots,x_{n}\} \in (\bigcup_{l'<l}\hiergrid{l'})$ so that the distance between $x$ and each point in $\{x_{1},x_{2},\cdots,x_{n}\} $ is 
$O(H2^{-l})$.
This is possible from the assumption for the hierarchy of macroscopic points above. 
We define the $l$-th macro-grid interpolation by
\begin{equation*}
\label{eq:interpolation}
 I_{l}^{x}(N^i_k)=\sum_{j=1}^{n}c_{j} N^i_k(x_{j},\cdot),
 \end{equation*}
 where the coefficients $c_{j}$ satisfy $\sum_{j=1}^{n}c_{j}=1$ ($k=1,2$). We refer to the $l$-th macro-grid interpolation of Galerkin approximations
 as $I_{l}^{x}(\bar{N^i_k})=\sum_{j=1}^{n}c_{j}\bar{N^i_k}(x_{j},\cdot)$.
  We solve the following problem: 
 Find ${\bar{N^i_1}}^c(x,\cdot), {\Bar {N^i_2}}^c (x,\cdot) \in {\cal V}_{L-l}$
  such as
\begin{equation}
\begin{split}
&B(x;({\bar{N^i_1}}^c, {\bar{N^i_2}}^c),(\phi_1,\phi_2))\\
&=  
-\sum_{j=1}^n c_j  \int_Y (\kappa_1(x,y) -  \kappa_1(x_j,y))\nabla_y \bar{N^i_1}(x_j,y) \cdot \nabla_y \phi_1(y) \mathrm{d}y-\sum_{j=1}^n c_j \int_Y (\kappa_1(x,y) - \kappa_1(x_j,y)) e^i \cdot \nabla_y \phi_1(y) \mathrm{d}y\\
&-\sum_{j=1}^n c_j  \int_Y (\kappa_2(x,y) -  \kappa_2(x_j,y))\nabla_y \bar{N^i_2}(x_j,y) \cdot \nabla_y \phi_2(y)\mathrm{d}y-\sum_{j=1}^n c_j \int_Y (\kappa_2(x,y) - \kappa_2(x_j,y)) e^i \cdot \nabla_y \phi_2(y)\mathrm{d}y\\
&\qquad\qquad+\sum_{j=1}^{n}c_{j} \int_Y (Q(x_j,y)-Q(x,y)) (\bar{N^i_1}(x_{j},y)- \bar{N^i_2}(x_{j},y))(\phi_1(y)- \phi_2(y)) \mathrm{d}y,
\end{split}
\label{eq:bN1icN2ic}
\end{equation}
for all $\phi_1$, $\phi_2$ $\in \mathcal{V}_{L-l}$.
Note that right-hand side data is all known since we have already computed $\{\bar{N^i_k}(x_{j},\cdot)\}_{j=1}^{n}$ inductively using finer mesh spaces at macro-grid points in
$(\bigcup_{l'<l}\hiergrid{l'})$. 
We let
\begin{equation}
\label{eq:main25''}
\bar{N^i_k}(x,\cdot)=\bar{N^i_k}^{c}(x,\cdot)+I_{l}^{x}(\bar{N^i_k}), 
\end{equation}
be the FE approximation for $N^i_k(x,\cdot)$ where $k = 1,2$.
A main goal of this paper is to prove that the approximation (\ref{eq:main25''}) for 
$N^i_k (x,\cdot)$ has the same order of accuracy compared to the approximation we obtain by solving
 (\ref{eq:main17}) using the finest FE space $\mathcal{V}_{L}$ at all macroscopic points.
We also prove that we reduce the computation cost with the approximation (\ref{eq:main25''}) to the optimal level.\\ \\
\textit{Remark.} In the following, for simplicity, we use a simple 1-point interpolation to compute the correction term $(\bar{N_1^i}^c,\bar{N_2^i}^c)$. More precisely, for $x \in \hiergrid{l}$ we choose
$x' \in (\bigcup_{l'<l}\hiergrid{l'})$ such that $ \mathrm{dist} (x, x') < O(H2^{-l})$. We let
\begin{equation*}
\label{eq:main26}
\begin{split}
I^x_l(\bar{N}^i_k) = \bar{N}^i_k(x', \cdot), \ \ k = 1,2
\end{split}
\end{equation*}
be the macro-grid interpolation. The FE approximation is
\begin{equation*}
\label{eq:main27}
\begin{split}
 \bar{N}^i_k(x,  \cdot)=\bar{N^i_k}^c(x,  \cdot) +  \bar{N}^i_k(x',  \cdot), \ \ k = 1,2.
\end{split}
\end{equation*}\\ 
\textit{Remark.} Note that as the level $l$ goes higher, we use coarser FE spaces for the corresponding finer macro grids. This balance guarantees that although we use coarser FE spaces, the FE error is still optimal, but with much less computation cost.

\subsection{Error estimates}
We require that the coefficients $\kappa_i$ and $Q$ satisfy Assumption \ref{Lipschitz} and \eqref{eq:coercivity}. We prove that the hierarchical method achieves the same order of accuracy as the full solve. 
For simplicity, we consider 1-point interpolation for our proof; the proof for the general case is similar.
\begin{lemma}
\label{lemma1}
There exists a positive number $C$ such that
$|||(N_1^i(x,\cdot),N_2^i(x,\cdot))|||\leq C$ for all $x\in\Omega$.
\end{lemma}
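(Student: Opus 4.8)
The plan is to derive a standard a priori energy estimate for the cell-problem solution, using the uniform (in $x$) coercivity of $B$ together with the uniform boundedness of the coefficients. The only thing that needs care throughout is that every constant produced must be independent of $x$; this is guaranteed precisely because the coercivity and boundedness of $B$ hold uniformly in $x\in\Omega$ (as already established in the excerpt), and because $\kappa_1,\kappa_2$ are continuous on the compact set $\overline{\Omega}\times Y$ and hence uniformly bounded.

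First I would test the variational identity for $(N_1^i,N_2^i)$ with the solution itself, i.e. take $(\phi_1,\phi_2)=(N_1^i,N_2^i)$ in the equation $B(x;(N_1^i,N_2^i),(\phi_1,\phi_2))=-\int_Y\kappa_1 e^i\cdot\nabla_y\phi_1\,\mathrm{d}y-\int_Y\kappa_2 e^i\cdot\nabla_y\phi_2\,\mathrm{d}y$. The left-hand side becomes $B(x;(N_1^i,N_2^i),(N_1^i,N_2^i))$, which by the uniform coercivity is bounded below by $c_1|||(N_1^i,N_2^i)|||^2$.

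Next I would bound the right-hand side. Since $\kappa_1,\kappa_2$ are uniformly bounded, say by a constant $M$ independent of $x$, and since $|e^i|=1$, $|Y|=1$, an application of the Cauchy–Schwarz inequality controls each term $\int_Y\kappa_k e^i\cdot\nabla_y N_k^i\,\mathrm{d}y$ by $M\|\nabla_y N_k^i\|_{L^2(Y)}$. Summing over $k=1,2$ and recalling the definition of the norm, the whole right-hand side is bounded by $M\,|||(N_1^i,N_2^i)|||$. Combining the two bounds gives $c_1|||(N_1^i,N_2^i)|||^2\le M\,|||(N_1^i,N_2^i)|||$, and dividing by the norm (the estimate being trivial when it vanishes) yields $|||(N_1^i,N_2^i)|||\le M/c_1=:C$, a constant independent of $x$, as claimed.

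There is no genuine obstacle here: the argument is the textbook Lax–Milgram a priori bound, and the uniqueness and existence of $(N_1^i,N_2^i)$ have already been secured by the preceding theorem. The single point deserving explicit attention is the \emph{uniformity} of $C$ in $x$, which I would emphasize rests on two facts supplied earlier, namely that the coercivity constant $c_1$ is uniform in $x$ by hypothesis, and that the coefficients admit a uniform bound $M$ owing to their continuity on the compact closure $\overline{\Omega}\times Y$.
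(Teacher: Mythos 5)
Your proposal is correct and follows essentially the same route as the paper: test the variational formulation with the solution $(N_1^i,N_2^i)$ itself, bound the resulting quadratic form from below by the uniform coercivity constant $c_1$, bound the right-hand side by Cauchy--Schwarz and the uniform bound on $\kappa_1,\kappa_2$, and cancel one power of the norm. Your explicit remark on why the constants are independent of $x$ (uniform coercivity plus boundedness of the coefficients) is exactly the point the paper's proof relies on, stated a bit more carefully.
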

\begin{proof}
From \eqref{eq:main17}, we obtain
\begin{equation*}
\label{eq:main27''}
\begin{split}
&B(x;(N_1^i(x,\cdot),N_2^i(x,\cdot)),(N_1^i(x,\cdot),N_2^i(x,\cdot)))\\
&= - \int_Y \kappa_1(x,y) e^i \cdot \nabla_y N^i_1(x,y)\mathrm{d}y - \int_Y \kappa_2(x,y) e^i \cdot \nabla_y N^i_2(x,y) \mathrm{d}y. 
\end{split}
\end{equation*}
Using the uniform coercivity of the bilinear form $B(x;\cdot,\cdot)$ with respect to $x$, we get
\begin{equation*}
\label{eq:main27'''}
\begin{split}
C|||(N_1^i(x,\cdot),N_2^i(x,\cdot))||| \leq (||\nabla_y N^i_1(x,\cdot)||_{L^2(Y)} + ||\nabla_y N^i_2(x,\cdot)||_{L^2(Y)})
\end{split}
\end{equation*}
for $C > 0$. From this we get the conclusion.  
\end{proof}

Let ${N^i_k}^c(x,\cdot)={N^i_k}(x,\cdot)-{N^i_k}(x',\cdot)$. We have that
$({N^i_1}^c(x,\cdot), {N^i_2}^c(x,\cdot))\in W$ satisfies 
\begin{eqnarray}
&&B(x;({N^i_1}^c,{N^i_2}^c),(\phi_1,\phi_2))\nonumber\\
&&=-\int_Y (\kappa_1(x,y) -  \kappa_1(x',y))\nabla_y {N^i_1}(x',y) \cdot \nabla_y \phi_1(y) \mathrm{d}y-\int_Y (\kappa_1(x,y) - \kappa_1(x',y)) e^i \cdot \nabla_y \phi_1(y) \mathrm{d}y\nonumber\\
&&-\int_Y (\kappa_2(x,y) -  \kappa_2(x',y))\nabla_y {N^i_2}(x',y) \cdot \nabla_y \phi_2(y)\mathrm{d}y-\int_Y (\kappa_2(x,y) - \kappa_2(x',y)) e^i \cdot \nabla_y \phi_2(y)\mathrm{d}y\nonumber\\
&&\qquad\qquad+\int_Y (Q(x',y)-Q(x,y)) ({N^i_1}(x',y)- {N^i_2}(x',y))(\phi_1(y)- \phi_2(y)) \mathrm{d}y
\label{eq:N1icN2ic}
\end{eqnarray}
$\forall\,(\phi_1,\phi_2)\in W$. 
\begin{proposition}
\label{prop:1}
There exists $C > 0$ such that 
\[
|||({N^i_1}^c(x,\cdot),{N^i_2}^c(x,\cdot))||| \leq C|x - x'|
\]
for $x\in {\mathcal T}_L$.
\end{proposition}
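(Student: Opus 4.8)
The goal is to bound the norm $|||({N_1^i}^c, {N_2^i}^c)|||$ by $C|x-x'|$, where the pair $({N_1^i}^c, {N_2^i}^c)$ solves the variational problem \eqref{eq:N1icN2ic}. The natural strategy is to test \eqref{eq:N1icN2ic} with $(\phi_1,\phi_2) = ({N_1^i}^c, {N_2^i}^c)$ itself, so that the left-hand side becomes $B(x;({N_1^i}^c,{N_2^i}^c),({N_1^i}^c,{N_2^i}^c))$, which by the uniform coercivity established earlier is bounded below by $c_1 |||({N_1^i}^c,{N_2^i}^c)|||^2$. It then remains to bound the right-hand side by $C|x-x'| \cdot |||({N_1^i}^c,{N_2^i}^c)|||$, after which dividing through by $|||({N_1^i}^c,{N_2^i}^c)|||$ yields the claim.

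First I would estimate each of the five terms on the right-hand side of \eqref{eq:N1icN2ic}. The key ingredient is Assumption \ref{Lipschitz}, which gives $\|\kappa_k(x,\cdot)-\kappa_k(x',\cdot)\|_{L^\infty(Y)} \le C|x-x'|$ and the analogous bound for $Q$. For the first and third terms, applying Cauchy--Schwarz together with the $L^\infty$ bound on $\kappa_k(x,\cdot)-\kappa_k(x',\cdot)$ produces a factor $C|x-x'|\,\|\nabla_y N_k^i(x',\cdot)\|_{L^2(Y)}\,\|\nabla_y\phi_k\|_{L^2(Y)}$; the second and fourth terms are handled similarly but with $\|e^i\|_{L^2(Y)}$ (a constant) in place of $\|\nabla_y N_k^i(x',\cdot)\|_{L^2(Y)}$. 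For the fifth term, the $L^\infty$ bound on $Q(x,\cdot)-Q(x',\cdot)$ combined with Cauchy--Schwarz gives $C|x-x'|\,\|N_1^i(x',\cdot)-N_2^i(x',\cdot)\|_{L^2(Y)}\,\|\phi_1-\phi_2\|_{L^2(Y)}$. Each of the three quantities $\|\nabla_y N_k^i(x',\cdot)\|_{L^2(Y)}$ and $\|N_1^i(x',\cdot)-N_2^i(x',\cdot)\|_{L^2(Y)}$ is dominated by $|||(N_1^i(x',\cdot),N_2^i(x',\cdot))|||$, which is uniformly bounded by a constant thanks to Lemma \ref{lemma1}. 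Collecting these bounds shows the entire right-hand side is at most $C|x-x'|\,|||(\phi_1,\phi_2)|||$.

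Setting $(\phi_1,\phi_2)=({N_1^i}^c,{N_2^i}^c)$ then gives
\begin{equation*}
c_1 |||({N_1^i}^c,{N_2^i}^c)|||^2 \le C|x-x'|\,|||({N_1^i}^c,{N_2^i}^c)|||,
\end{equation*}
and dividing by $|||({N_1^i}^c,{N_2^i}^c)|||$ (the case where it vanishes being trivial) completes the argument.

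I do not expect a serious obstacle here: the result is essentially a stability-plus-Lipschitz estimate and every needed tool (coercivity, boundedness, Assumption \ref{Lipschitz}, and the uniform bound from Lemma \ref{lemma1}) is already in place. The only point requiring a little care is bookkeeping in the fifth term, to recognize that the $\|\phi_1-\phi_2\|_{L^2(Y)}$ factor it produces is precisely one of the three pieces comprising $|||(\phi_1,\phi_2)|||$, so that the coupling term integrates cleanly into the triple-norm estimate rather than requiring a separate Poincaré-type argument.
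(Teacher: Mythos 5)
Your proposal is correct and follows essentially the same route as the paper's proof: test \eqref{eq:N1icN2ic} with $({N_1^i}^c,{N_2^i}^c)$ itself, use the uniform coercivity of $B$ on the left, and bound the right-hand side by $C|x-x'|\,|||({N_1^i}^c,{N_2^i}^c)|||$ via Assumption \ref{Lipschitz}, Cauchy--Schwarz, and the uniform bound from Lemma \ref{lemma1}. Your closing observation that the $\|\phi_1-\phi_2\|_{L^2(Y)}$ factor from the coupling term is exactly one of the three pieces of the triple norm is precisely the bookkeeping the paper relies on.
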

\begin{proof}
From \eqref{eq:N1icN2ic}, for $(\phi_1,\phi_2)=({N_1^i}^c(x,\cdot),{N_2^i}^c(x,\cdot))\in W$ we have 
\begin{equation*}
\label{eq:main30} 
\begin{split}
&B(x;({N_1^i}^c,{N_2^i}^c),({N_1^i}^c,{N_2^i}^c))\\
&=  
-\int_Y (\kappa_1(x,y) - \kappa_1(x',y))\nabla_y N^i_1(x',y) \cdot \nabla_y {N^{i}_1}^c(x,y)\mathrm{d}y 
-\int_Y (\kappa_1(x,y) - \kappa_1(x',y) e^i \cdot \nabla_y {N^{i}_1}^c(x,y)\mathrm{d}y 
\\&-\int_Y (\kappa_2(x,y) - \kappa_2(x',y))\nabla_y N^i_2(x',y) \cdot \nabla_y {N^{i}_2}^c(x,y)\mathrm{d}y -\int_Y (\kappa_2(x,y) - \kappa_2(x',y)) e^i \cdot \nabla_y {N^{i}_2}^c(x,y)\mathrm{d}y \\
&+\int_Y (Q(x',y)-Q(x,y))(N^i_1(x',y)-N^i_2(x',y))({N^{i}_1}^c(x,y)-{N_2^i}^c(x,y))\mathrm{d}y .
\end{split}
\end{equation*}
As $\nabla_y N^i_1(x',\cdot)$ and $\nabla_y N^i_2(x',\cdot)$ are uniformly bounded in $L^2(Y)$ with respect to $x\in\Omega$ by Lemma \ref{lemma1}.
From Assumption \ref{Lipschitz}
  we have
\begin{equation*}
\label{eq:main31} 
\begin{split}
&|||({N_1^i}^c(x,\cdot),{N_2^i}^c(x,\cdot))|||^2\\
&\qquad\qquad\leq C|x-x'|(||\nabla_y {N^i_1}^c(x,\cdot)||_{L^2(Y)} + ||\nabla_y {N^i_2}^c(x,\cdot)||_{L^2(Y)} + ||{N^i_2}^c(x,\cdot) - {N^i_1}^c(x,\cdot)||_{L^2(Y)}).
\end{split}
\end{equation*}
Thus
\begin{equation}
\label{eq:main32} 
\begin{split}
|||({N_1^i}^c(x,\cdot),{N_2^i}^c(x,\cdot))|||\leq C|x - x'|
\end{split}
\end{equation}
where the constant $C$ is independent of $x$.
\end{proof}
\begin{lemma}
\label{lemma2}
There is a positive constant $C$ such that
$||\Delta_y N^i_1(x,\cdot)||_{L^2(Y)}+ ||\Delta_y N^i_2(x,\cdot)||_{L^2(Y)}  \le C$ for all $x\in\Omega$.
\end{lemma}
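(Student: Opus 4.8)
The plan is to establish interior $H^2$-regularity for each cell solution, uniformly in $x$, by decoupling the system: I treat the zeroth-order interaction terms as $L^2$ source data controlled by Lemma~\ref{lemma1}, so that each equation in (\ref{eq:cell}) becomes a scalar periodic elliptic equation to which standard regularity theory applies.

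First I would record that the interaction terms are uniformly bounded in $L^2(Y)$. By Lemma~\ref{lemma1} the quantity $\|N_1^i(x,\cdot)-N_2^i(x,\cdot)\|_{L^2(Y)}$ is bounded uniformly in $x$, and $Q$ is bounded on the compact set $\Omega\times\overline{Y}$ by continuity; hence $Q\,(N_2^i-N_1^i)$ and $Q\,(N_1^i-N_2^i)$ are uniformly bounded in $L^2(Y)$. Rewriting the first equation of (\ref{eq:cell}) in the form
\[
-\div_y(\kappa_1\nabla_y N_1^i)=\div_y(\kappa_1 e^i)+Q\,(N_2^i-N_1^i)=:f_1^i ,
\]
and the second analogously with $f_2^i=\div_y(\kappa_2 e^i)+Q\,(N_1^i-N_2^i)$, I observe that if $\kappa_k$ is Lipschitz in $y$ (so that $\div_y(\kappa_k e^i)=\partial_{y_i}\kappa_k\in L^\infty(Y)$) then $f_k^i$ is bounded in $L^2(Y)$ uniformly in $x$.

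Next I would invoke $H^2$-regularity on the torus $Y$. Since the equations are periodic there is no boundary, so the difference-quotient method (testing the weak form with the discrete second difference $-D_h^{-s}D_h^{s}N_k^i$ and using the Lipschitz continuity of $\kappa_k$ in $y$ to absorb the commutator) yields $N_k^i(x,\cdot)\in H^2_\#(Y)$ together with $\|N_k^i(x,\cdot)\|_{H^2(Y)}\le C(\|f_k^i\|_{L^2(Y)}+\|N_k^i(x,\cdot)\|_{H^1(Y)})$, the constant $C$ depending only on the ellipticity constant $c$ and the $W^{1,\infty}$-in-$y$ bound of $\kappa_k$, hence uniform in $x$. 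Once membership in $H^2_\#(Y)$ is secured, I expand $\div_y(\kappa_k\nabla_y N_k^i)=\nabla_y\kappa_k\cdot\nabla_y N_k^i+\kappa_k\,\Delta_y N_k^i$ and solve for the Laplacian, giving the pointwise identity $\kappa_k\,\Delta_y N_k^i=-f_k^i-\nabla_y\kappa_k\cdot\nabla_y N_k^i$. Using $\kappa_k\ge c$ from (\ref{eq:coercivity}) I then estimate
\[
\|\Delta_y N_k^i(x,\cdot)\|_{L^2(Y)}\le \frac{1}{c}\Bigl(\|f_k^i\|_{L^2(Y)}+\|\nabla_y\kappa_k\|_{L^\infty(Y)}\,\|\nabla_y N_k^i(x,\cdot)\|_{L^2(Y)}\Bigr),
\]
where $\|\nabla_y N_k^i(x,\cdot)\|_{L^2(Y)}$ is uniformly bounded by Lemma~\ref{lemma1} and $\|f_k^i\|_{L^2(Y)}$ by the first step; summing over $k=1,2$ gives the claim.

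The main obstacle is the $H^2$-regularity step rather than the final arithmetic: the bound $\|\Delta_y N_k^i\|_{L^2(Y)}\le C$ only makes sense after one knows $N_k^i(x,\cdot)\in H^2_\#(Y)$, and both that regularity and the expansion of $\div_y(\kappa_k\nabla_y N_k^i)$ require $\kappa_k$ to be Lipschitz (or $W^{1,\infty}$) in $y$. The standing hypotheses state only continuity of $\kappa_k$ in $(x,y)$ and Lipschitz continuity in $x$ (Assumption~\ref{Lipschitz}), so this argument tacitly needs the additional smoothness $\kappa_k(x,\cdot)\in W^{1,\infty}(Y)$ with a bound uniform in $x$; I would make this regularity assumption explicit. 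Granting it, the uniformity of all constants in $x$ follows because every quantity entering the estimate — the ellipticity constant $c$, the bound on $\|\nabla_y\kappa_k\|_{L^\infty(Y)}$, the $L^\infty$ bound on $Q$, and the Lemma~\ref{lemma1} bound on $|||(N_1^i,N_2^i)|||$ — is independent of $x$.
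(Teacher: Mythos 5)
Your proof follows essentially the same route as the paper's: expand $\div_y(\kappa_k\nabla_y N_k^i)=\kappa_k\Delta_y N_k^i+\nabla_y\kappa_k\cdot\nabla_y N_k^i$, solve pointwise for $\Delta_y N_k^i$, and bound the right-hand side in $L^2(Y)$ using the uniform bound from Lemma \ref{lemma1} together with the boundedness of $Q$ and the ellipticity $\kappa_k\ge c$. The paper carries out this rearrangement formally, without justifying that $N_k^i(x,\cdot)\in H^2_\#(Y)$ and without stating the $W^{1,\infty}$-in-$y$ regularity of $\kappa_k$ that the expansion requires; your difference-quotient regularity step and your explicit flagging of that tacit hypothesis add rigor the paper omits, but the substance of the argument is the same.
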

\begin{proof}
We rewrite cell problem (\ref{eq:cell}) as 
\begin{equation*}
\label{eq:main32''} 
\begin{split}
\kappa_1\Delta_y N^i_1 + \nabla_y \kappa_1\cdot \nabla_y N^i_1+ \div_y (\kappa_1 e^i) + Q(x,y)(N^i_2-N^i_1) = 0
\end{split}
\end{equation*}
\begin{equation*}
\label{eq:main32'''} 
\begin{split}
\kappa_2\Delta_y N^i_2 + \nabla_y \kappa_2\cdot \nabla_y N^i_2+ \div_y (\kappa_2 e^i) + Q(x,y)(N^i_1-N^i_2) = 0.
\end{split}
\end{equation*}
Rearranging these equations, we have,
\begin{equation*}
\label{eq:main32*} 
\begin{split}
\Delta_y N^i_1 = -\frac{1}{\kappa_1} (\nabla_y \kappa_1\cdot \nabla_y N^i_1+ \div_y (\kappa_1 e^i) + Q(x,y)(N^i_2-N^i_1))
\end{split}
\end{equation*}
\begin{equation*}
\label{eq:main32**} 
\begin{split}
\Delta_y N^i_2 = -\frac{1}{\kappa_2} (\nabla_y \kappa_2\cdot \nabla_y N^i_2+ \div_y (\kappa_2 e^i) + Q(x,y)(N^i_1-N^i_2)).
\end{split}
\end{equation*}
By the uniform boundedness of $|||(N_1^i(x,\cdot),N_2^i(x,\cdot))|||$ with respect to $x$ and Lemma \ref{lemma1}, we deduce that $||\Delta_y N^i_1(x,\cdot)||_{L^2(Y)}$ and $||\Delta_y N^i_2(x,\cdot)||_{L^2(Y)}$ are uniformly bounded for all $x\in\Omega$.
\end{proof}

\begin{lemma}
\label{prop:2}
There exists a positive constant $C$ such that
\begin{equation*}
\label{eq:main32***}
||\Delta_y {N^i_1}^c(x,\cdot)||_{L^2(Y)}\le C|x-x'|,\ \ ||\Delta_y {N^i_2}^c(x,\cdot)||_{L^2(Y)} < C|x-x'| 
\end{equation*}
for all $x\in{\mathcal T}_L$.
\end{lemma}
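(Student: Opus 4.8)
The plan is to follow the template of Lemma~\ref{lemma2}: recover the pointwise (strong) equations satisfied by the correction pair $({N_1^i}^c,{N_2^i}^c)$, solve algebraically for the Laplacians, and then bound each resulting term in $L^2(Y)$ by $C|x-x'|$. Starting from the variational identity \eqref{eq:N1icN2ic}, I would choose test functions of the form $(\phi_1,0)$ and $(0,\phi_2)$ with $\phi_1,\phi_2\in H^1_\#(Y)$ arbitrary; these are admissible representatives in $W$ and decouple the two components of the system. Integrating by parts over the periodic cell (no boundary terms) and reading off the strong form gives, for the first continuum,
\[
-\div_y(\kappa_1(x,\cdot)\nabla_y {N_1^i}^c) + Q(x,\cdot)({N_1^i}^c-{N_2^i}^c) = \div_y\big((\kappa_1(x,\cdot)-\kappa_1(x',\cdot))(\nabla_y N_1^i(x',\cdot)+e^i)\big) + (Q(x',\cdot)-Q(x,\cdot))(N_1^i(x',\cdot)-N_2^i(x',\cdot)),
\]
together with the analogous equation for ${N_2^i}^c$.

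Next I would expand $\div_y(\kappa_1\nabla_y{N_1^i}^c)=\kappa_1\Delta_y{N_1^i}^c+\nabla_y\kappa_1\cdot\nabla_y{N_1^i}^c$ and, since $\kappa_1\ge c>0$ by \eqref{eq:coercivity}, divide by $\kappa_1$ to obtain an explicit expression for $\Delta_y{N_1^i}^c$. Three of the resulting terms are controlled immediately: the term $\tfrac1{\kappa_1}\nabla_y\kappa_1\cdot\nabla_y{N_1^i}^c$ and the term $\tfrac1{\kappa_1}Q({N_1^i}^c-{N_2^i}^c)$ are each $O(|x-x'|)$ in $L^2(Y)$ because both $\|\nabla_y{N_1^i}^c\|_{L^2(Y)}$ and $\|{N_1^i}^c-{N_2^i}^c\|_{L^2(Y)}$ are bounded by $|||({N_1^i}^c,{N_2^i}^c)|||\le C|x-x'|$ from Proposition~\ref{prop:1}; and the $Q$-difference term is bounded by $\|Q(x,\cdot)-Q(x',\cdot)\|_{L^\infty(Y)}\le C|x-x'|$ (Assumption~\ref{Lipschitz}) times $\|N_1^i(x',\cdot)-N_2^i(x',\cdot)\|_{L^2(Y)}\le C$ (Lemma~\ref{lemma1}). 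The remaining divergence term I would expand by the product rule into $(\kappa_1(x,\cdot)-\kappa_1(x',\cdot))\Delta_y N_1^i(x',\cdot)$ plus $\nabla_y(\kappa_1(x,\cdot)-\kappa_1(x',\cdot))\cdot(\nabla_y N_1^i(x',\cdot)+e^i)$.

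The main obstacle is this last divergence term. Its first piece is fine: the $L^\infty$ Lipschitz bound on $\kappa_1$ (Assumption~\ref{Lipschitz}) combines with $\|\Delta_y N_1^i(x',\cdot)\|_{L^2(Y)}\le C$ from Lemma~\ref{lemma2} to give an $O(|x-x'|)$ bound. The second piece, $\nabla_y(\kappa_1(x,\cdot)-\kappa_1(x',\cdot))\cdot(\nabla_y N_1^i(x',\cdot)+e^i)$, \emph{cannot} be handled by Assumption~\ref{Lipschitz} as stated, since that hypothesis only controls the $L^\infty$ norm of the coefficient difference and not that of its $y$-gradient. Here I would invoke the paper's standing hypothesis that the coefficients are sufficiently smooth in $x$ (the same regularity that already lets $\nabla_y\kappa_i$ appear in Lemma~\ref{lemma2}), in the sharpened form $\|\nabla_y\kappa_i(x,\cdot)-\nabla_y\kappa_i(x',\cdot)\|_{L^\infty(Y)}\le C|x-x'|$; together with $\|\nabla_y N_1^i(x',\cdot)+e^i\|_{L^2(Y)}\le C$ from Lemma~\ref{lemma1}, this piece is also $O(|x-x'|)$ in $L^2(Y)$. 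Summing the four contributions yields $\|\Delta_y{N_1^i}^c(x,\cdot)\|_{L^2(Y)}\le C|x-x'|$, and the estimate for ${N_2^i}^c$ follows identically by the symmetry of the system.
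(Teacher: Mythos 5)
Your proof is correct and follows essentially the same route as the paper's: pass from \eqref{eq:N1icN2ic} to the strong form of the equations for $({N_1^i}^c,{N_2^i}^c)$, solve algebraically for $\Delta_y{N_1^i}^c$ and $\Delta_y{N_2^i}^c$ after dividing by $\kappa_1,\kappa_2\ge c>0$, and bound each resulting term in $L^2(Y)$ via Lemma \ref{lemma1}, Proposition \ref{prop:1}, Lemma \ref{lemma2} and Assumption \ref{Lipschitz}. Your explicit observation that the term $\nabla_y\bigl(\kappa_1(x,\cdot)-\kappa_1(x',\cdot)\bigr)\cdot\bigl(\nabla_y N_1^i(x',\cdot)+e^i\bigr)$ is \emph{not} controlled by Assumption \ref{Lipschitz} as stated, and requires the sharpened hypothesis $\|\nabla_y\kappa_i(x,\cdot)-\nabla_y\kappa_i(x',\cdot)\|_{L^\infty(Y)}\le C|x-x'|$, is a point the paper's own proof passes over in silence (it cites only Lemma \ref{lemma1} and Proposition \ref{prop:1}, which do not bound that term), so your version makes explicit a regularity assumption the paper uses only implicitly under its ``sufficiently smooth'' standing assumption.
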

\begin{proof}
From \eqref{eq:N1icN2ic}, we have
\begin{equation*}
\label{eq:main33}
\begin{split}
\kappa_1(x,y) \Delta_y {N^{i}_1}^c(x,y) &+\nabla_y \kappa_1(x,y)\cdot \nabla_y {N^{i}_1}^c(x,y) = - Q(x,y)({N^{i}_2}^c(x,y) - {N^{i}_1}^c(x,y)) \\  & 
-\nabla_y(\kappa_1(x,y) - \kappa_1(x',y))\cdot\nabla_y N^i_1(x',y) - (\kappa_1(x,y) - \kappa_1(x',y))\Delta_y N^i_1(x',y)
\\&-\div_y(\kappa_1(x,y) - \kappa_1(x',y) e^i)
+(Q(x',y)-Q(x,y))(N^i_2(x',y)-N^i_1(x',y)),
\end{split}
\end{equation*}
\begin{equation*}
\label{eq:main33'}
\begin{split}
\kappa_2(x,y) \Delta_y {N^{i}_2}^c(x,y) &+\nabla_y \kappa_2(x,y) \cdot\nabla_y {N^{i}_2}^c(x,y) = - Q(x,y)({N^{i}_1}^c(x,y) - {N^{i}_2}^c(x,y)) \\  & 
-\nabla_y(\kappa_2(x,y) - \kappa_2(x',y))\cdot\nabla_y N^i_2(x',y) - (\kappa_2(x,y) - \kappa_2(x',y))\Delta_y N^i_2(x',y)
\\&-\div_y(\kappa_2(x,y) - \kappa_2(x',y) e^i)
+(Q(x',y)-Q(x,y))(N^i_1(x',y)-N^i_2(x',y)).
\end{split}
\end{equation*}
Therefore,
\begin{equation*}
\label{eq:main33''}
\begin{split}
\Delta_y {N^{i}_1}^c(x,y) &= \frac{1}{\kappa_1}\{-\nabla_y \kappa_1(x,y)\cdot \nabla_y {N^{i}_1}^c(x,y)  - Q(x,y)({N^{i}_2}^c(x,y) - {N^{i}_1}^c(x,y)) \\  & 
-\nabla_y(\kappa_1(x,y) - \kappa_1(x',y))\cdot\nabla_y N^i_1(x',y) - (\kappa_1(x,y) - \kappa_1(x',y))\Delta_y N^i_1(x',y)
\\&-\div_y(\kappa_1(x,y) - \kappa_1(x',y) e^i)
+(Q(x',y)-Q(x,y))(N^i_2(x',y)-N^i_1(x',y))\},
\end{split}
\end{equation*}
\begin{equation*}
\label{eq:main33'''}
\begin{split}
\Delta_y {N^{i}_2}^c(x,y) &= \frac{1}{\kappa_2}\{- \nabla_y \kappa_2(x,y)\cdot \nabla_y {N^{i}_2}^c(x,y)   - Q(x,y)({N^{i}_1}^c(x,y) - {N^{i}_2}^c(x,y)) \\  & 
-\nabla_y(\kappa_2(x,y) - \kappa_2(x',y))\cdot\nabla_y N^i_2(x',y) - (\kappa_2(x,y) - \kappa_2(x',y))\Delta_y N^i_2(x',y)
\\&-\div_y(\kappa_2(x,y) - \kappa_2(x',y) e^i)
+(Q(x',y)-Q(x,y))(N^i_1(x',y)-N^i_2(x',y))\}.
\end{split}
\end{equation*}
From Lemma \ref{lemma1} and Proposition \ref{prop:1}, we have
\begin{equation*}
\label{eq:main33*}
\begin{split}
||\Delta_y {N^i_1}^c(x,\cdot)||_{L^2(Y)}, ||\Delta_y {N^i_2}^c(x,\cdot)||_{L^2(Y)} < C|x-x'|. \\
\end{split}
\end{equation*}
for some constant $C > 0$.
\end{proof}
We choose $({N_1^i}^c,{N_2^i}^c)\in W$ such that
\[
\int_Y({N_1^i}^c+{N_2^i}^c)dy=0.
\]
We then have
\begin{lemma}
\label{lemma3} 
There is a positive constant $C$ such that 
$||{N^i_1}^c(x,\cdot)||_{L^2(Y)}\le C|x-x'|$ and  $||{N^i_2}^c(x,\cdot)||_{L^2(Y)}\le C|x-x'|$ for all $x\in{\mathcal T}_L$.
\end{lemma}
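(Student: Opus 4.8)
The plan is to exploit that the triple-norm bound from Proposition \ref{prop:1} already controls the two gradients \emph{and} the difference ${N_1^i}^c - {N_2^i}^c$ in $L^2(Y)$; the only thing missing is control of the \emph{average} of each component, which is exactly where the gauge condition $\int_Y({N_1^i}^c + {N_2^i}^c)\,\dy = 0$ enters. I would write $m_k = \int_Y {N_k^i}^c(x,\cdot)\,\dy$ for $k=1,2$. Since $|Y|=1$, the gauge condition reads $m_1 + m_2 = 0$, i.e. $m_2 = -m_1$.

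First I would apply the periodic Poincar\'e--Wirtinger inequality to each centred function ${N_k^i}^c - m_k$, which has zero mean over $Y$, to obtain
\[
\|{N_k^i}^c(x,\cdot) - m_k\|_{L^2(Y)} \le C_P \|\nabla_y {N_k^i}^c(x,\cdot)\|_{L^2(Y)}, \qquad k=1,2.
\]
The right-hand sides are bounded by $C|x-x'|$ because each gradient norm is one of the three nonnegative terms constituting $|||({N_1^i}^c,{N_2^i}^c)|||$, which is $\le C|x-x'|$ by Proposition \ref{prop:1}.

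Next I would recover the averages. Since $m_1 - m_2$ is a constant and $|Y|=1$, the triangle inequality gives
\[
|m_1 - m_2| = \|m_1 - m_2\|_{L^2(Y)} \le \|{N_1^i}^c - {N_2^i}^c\|_{L^2(Y)} + \|{N_1^i}^c - m_1\|_{L^2(Y)} + \|{N_2^i}^c - m_2\|_{L^2(Y)}.
\]
The first term on the right is again bounded by $|||({N_1^i}^c,{N_2^i}^c)||| \le C|x-x'|$ (it is the third constituent of the triple norm), and the other two were just bounded. With the gauge relation $m_2 = -m_1$ this reads $2|m_1| \le C|x-x'|$, whence $|m_1| = |m_2| \le C|x-x'|$. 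Finally $\|{N_k^i}^c\|_{L^2(Y)} \le \|{N_k^i}^c - m_k\|_{L^2(Y)} + |m_k| \le C|x-x'|$ for $k=1,2$, which is the claim.

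The main obstacle---and the reason the gauge condition was imposed immediately before the statement---is that the solution space $W$ quotients only by the \emph{diagonal} constants $(c,c)$, so one cannot normalise each component to have zero mean independently; a na\"ive application of Poincar\'e to each component would leave an uncontrolled additive constant. The one genuine subtlety is therefore the third step: pinning down the common average $m_1 = -m_2$ using the $L^2$ bound on the \emph{difference} ${N_1^i}^c - {N_2^i}^c$ supplied by the triple norm, rather than any bound on the components themselves. Note that the second-derivative estimates of Lemma \ref{prop:2} are not needed for this particular statement.
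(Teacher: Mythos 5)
Your proof is correct. It uses the same three ingredients as the paper --- the gauge condition $\int_Y({N_1^i}^c+{N_2^i}^c)\,\dy=0$, the periodic Poincar\'e inequality, and the triple-norm bound of Proposition \ref{prop:1} --- but assembles them differently. The paper's proof is based on the parallelogram identity
\[
2\bigl(\|{N_1^i}^c\|_{L^2(Y)}^2+\|{N_2^i}^c\|_{L^2(Y)}^2\bigr)=\|{N_1^i}^c+{N_2^i}^c\|_{L^2(Y)}^2+\|{N_1^i}^c-{N_2^i}^c\|_{L^2(Y)}^2,
\]
bounding the first term on the right by applying Poincar\'e directly to the zero-mean \emph{sum} and reading off the second term from the triple norm; this makes the argument three lines long. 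You instead apply Poincar\'e--Wirtinger to each component centred at its own mean and then pin down the means $m_1=-m_2$ through the triangle inequality and the $L^2$ bound on the difference. Your route is slightly longer but isolates cleanly what each hypothesis delivers: the gradient bounds control the oscillation of each component, while the difference bound plus the gauge condition control the two averages. The paper's route is more compact but hides this accounting inside the algebraic identity. Your closing observations --- that the quotient in $W$ is only by diagonal constants, so componentwise normalisation is unavailable, and that Lemma \ref{prop:2} is not needed here --- both match the paper, whose proof likewise uses only Proposition \ref{prop:1} and the gauge condition.
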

\begin{proof}
We note that
\begin{equation}
\label{eq:main33**}
\begin{split}
2(||{N^i_1}^c||^2_{L^2(Y)} + ||{N^i_2}^c||^2_{L^2(Y)}) = ||{N^i_1}^c+{N^i_2}^c||^2_{L^2(Y)}+||{N^i_1}^c-{N^i_2}^c||^2_{L^2(Y)}.
\end{split}
\end{equation}
Since $\int_Y ({N^i_1}^c+ {N^i_2}^c) \dy = 0$, by Poincare inequality, and $(\ref{eq:main32})$, the following inequalities hold.
\begin{equation*}
\label{eq:main33***}
\begin{split}
||{N^i_1}^c+{N^i_2}^c||_{L^2(Y)} \leq C||\nabla_y ({N^i_1}^c+{N^i_2}^c)||_{L^2(Y)} \leq C (||\nabla_y{N^i_1}^c||_{L^2(Y)}+||\nabla_y{N^i_2}^c||_{L^2(Y)})  \leq C|x-x'|
\end{split}
\end{equation*}
And then by (\ref{eq:main33**}),
\begin{equation*}
\label{eq:main33****}
\begin{split}
2(||{N^i_1}^c||^2_{L^2(Y)} + ||{N^i_2}^c||^2_{L^2(Y)}) \leq C |x-x'|^2.
\end{split}
\end{equation*}
\end{proof}
\begin{proposition}
\label{prop:3}
There is a constant $C>0$ such that
$||{N^{i}_1}^c||_{H^2(Y)}\le C|x-x'|$ and  $||{N^{i}_2}^c||_{H^2(Y)} \leq C |x-x'|$ for all $x\in{\mathcal T}_L$.
\end{proposition}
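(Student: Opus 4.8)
The plan is to bound each constituent of the $H^2(Y)$ norm of ${N^i_1}^c(x,\cdot)$ and ${N^i_2}^c(x,\cdot)$ separately, showing each is of order $|x-x'|$. Recall that
\[
\|w\|_{H^2(Y)}^2=\|w\|_{L^2(Y)}^2+\|\nabla_y w\|_{L^2(Y)}^2+\|D^2_y w\|_{L^2(Y)}^2,
\]
where $D^2_y w$ denotes the full Hessian. For $w={N^i_k}^c(x,\cdot)$ the $L^2$ contribution is controlled directly by Lemma \ref{lemma3}, and the gradient contribution by Proposition \ref{prop:1}, since the norm $|||\cdot|||$ dominates $\|\nabla_y {N^i_1}^c(x,\cdot)\|_{L^2(Y)}$ and $\|\nabla_y {N^i_2}^c(x,\cdot)\|_{L^2(Y)}$; both are already $O(|x-x'|)$.

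The only remaining, and central, piece is the Hessian term, for which I would exploit that for $Y$-periodic functions the full second-order seminorm is controlled by the Laplacian alone. Concretely, for $w\in H^2_\#(Y)$, integrating by parts twice with no boundary contributions (by periodicity of $Y$) gives the identity
\[
\sum_{j,k}\int_Y(\partial_{y_jy_k}w)^2\,\dy=\int_Y(\Delta_y w)^2\,\dy,
\]
that is $\|D^2_y w\|_{L^2(Y)}=\|\Delta_y w\|_{L^2(Y)}$. Applying this with $w={N^i_1}^c(x,\cdot)$ and $w={N^i_2}^c(x,\cdot)$, and invoking the bounds $\|\Delta_y {N^i_k}^c(x,\cdot)\|_{L^2(Y)}\le C|x-x'|$ established in Lemma \ref{prop:2}, yields $\|D^2_y {N^i_k}^c(x,\cdot)\|_{L^2(Y)}\le C|x-x'|$. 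Summing the three contributions then gives $\|{N^i_k}^c(x,\cdot)\|_{H^2(Y)}\le C|x-x'|$ for $k=1,2$, with $C$ independent of $x$, which is exactly the claim.

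The step requiring the most care is the passage from control of the Laplacian to control of the full Hessian. This rests on two points: first, that ${N^i_k}^c(x,\cdot)$ genuinely lies in $H^2_\#(Y)$, which follows from periodic elliptic regularity once we know ${N^i_k}^c(x,\cdot)\in H^1_\#(Y)$ (Proposition \ref{prop:1}) together with $\Delta_y {N^i_k}^c(x,\cdot)\in L^2(Y)$ (Lemma \ref{prop:2}), so that the integration by parts in the displayed identity is legitimate; and second, the periodic integration-by-parts identity itself, where the periodicity of the cell $Y$ is essential, since otherwise boundary terms would destroy the equality between the Hessian and Laplacian norms. Everything else is a direct assembly of the previously established estimates, so I expect no further obstacles.
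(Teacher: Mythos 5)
Your proof is correct, but it converts Laplacian control into full $H^2$ control by a genuinely different mechanism than the paper. You split $\|{N^i_k}^c(x,\cdot)\|_{H^2(Y)}$ into the $L^2$ part (Lemma \ref{lemma3}), the gradient part (Proposition \ref{prop:1}), and the Hessian part, and for the last you invoke the periodic integration-by-parts identity
\[
\sum_{j,k}\int_Y(\partial_{y_jy_k}w)^2\,\dy=\int_Y(\Delta_y w)^2\,\dy,\qquad w\in H^2_\#(Y),
\]
combined with Lemma \ref{prop:2}. The paper uses exactly the same three prior estimates but handles the Hessian differently: it takes an auxiliary domain $\omega\supset Y$, a cutoff $\phi\in C_0^\infty(\omega)$ with $\phi\equiv 1$ on $Y$, applies Dirichlet elliptic regularity to $\phi\,{N^i_k}^c$ on $\omega$, and then controls the $L^2(\omega)$ norms of ${N^i_k}^c$, $\nabla_y{N^i_k}^c$ and $\Delta_y{N^i_k}^c$ through the $Y$-periodicity of the corrector. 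Your route buys simplicity and sharpness: it gives an exact equality of seminorms rather than an estimate, and it needs no auxiliary domain, no cutoff, no elliptic-regularity constant, and no covering argument; its only extra prerequisite is the membership ${N^i_k}^c(x,\cdot)\in H^2_\#(Y)$, which, as you correctly note, follows from ${N^i_k}^c(x,\cdot)\in H^1_\#(Y)$ together with $\Delta_y{N^i_k}^c(x,\cdot)\in L^2(Y)$ (for instance by Fourier series on the torus --- essentially the same computation that proves your identity, so the two steps could be merged). What the paper's cutoff argument buys in exchange is robustness: it is the standard interior-regularity template and would survive in settings where the corrector is not periodic, whereas your identity relies essentially on the cancellation of boundary terms across opposite faces of $Y$.
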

\begin{proof}
Let $\omega\subset\mathbb{R}^d$ be a domain such that $Y\subset\omega$. Let $\phi \in {\cal C}^\infty_0(\omega)$ be such that $\phi = 1$ in $Y$. We have
\begin{equation*}
\label{eq:main34}
\begin{split}
\Delta_y(\phi {N^i_1}^c) = \Delta_y \phi {N^i_1}^c + 2 \nabla\phi\cdot \nabla {N^i_1}^c + \phi \Delta_y {N^i_1}^c.
\end{split}
\end{equation*}
Since $\phi {N^i_1}^c = 0$ on $\partial \omega$, applying elliptic regularity, we have
\begin{equation}
\label{eq:main35}
\begin{split}
||{N^i_1}^c||_{H^2(Y)} \leq ||\phi {N^i_1}^c||_{H^2(\omega)} \leq  ||\Delta_y \phi {N^i_1}^c + 2 \nabla_y\phi\cdot \nabla_y {N^i_1}^c + \phi \Delta_y {N^i_1}^c||_{L^2(\omega)}.
\end{split}
\end{equation}
By Proposition \ref{prop:1}, Lemmas \ref{prop:2} and \ref{lemma3},
and the $Y$-periodicity of ${N^i_1}^c$,
\begin{equation*}
\label{eq:main38}
\begin{split}
||{N^i_1}^c(x,\cdot)||_{L^2(\omega)}\le C|x-x'|, ||\nabla_y {N^i_1}^c(x,\cdot)||_{L^2(\omega)}\le C|x-x'|, ||\Delta_y {N^i_1}^c(x,\cdot)||_{L^2(\omega)} \le  C|x-x'|
\end{split}
\end{equation*}
for all $x\in{\mathcal T}_L$. 
Then from \eqref{eq:main35}, 
$
||{N^i_1}^c||_{H^2(Y)}\le C|x-x'|. 
$
Similarly,
 $
 ||{N^i_2}^c||_{H^2(Y)} \le C|x-x'|
 $
for $C > 0$.
\end{proof}
We consider the problem: Find ${\Bar{\Bar{N}}^i_1}^c(x,y)\in{\cal V}_{L-l}$ and ${\Bar{\Bar{N}}^i_2}^c(x,y)\in{\cal V}_{L-l}$ such that
\begin{eqnarray}
&&B(x;({\bar{\bar{N}}^i_1}^c,{\bar{\bar{N}}^i_2}^c),(\phi_1,\phi_2))\nonumber\\
&&=-\int_Y (\kappa_1(x,y) -  \kappa_1(x',y))\nabla_y {N^i_1}(x',y) \cdot \nabla_y \phi_1(y) \mathrm{d}y-\int_Y (\kappa_1(x,y) - \kappa_1(x',y)) e^i \cdot \nabla_y \phi_1(y) \mathrm{d}y\nonumber\\
&&-\int_Y (\kappa_2(x,y) -  \kappa_2(x',y))\nabla_y {N^i_2}(x',y) \cdot \nabla_y \phi_2(y)\mathrm{d}y-\int_Y (\kappa_2(x,y) - \kappa_2(x',y)) e^i \cdot \nabla_y \phi_2(y)\mathrm{d}y\nonumber\\
&&\qquad\qquad+\int_Y (Q(x',y)-Q(x,y)) ({N^i_1}(x',y)- {N^i_2}(x',y))(\phi_1(y)- \phi_2(y)) \mathrm{d}y,
\label{eq:bbN1icN2ic}
\end{eqnarray}
for all $\phi_1\in{\cal V}_{L-l}$ and $\phi_2\in{\cal V}_{L-l}$. This is the FE approximation of \eqref{eq:N1icN2ic}.
We then have the following result.
\begin{lemma} There is a positive constant $C^0$ such that
\label{lem:5}
\begin{equation*}
\label{eq:main44} 
|||({N^i_1}^c(x,\cdot) - {\Bar{\Bar N}^i_1}^c(x,\cdot), {N^i_2}^c(x,\cdot) - {\Bar{\Bar N}^i_2}^c(x,\cdot))|||\leq C^0 2^{-L}.
\end{equation*}
\end{lemma}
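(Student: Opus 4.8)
\section*{Proof proposal for Lemma \ref{lem:5}}

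The plan is to read Lemma \ref{lem:5} as a standard Galerkin (Céa) error estimate for the exact correction pair $({N^i_1}^c,{N^i_2}^c)$ and its finite element approximation $({\bar{\bar N}^i_1}^c,{\bar{\bar N}^i_2}^c)$, and then to exploit the balance built into the hierarchy between the coarsening of the approximation spaces and the refinement of the macro-grid. The starting observation is that the right-hand sides of the exact problem \eqref{eq:N1icN2ic} and of the discrete problem \eqref{eq:bbN1icN2ic} are \emph{identical} (both are expressed through the exact cell solutions $N^i_k(x',\cdot)$, not through their approximations) and that the discrete test space ${\cal V}_{L-l}\times{\cal V}_{L-l}$ is a subspace of $W$. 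Subtracting the two identities therefore yields Galerkin orthogonality,
\[
B(x;({N^i_1}^c-{\bar{\bar N}^i_1}^c,{N^i_2}^c-{\bar{\bar N}^i_2}^c),(\phi_1,\phi_2))=0 \quad\text{for all }(\phi_1,\phi_2)\in{\cal V}_{L-l}\times{\cal V}_{L-l}.
\]

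From this orthogonality together with the uniform coercivity and boundedness of $B(x;\cdot,\cdot)$ (with constants $c_1,c_2$ as recorded after the definition of $B$), the usual Céa argument gives
\[
|||({N^i_1}^c-{\bar{\bar N}^i_1}^c,{N^i_2}^c-{\bar{\bar N}^i_2}^c)|||\le \frac{c_2}{c_1}\inf_{(\phi_1,\phi_2)\in{\cal V}_{L-l}\times{\cal V}_{L-l}}|||({N^i_1}^c-\phi_1,{N^i_2}^c-\phi_2)|||.
\]
Next I invoke Proposition \ref{prop:3}, which guarantees ${N^i_k}^c(x,\cdot)\in H^2_\#(Y)$ with $\|{N^i_k}^c(x,\cdot)\|_{H^2(Y)}\le C|x-x'|$, so that the approximation property \eqref{eq:FEapprox} applies at level $L-l$ to each component. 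Choosing for each $k$ a near-best approximant $\phi_k\in{\cal V}_{L-l}$ realizing \eqref{eq:FEapprox} for $w={N^i_k}^c$ gives $\|\nabla_y({N^i_k}^c-\phi_k)\|_{L^2(Y)}\le C2^{-(L-l)}\|{N^i_k}^c\|_{H^2(Y)}$ and $\|{N^i_k}^c-\phi_k\|_{L^2(Y)}\le C2^{-2(L-l)}\|{N^i_k}^c\|_{H^2(Y)}$. Since the triple norm is controlled by the two gradient terms plus $\|({N^i_1}^c-\phi_1)-({N^i_2}^c-\phi_2)\|_{L^2(Y)}\le\sum_k\|{N^i_k}^c-\phi_k\|_{L^2(Y)}$, the gradient contribution dominates and
\[
|||({N^i_1}^c-{\bar{\bar N}^i_1}^c,{N^i_2}^c-{\bar{\bar N}^i_2}^c)|||\le C2^{-(L-l)}\big(\|{N^i_1}^c\|_{H^2(Y)}+\|{N^i_2}^c\|_{H^2(Y)}\big)\le C2^{-(L-l)}|x-x'|.
\]

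The decisive step, and the reason the hierarchical scheme works at all, is the cancellation encoded in the construction of the grids: because $x\in\hiergrid{l}$ admits an anchor $x'\in\bigcup_{l'<l}\hiergrid{l'}$ with $\mathrm{dist}(x,x')<O(H2^{-l})$, i.e. $|x-x'|\le CH2^{-l}$, the factor $2^{l}$ produced by solving on the coarser space ${\cal V}_{L-l}$ is exactly absorbed by the factor $2^{-l}$ coming from the distance, so that
\[
2^{-(L-l)}|x-x'|\le C2^{-(L-l)}2^{-l}=C2^{-L},
\]
and setting $C^0$ to absorb $C$, $H$ and $c_2/c_1$ completes the argument. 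I expect the only genuine subtlety to be precisely this balancing: the $H^2$ regularity bound $\|{N^i_k}^c\|_{H^2}\le C|x-x'|$ of Proposition \ref{prop:3} must decay at exactly the rate $2^{-l}$ needed to compensate the loss of resolution $2^{+l}$ incurred on ${\cal V}_{L-l}$. Everything else (Galerkin orthogonality, Céa, the interpolation estimate) is routine; the one thing worth double-checking is that the $L^2$-difference term in $|||\cdot|||$ is genuinely of higher order, namely $O(2^{-2(L-l)}|x-x'|)$, so that it cannot spoil the final $2^{-L}$ bound, which indeed it cannot.
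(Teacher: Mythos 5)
Your proposal is correct and follows essentially the same route as the paper's own (very terse) proof: Céa's lemma applied to the pair \eqref{eq:N1icN2ic}--\eqref{eq:bbN1icN2ic}, the $H^2$ bound $\|{N^i_k}^c\|_{H^2(Y)}\le C|x-x'|$ from Proposition \ref{prop:3}, the approximation property \eqref{eq:FEapprox} on ${\cal V}_{L-l}$, and the balance $2^{-(L-l)}|x-x'|\le C2^{-L}$ from the grid construction. The only difference is that you spell out the Galerkin orthogonality justifying Céa's lemma and the higher-order nature of the $L^2$ term in $|||\cdot|||$, both of which the paper leaves implicit.
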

\begin{proof}
It follows from Cea's Lemma, Proposition \ref{prop:3} and \eqref{eq:FEapprox} that
\begin{equation*}
\label{eq:main44} 
\begin{split}
|||({N_1^i}^c-{\bar{\bar N}^i_1}^c,{N_2^i}^c-{\bar{\bar N}^i_2}^c)|||\le C2^{-(L-l)}(\|{N_1^i}^c\|_{H^2(Y)}+\|{N_2^i}^c\|_{H^2(Y)})\le C2^{-(L-l)}|x-x'|\le C^02^{-L}.
\end{split}
\end{equation*}
\end{proof}
\begin{proposition}
\label{prop:4}
There is a constant $c_l>0$ which only depends on the level $S_l$ of $x\in{\cal T}_L$ such that
\[
|||(\bar N_1^i(x,\cdot)-N_1^i(x,\cdot), \bar N_2^i(x,\cdot)-N_2^i(x,\cdot))|||\le c_l2^{-L}.
\]
\end{proposition}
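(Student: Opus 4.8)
The plan is to prove the estimate by induction on the level $l$ of the macro-grid point $x\in\mathcal{T}_L$, propagating the error bound from the coarser levels (where a higher-resolution FE space is used) down to the finer ones. Throughout, write $E_k(x)=\bar N_k^i(x,\cdot)-N_k^i(x,\cdot)$ for the total error of the $k$-th component at $x$. For the base case $l=0$ (anchor points), $\bar N_k^i(x,\cdot)$ is the plain Galerkin approximation of \eqref{eq:main17} in the finest space $\mathcal{V}_L$; combining Lemma \ref{lemma1} and Lemma \ref{lemma2} with the elliptic-regularity argument already used in Proposition \ref{prop:3} gives a uniform bound on $\|N_k^i(x,\cdot)\|_{H^2(Y)}$, so Cea's lemma together with the approximation property \eqref{eq:FEapprox} yields $|||(E_1(x),E_2(x))|||\le c_0 2^{-L}$.

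For the inductive step, fix $x\in S_l$ together with its nearby coarse point $x'\in\bigcup_{l'<l}S_{l'}$ satisfying $\mathrm{dist}(x,x')=O(H2^{-l})$. Using the reconstruction \eqref{eq:main25''} for $\bar N_k^i(x,\cdot)$ and the exact splitting $N_k^i(x,\cdot)=N_k^{i\,c}(x,\cdot)+N_k^i(x',\cdot)$, I would decompose
\[
E_k(x)=\bigl[\bar N_k^{i\,c}(x,\cdot)-\bar{\bar N}_k^{i\,c}(x,\cdot)\bigr]+\bigl[\bar{\bar N}_k^{i\,c}(x,\cdot)-N_k^{i\,c}(x,\cdot)\bigr]+E_k(x').
\]
The middle bracket is the pure FE error of the correction and is controlled directly by Lemma \ref{lem:5}, so its triple norm is at most $C^0 2^{-L}$. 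The last term is bounded by the induction hypothesis, $|||(E_1(x'),E_2(x'))|||\le c_{l'}2^{-L}$.

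The main work, and the main obstacle, is the first bracket $D_k=\bar N_k^{i\,c}-\bar{\bar N}_k^{i\,c}$. Both $(\bar N_1^{i\,c},\bar N_2^{i\,c})$ and $(\bar{\bar N}_1^{i\,c},\bar{\bar N}_2^{i\,c})$ solve a Galerkin problem in $\mathcal{V}_{L-l}$ with the \emph{same} bilinear form $B(x;\cdot,\cdot)$; they differ only because \eqref{eq:bN1icN2ic} feeds in the computed data $\bar N_k^i(x',\cdot)$ whereas \eqref{eq:bbN1icN2ic} feeds in the exact $N_k^i(x',\cdot)$. Subtracting the two variational identities and testing with $(D_1,D_2)\in\mathcal{V}_{L-l}\times\mathcal{V}_{L-l}$, the right-hand side collapses to integrals of the coefficient differences $\kappa_k(x,\cdot)-\kappa_k(x',\cdot)$ and $Q(x,\cdot)-Q(x',\cdot)$ paired precisely with $\nabla_y E_k(x')$ and $E_1(x')-E_2(x')$. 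Bounding those coefficient differences by $C|x-x'|$ via Assumption \ref{Lipschitz} and using the coercivity of $B$, I expect to obtain
\[
|||(D_1,D_2)|||\le C|x-x'|\;|||(E_1(x'),E_2(x'))|||\le CH2^{-l}\,c_{l'}2^{-L}.
\]
The delicate point is recognizing that the data mismatch between \eqref{eq:bN1icN2ic} and \eqref{eq:bbN1icN2ic} is exactly the inherited error $E_k(x')$ weighted by the Lipschitz-small factor $|x-x'|=O(H2^{-l})$; this smallness is what keeps the amplification of the inherited error under control at each refinement.

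Combining the three contributions yields the recursion $c_l\le(1+CH2^{-l})c_{l'}+C^0$ with $l'<l$. Since $\prod_{j\ge1}(1+CH2^{-j})=:M<\infty$, telescoping the recursion (using monotonicity to replace $c_{l'}$ by $\max_{l'<l}c_{l'}$) gives $c_l\le M(c_0+C^0 l)=O(l)$. Hence a finite constant $c_l$ depending only on the level of $x$ exists, which is the assertion; moreover, since $l\le L$, the accumulated error across the whole hierarchy is at most $O(L2^{-L})$, which accounts for the logarithmic factor in the optimal-complexity claim.
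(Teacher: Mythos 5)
Your proof is correct and follows essentially the same route as the paper: the same three-term decomposition of the error, Lemma \ref{lem:5} for the FE error of the corrector, and the same key observation that the data mismatch between \eqref{eq:bN1icN2ic} and \eqref{eq:bbN1icN2ic} is exactly the inherited error at $x'$ multiplied by coefficient differences of size $O(H2^{-l})$, yielding the paper's recursion $c_l=\gamma c_{l-1}2^{-l}+c_{l-1}+C^0$. Your final telescoping step showing $c_l=O(l)$ is not needed for Proposition \ref{prop:4} itself, but it is correct and anticipates the paper's subsequent theorem, which establishes the bound $C_*l2^{-L}$ with $C_*$ independent of $l$.
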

\begin{proof}
We will prove the proposition by induction. The conclusion holds for $l = 0$.  
We assume that for all $x'\in S_{l'}$ where $l' \le l-1$. 
\begin{equation}
\label{eq:main55'} 
\begin{split}
|||(\bar{N^{i}_1}(x',\cdot)-N^i_1(x',\cdot),\bar{N^{i}_2}(x',\cdot)-N^i_2(x',\cdot))|||\leq c_{l-1} 2^{-L}.
\end{split}
\end{equation}
From \eqref{eq:bN1icN2ic} and \eqref{eq:bbN1icN2ic}, we have
\beqas
&&B(x;({\bar {N_1^i}}^c(x,\cdot)-{\bar{\bar N}^i_1}^c(x,\cdot),{\bar {N_2^i}}^c(x,\cdot)-{\bar{\bar N}^i_2}^c(x,\cdot)),(\phi_1,\phi_2))\\
&&=-\int_Y (\kappa_1(x,y) - \kappa_1(x',y))\nabla_y (\bar{N^{i}_1}(x',y)-N^i_1(x',y)) \cdot \nabla_y \phi_1(y)\mathrm{d}y\\
&&-\int_Y (\kappa_2(x,y) - \kappa_2(x',y))\nabla_y (\bar{N^{i}_2}(x',y)-N^i_2(x',y)) \cdot \nabla_y\phi_2(y)\mathrm{d}y\\
&&+\int_Y (Q(x',y)-Q(x,y))((\bar{N^{i}_1}(x',y)-\bar{N^{i}_2}(x',y))-(N^i_1(x',y)-N^i_2(x',y))) (\phi_1(y)-\phi_2(y))\mathrm{d}y
\eeqas
for all $\phi_1\in{\cal V}_{L-l}$ and $\phi_2\in{\cal V}_{L-l}$. 
From Assumption \ref{Lipschitz} and the induction hypothesis, we have
\begin{equation}
\label{eq:main56} 
\begin{split}
|||({\bar {N_1^i}}^c(x,\cdot)-{\bar{\bar N}^i_1}^c(x,\cdot),{\bar {N_2^i}}^c(x,\cdot)-{\bar{\bar N}^i_2}^c(x,\cdot))|||\le \gamma c_{l-1}2^{-L-l}.
\end{split}
\end{equation}
where $\gamma > 0$ is independent of $x$ and $l$.
By Lemma \ref{lem:5} and (\ref{eq:main56}), 
\begin{equation}
\label{eq:main57} 
\begin{split}
|||({N_1^i}^c(x,\cdot)-{\bar {N_1^i}}^c(x,\cdot), {N_2^i}^c(x,\cdot)-{\bar {N_2^i}}^c(x,\dot))|||\le |||({N^i_1}^c(x,\cdot) - {\Bar{\Bar{N^i_1}}}^c(x,\cdot),{N^i_2}^c(x,\cdot) - {\Bar{\Bar{N^i_2}}}^c(x,\cdot))|||\\
+|||({\bar{N^i_1}}^c(x,\cdot) - {\Bar{\Bar{N^i_1}}}^c(x,\cdot),{\bar {N^i_2}}^c(x,\cdot) - {\Bar{\Bar{N^i_2}}}^c(x,\cdot))|||\le C^02^{-L}+\gamma c_{l-1} 2^{-L-l}.
\end{split}
\end{equation}
Using $\bar{N}^i_k(x, y)=\bar{N^i_k}^c(x, y)+\bar{N}^i_k(x', y)$, We have
\[
|||(N_1^i(x,\cdot)-\bar N^i_1(x,\cdot), N_2^i(x,\cdot)-\bar N_2^i(x,\cdot))|||\le c_l2^{-L},
\]
where 
\beq
c_l=\gamma c_{l-1}2^{-l}+c_{l-1}+C^0.
\label{eq:cl}
\eeq
\end{proof}
\begin{theorem}
Under Assumption \ref{Lipschitz} and the uniform boundedness of $\kappa_i(x,y)$ and $Q(x,y)$, there is a positive constant $C_*$ which depends only on the functions $\kappa_1$, $\kappa_2$ and $Q$ so that,
\begin{equation}
\label{eq:main55''''} 
|||(N_1^i(x,\cdot)-N_1^i(x,\cdot), N_2^i(x,\cdot)-N_2^i(x,\cdot))|||\le C_*l2^{-L}
%
\end{equation}
for $x\in S_l$. 
\end{theorem}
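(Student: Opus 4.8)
The plan is to read the desired estimate off directly from Proposition~\ref{prop:4}, whose content is precisely the per-level bound
\[
|||(\bar N_1^i(x,\cdot)-N_1^i(x,\cdot),\bar N_2^i(x,\cdot)-N_2^i(x,\cdot))|||\le c_l2^{-L}
\]
for $x\in S_l$, together with the recursion \eqref{eq:cl} governing the constants $c_l$. The only remaining task is therefore purely arithmetic: to show that the sequence $(c_l)$ grows at most linearly, $c_l\le C_*\,l$, with $C_*$ depending only on $\gamma$, $C^0$ and the anchor-point constant $c_0$ (hence only on $\kappa_1,\kappa_2,Q$ and the fixed FE approximation constants). Combining $c_l\le C_*l$ with the Proposition~\ref{prop:4} bound then yields \eqref{eq:main55''''} immediately.

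First I would rewrite \eqref{eq:cl} as $c_l=(1+\gamma 2^{-l})c_{l-1}+C^0$, separating the multiplicative factor $a_l:=1+\gamma 2^{-l}$ from the fixed additive increment $C^0$. Introducing the partial products $P_l:=\prod_{j=1}^l(1+\gamma 2^{-j})$ with $P_0=1$, and dividing the recursion by $P_l=a_lP_{l-1}$, I telescope it into
\[
\frac{c_l}{P_l}=c_0+C^0\sum_{m=1}^l\frac{1}{P_m}.
\]
The crucial observation is that the factors $a_j$ are summably close to $1$: since $\ln(1+\gamma 2^{-j})\le\gamma 2^{-j}$ and $\sum_{j\ge1}2^{-j}=1$, the infinite product converges and $P_l\le\exp\!\big(\gamma\sum_{j\ge1}2^{-j}\big)=e^{\gamma}$ uniformly in $l$. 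Because each $P_m\ge1$ we also have $\sum_{m=1}^l P_m^{-1}\le l$, so that $c_l\le P_l(c_0+C^0 l)\le e^{\gamma}(c_0+C^0 l)$. For $l\ge1$ this gives $c_l\le e^{\gamma}(c_0+C^0)\,l=:C_*\,l$, which is exactly the claimed linear-in-$l$ growth; the degenerate case $l=0$ is covered directly by the anchor-point (full reference) estimate.

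The step I expect to be the genuine crux --- as opposed to a routine manipulation --- is recognizing that the $2^{-l}$ decay of the perturbation factor is precisely what converts the naive geometric blow-up one would fear from iterating a multiplicative recursion into merely linear growth driven by the additive $C^0$ terms. Had the factor been a constant $1+\gamma$ rather than $1+\gamma 2^{-l}$, the product $P_l$ would grow like $(1+\gamma)^l$ and the bound would degrade to an exponential in $l$, destroying the optimality of the method. Thus the summability $\sum_j2^{-j}<\infty$ inherited from the dyadic refinement of the macrogrids is the essential structural ingredient; it is exactly the balance between coarsening the FE spaces to level $L-l$ and refining the macrogrid to spacing $H2^{-l}$ that both produces the factor $a_l=1+\gamma2^{-l}$ and keeps the resulting constant $C_*$ independent of $L$.
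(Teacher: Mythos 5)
Your proof is correct, and at the only step where it could differ from the paper's it takes a genuinely different route. Both arguments start from Proposition \ref{prop:4} and the recursion \eqref{eq:cl}, so everything reduces to showing $c_l\le C_*l$ with $C_*$ independent of $l$ and $L$. The paper proves this by induction with a threshold: it fixes $\bar l$ independent of $L$ with $l2^{-l}<\frac{1}{2\gamma}$ for $l>\bar l$, sets $C_*=\max\{\max_{0\le l\le\bar l}c_l/l,\,2C^0\}$, and in the induction step absorbs the multiplicative perturbation $\gamma c_{l-1}2^{-l}\le C_*/2$ and the additive increment $C^0\le C_*/2$ separately into $C_*l$. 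You instead solve the affine recursion $c_l=(1+\gamma2^{-l})c_{l-1}+C^0$ in closed form by dividing through by the partial products $P_l=\prod_{j=1}^{l}(1+\gamma2^{-j})$ and telescoping, then bound $P_l\le e^{\gamma}$ via $1+x\le e^x$ and the summability of $\gamma 2^{-j}$, and $\sum_{m\le l}P_m^{-1}\le l$ via $P_m\ge 1$. Your route yields the fully explicit constant $C_*=e^{\gamma}(c_0+C^0)$ and makes the structural point transparent: the convergence of the infinite product, i.e.\ the geometric decay of the perturbation inherited from the dyadic balance between macrogrid refinement and FE-space coarsening, is exactly what prevents exponential growth of $c_l$. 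The paper's threshold device encodes the same fact implicitly, and its constant $\max_{0\le l\le\bar l}c_l/l$ is even ill-defined at $l=0$ as written; in exchange, the paper's route needs no closed-form solution of the recursion, only a one-line induction step. Both proofs share the same harmless blemish at $l=0$, where the claimed bound $C_*\cdot 0\cdot 2^{-L}$ is vacuous: like the paper, you effectively read the statement as applying for $l\ge 1$, with anchor points covered by the full-resolution estimate $c_02^{-L}$, and your explicit acknowledgment of this is, if anything, the cleaner treatment.
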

\begin{proof}
We let $\bar{l}$ be an integer independent of $L$ such that $l2^{-l} < \frac{1}{2\gamma}$ for $l > \bar{l}$. And let
\begin{equation}
\label{eq:main55*} 
\begin{split}
C_{*} = \displaystyle\max \bigg\{\max_{0 \leq l \leq \bar{l}} \Big\{\frac{c_{l}}{l}\Big\}, 2 C^0\bigg\},
\end{split}
\end{equation}
where $C^0$ and $c_{l}$ are the constants in Lemma \ref{lem:5} and Proposition \ref{prop:4}. Now we prove
\begin{equation}
\label{eq:main55**} 
\begin{split}
|||(N^i_1(x,\cdot)-\bar N^{i}_1(x,\cdot), N^i_2(x,\cdot)-\bar N^i_2(x,\cdot))||| \leq C_{*} l 2^{-L}
\end{split}
\end{equation}
by induction. From (\ref{eq:main55*}), this holds for all $l \leq \bar{l}$. 
Suppose that \eqref{eq:main55**} holds for all $l' \leq l$. Then from (\ref{eq:cl}), we obtain
\begin{equation}
\label{eq:main55***} 
\begin{split}
&C_{l} \leq ((l-1) C_{*} + \frac{1}{2\gamma} \gamma C_{*} + \frac{C_{*}}{2} ) = C_{*}l.
\end{split}
\end{equation}
\end{proof}
\begin{theorem}
\label{theoremdegree}
The total number of degrees of freedom required to solve (\ref{eq:main17}) for all points in $\hiergrid{0}, \hiergrid{1}, \cdots, \hiergrid{L} $ is $\mathcal{O}((L+1)2^{dL})$
for the hierarchical solve while it is $\mathcal{O}((2^{dL})^2)$ in the full solve where cell problems are solved with the finest mesh level at all macrogrid points. 
\end{theorem}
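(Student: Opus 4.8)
The plan is to carry out a level-by-level count of degrees of freedom across the hierarchy and to exploit the fact that, as we move up the hierarchy, the growth in the number of macroscopic grid points at level $l$ is exactly compensated by the decay in the number of degrees of freedom of the finite element space $\mathcal{V}_{L-l}$ used there. First I would count the macroscopic points. Since $\macrogrid{l}$ has grid spacing $H2^{-l}$ in $\Omega\subset\mathbb{R}^d$, we have $|\macrogrid{l}|\sim (H2^{-l})^{-d}\sim 2^{dl}$, with the implied constant depending on $H$ and $|\Omega|$ but not on $L$. Consequently the hierarchy levels satisfy $|\hiergrid{0}|=|\macrogrid{0}|=O(1)$ and, for $l\ge 1$, $|\hiergrid{l}|=|\macrogrid{l}|-|\macrogrid{l-1}|\sim 2^{dl}(1-2^{-d})\sim 2^{dl}$.

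Next I would count the degrees of freedom per solve at each level. At an anchor point in $\hiergrid{0}$ we solve the full cell problem in $\mathcal{V}_L$; reading the mesh size off the approximation property \eqref{eq:FEapprox}, the space $\mathcal{V}_L$ corresponds to mesh size $2^{-L}$ on $Y\subset\mathbb{R}^d$ and hence, being a quasi-uniform finite element space, has $\sim 2^{dL}$ degrees of freedom. At a point in $\hiergrid{l}$ with $l\ge 1$ we solve only the correction problem \eqref{eq:bN1icN2ic} in the coarser space $\mathcal{V}_{L-l}$, which corresponds to mesh size $2^{-(L-l)}$ and hence has $\sim 2^{d(L-l)}$ degrees of freedom. (The coupling of the system and the $d$ choices of $e^i$ only contribute fixed multiplicative constants, absorbed into the asymptotics.)

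Then I would multiply and sum. The key observation is that the per-level cost $|\hiergrid{l}|\cdot(\text{DOFs at level }l)$ is, up to constants, $2^{dl}\cdot 2^{d(L-l)}=2^{dL}$, i.e.\ independent of $l$: the densification of the macrogrid and the coarsening of the finite element space balance exactly. Summing over the $L+1$ levels gives
\[
\sum_{l=0}^{L}|\hiergrid{l}|\cdot 2^{d(L-l)}\sim \sum_{l=0}^{L}2^{dL}=(L+1)2^{dL}=\mathcal{O}\big((L+1)2^{dL}\big),
\]
where the anchor level $l=0$ fits this pattern since $|\hiergrid{0}|=O(1)$ contributes $O(2^{dL})$. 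For the full reference solve one instead solves the cell problem in the finest space $\mathcal{V}_L$ at every one of the $\sim 2^{dL}$ points of $\macrogrid{L}$, giving $2^{dL}\cdot 2^{dL}=\mathcal{O}\big((2^{dL})^2\big)$.

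This is essentially a geometric-series bookkeeping argument, so there is no deep obstacle; the one point requiring care is tracking the constants that depend on $H$, $|\Omega|$ and the dimension $d$, and verifying that they stay independent of the refinement level $L$ so that the stated asymptotics hold uniformly in $L$. Interpreting the result, the factor $(L+1)$ is merely logarithmic in the number $2^{dL}$ of macroscopic points, so the hierarchical cost lies within a logarithmic factor of the cost $2^{dL}$ of a single finest-resolution solve, which establishes the claimed essential optimality against the quadratic cost of the full solve.
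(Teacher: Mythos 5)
Your proposal is correct and follows essentially the same argument as the paper: count $\mathcal{O}(2^{dl})$ points in $\hiergrid{l}$, multiply by the $\mathcal{O}(2^{d(L-l)})$ dimension of $\mathcal{V}_{L-l}$, observe the product $\mathcal{O}(2^{dL})$ is level-independent, and sum over the $L+1$ levels, comparing with $2^{dL}\cdot 2^{dL}$ for the full solve. The extra bookkeeping you do (deriving the point counts from the grid spacing $H2^{-l}$ and the dimension counts from the mesh size in \eqref{eq:FEapprox}) simply makes explicit what the paper asserts directly.
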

\begin{proof}
Since the number of macroscopic points in $\hiergrid{l}$ is $\mathcal{O}(2^{dl})$, and the space $\mathcal{V}_{L-l}$ is of dimension $\mathcal{O}(2^{d(L-l)})$, 
the total number of degrees of freedom for solving (\ref{eq:main17}) for all points in $S_{l}$ is $\mathcal{O}(2^{dl})\mathcal{O}(2^{d(L-l)}) = \mathcal{O}(2^{dL})$. 
Therefore, the total number of degrees of freedom required to solve (\ref{eq:main17}) for all points in $\hiergrid{0}, \hiergrid{1}, \cdots, \hiergrid{L} $ is $\mathcal{O}((L+1)2^{dL})$.
\end{proof}

\section{Numerical example}

In this section, we apply the hierarchical finite element algorithm to a numerical example for computing the effective coefficients of a multiscale multi-continuum system at a dense network of macrogrid points. 
To show the accuracy of the algorithm, we compare the results to the approximations to the effective coefficients obtained from 
solving the cell problems using the finest meshes at all macroscopic points. 
\subsection{Numerical Implementation}
We let $\Omega = [0, 1]^2$ be the macroscopic domain and $Y = [0,1]^2$ be the unit cell. 
We 
consider the locally periodic coefficients
\begin{equation*}
\label{eq:main55'} 
\begin{split}
&\kappa_1(x_1,y_1,y_2) = (2-ax_1)\cos(2\pi y_1)\sin(2\pi y_2)+3 \\
&\kappa_2(x_1,y_1,y_2) = (2-ax_1)\sin(2\pi y_1)\cos(2\pi y_2)+3\\
&Q(x_1,y_1,y_2) = (1+ax_1)\sin(2\pi y_1)\sin (2\pi y_2)+3
\end{split}
\end{equation*}
where the constant $a$ is chosen below.
 We use 4 square meshes in $ [0, 1]^2$ to construct a nested sequence of FE spaces, $\{{{\cal V}_{3-l}} \}^3_{l=0}$ so that the mesh size of
 each space is $h_l = 2^l (2^{-4})$ for $l = 0, 1, 2, 3$.
Since $\kappa_1$, $\kappa_2$ and $Q$ are independent of $x_2$, we only consider 1-dimensional macrogrids in $[0,1]$.
The nested macrogrids $\{ \macrogrid{l}\}^L_{l=0} \subset [0,1]$ and the subsequent macrogrid hierarchy, $\{ \hiergrid{l}\}^3_{l=0}$ are constructed as follows.
We first let $\macrogrid{0} =\hiergrid{0} = \{0, \frac{1}{2},1\}$.
Considering that our macrogrids have grid spacing $H2^{-l}$ for $l = 0, 1, 2, 3$, where $H=\frac{1}{2}$ in this case, we have following hierarchy of macrogrids.
\begin{equation*}
\hiergrid{0} = \{0,\frac{1}{2},1\}, \enspace \hiergrid{1} = \{\frac{1}{4},\frac{3}{4}\},\enspace \hiergrid{2} = \{\frac{1}{8},\frac{3}{8},\frac{5}{8},\frac{7}{8}\}, 
\enspace\hiergrid{3} = \{\frac{1}{16},\frac{3}{16},\frac{5}{16},\frac{7}{16},\frac{9}{16},\frac{11}{16},\frac{13}{16},\frac{15}{16}\}
\end{equation*}
Figure \ref{schematic} indicates how these macrogrids and the approximation spaces are related in numerical implementation.

\begin{figure}
  \centering
  \includegraphics[scale=0.6]{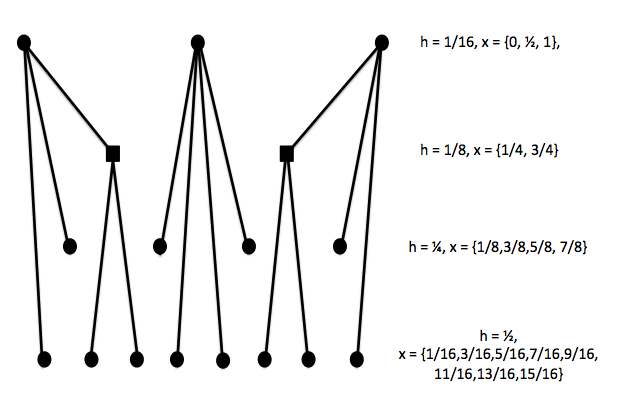}
  \caption{The hierarchy of one dimensional macrogrids and corresponding mesh size of FE spaces. The lines indicates correction relations. The squares indicate the points
  at which the solutions are corrected with the lower level solutions and used once more to correct upper level solutions.}
  \label{schematic}
\end{figure}

We implement the algorithm as follows. For $x' \in \hiergrid{0} = \{0, \frac{1}{2},1\}$, we solve (\ref{eq:main17}) for $\bar{N^i_1}(x', \cdot)$, $\bar{N^i_2}(x',\cdot) \in {\cal V}_3$, for all $\phi_1, \ \phi_2 \in {\cal V}_3$ by the standard Galerkin FEM.
We then use a simple 1-point interpolation to compute the correction terms.
That is, for $x \in \hiergrid{l}$ we choose $x' \in (\bigcup_{k<l}\hiergrid{k})$ 
such that $|x' - x| \leq 2^{-l}$. We let the $l$th macrogrid interpolation be
\begin{equation*}
\label{eq:main46}
\begin{split}
I^x_l(\bar{N}^i_k) = \bar{N}^i_k(x', \cdot), \ (k = 1,2).
\end{split}
\end{equation*} 
We find $\bar{N^{i}_1}^c(x,y)$ and $\bar{N^{i}_2}^c(x,y)$ in $\mathcal{V}_{L-l}$ such that
\begin{equation}
\label{eq:main47'} 
\begin{split}
\int_Y &\kappa_1(x,y) \nabla_y {\bar{N^{i}_1}}^c(x,y) \cdot  \nabla_y \phi_1(y)\mathrm{d}y
-\int_Y Q(x,y)({\bar{N^{i}_2}}^c(x,y) - {\bar{N^{i}_1}}^c(x,y)) \phi_1(y)\mathrm{d}y\\ = & 
-\int_Y \kappa_1(x,y) \nabla_y \bar{N^{i}_1}(x',y) \cdot \nabla_y \phi_1(y)\mathrm{d}y
-\int_Y \kappa_1(x,y)  e^i \cdot \nabla_y \phi_1(y)\mathrm{d}y
\\&+\int_Y Q(x,y)(\bar{N^{i}_2}(x',y)-\bar{N^{i}_1}(x',y)) \phi_1(y)\mathrm{d}y,
\end{split}
\end{equation}
and
\begin{equation}
\label{eq:main47''*} 
\begin{split}
\int_Y &\kappa_2(x,y) \nabla_y {\bar{N^{i}_2}}^c(x,y) \cdot  \nabla_y \phi_2(y) \mathrm{d}y
 -\int_Y Q(x,y)({\bar{N^{i}_1}}^c(x,y) - {\bar{N^{i}_2}}^c(x,y))\phi_2(y)\mathrm{d}y\\ = & 
-\int_Y \kappa_2(x,y) \nabla_y \bar{N^{i}_2}(x',y) \cdot \nabla_y \phi_2(y) \mathrm{d}y
-\int_Y \kappa_2(x,y) e^i \cdot \nabla_y \phi_2(y) \mathrm{d}y
\\&+\int_Y Q(x,y)(\bar{N^{i}_1}(x',y)-\bar{N^{i}_2}(x',y))\phi_2(y) \mathrm{d}y,
\end{split}
\end{equation}
for $\forall \phi_1,\phi_2 \in \mathcal{V}_{L-l}$. 
We let
\begin{equation*}
\label{eq:main48}
\begin{split}
 \bar{N}^i_k(x, \cdot) = \bar{N}^i_k(x', \cdot) + \bar{N^i_k}^c(x, \cdot), \ (k = 1,2)
\end{split}
\end{equation*}
be the approximation to $N_k^i(x,\cdot)$.
We continue  inductively.
For example, for $x = \frac{1}{2} \in \hiergrid{0}$, we compute $\bar{N^i_1}(\frac{1}{2}, \cdot)$, $\bar{N^i_2}(\frac{1}{2},\cdot)$ using the standard Galerkin FEM.
Then for $\frac{3}{8} \in \hiergrid{1}$, we find the correction terms 
$\bar{N^i_1}^c(\frac{3}{8}, \cdot)$, $\bar{N^i_2}^c(\frac{3}{8},\cdot) \in {\cal V}_{L-1}$ that satisfy (\ref{eq:main47'}) and (\ref{eq:main47''*}),
where $x' = \frac{1}{2}$.
And we let the solutions at $x = \frac{3}{8}$ be
\begin{equation*}
\label{eq:main49}
\begin{split}
 \bar{N}^i_k(\frac{3}{8}, y) = \bar{N}^i_k(\frac{1}{2}, y) + \bar{N^i_k}^c(\frac{3}{8}, y), \ (k = 1,2).
\end{split}
\end{equation*}
We continue this procedure based on Figure \ref{schematic}. 

Tables \ref{table:a1} and \ref{table:a01} indicate $\kappa^*_{111}$ and $\kappa^*_{211}$ obtained by both the hierarchical solve and the full solve where the finest mesh is used for all cell problems, at each $x_1$ and the relative errors between them,
where relative errors are calculated by $\frac{100 |\kappa^*_{full}-\kappa^*_{hier}|}{\kappa^*_{full}}$ with obvious notations for $a=1$ and $a=0.1$ respectively.
The results show clearly that the effective coefficients obtained from hierarchical algorithm are very closed to the reference effective coefficients. We can see from the tables that relatively large errors occur at the highest level macroscopic points where more than one layer of corrections is performed, i.e. the corrector itself is corrected by the solution at a macroscopic point belonging to a lower level. 
We note that the error for the case $a=0.1$ is much smaller as
 the change of $\kappa_i$ in $x$ is much smaller. That is, large Lipschitz constants in Assumption \ref{Lipschitz} tend to result in large errors. 
The results in Tables \ref{table:a1} and \ref{table:a01} are obtained when only one corrector point is employed. If we use more corrector points, the error can be reduced significantly. In
 Table \ref{table:interpolation} 
we show the relative errors, in comparison to the coefficients obtained from the full solve where the finest mesh is used for all the cell problems, for the effective coefficients obtained from the hierarchical solve 
for the two cases where one point and two point interpolations are used. The table shows that the result can be improved by employing two point interpolation. 
\clearpage
\begin{table}
\begin{tabular}{|c|c|c|c|c|c|c|}
  \hline
 \multirow{2}{*} {$x_1$} &  \multicolumn{3} {|c|} {$\kappa^*_{111}(x_1)$}  &  \multicolumn{3} {|c|} {$\kappa^*_{211}(x_1)$} \\
 \cline{2-7}
  & Full solve & Hierarchical solve & Relative errors(\%) & Full solve & Hierarchical solve & Relative errors(\%)\\
  \hline
$0$ & 2.8210 & 2.8210  & 0.0000 &2.8341 & 2.8341 &0.0000\\
[.2em]
$1 \over 16$ & 2.8331 & 2.8266 &0.2305&2.8448 & 2.8432 &0.0560\\
[.2em]
$1 \over 8$&  2.8447 & 2.8406 &0.1425&2.8550&2.8524 &0.0919\\
[.2em]
$3 \over 16$& 2.8557  & 2.8590&0.1160 &2.8649&2.8653&0.0163\\
[.2em]
$1 \over 4$&  2.8663 & 2.8639&0.0823&2.8743& 2.8735&0.0304\\
[.2em]
$5 \over 16$ & 2.8763 &2.8688 &0.2621&2.8834 &2.8813&0.0726\\
[.2em]
$3 \over 8$ &  2.8859   & 2.8886& 0.0936&2.8921  & 2.8936&0.0519 \\
[.2em]
$7 \over 16$ & 2.8950  & 2.8996 &0.1606&2.9005&2.9016&0.0376\\
[.2em]
$1 \over 2$ & 2.9036&  2.9036  &0.0000&2.9085&2.9085&0.0000\\
[.2em]
$9 \over 16$ & 2.9119& 2.9076  &0.1449&2.9161&2.9151&0.0336\\
[.2em]
$5 \over 8$ & 2.9197 & 2.9176 &0.0712&2.9234&2.9218&0.0531\\
[.2em]
$11 \over 16$ & 2.9271& 2.9317 &0.1580&2.9303&2.9317 &0.0470\\
[.2em]
$3 \over 4$ &2.9341& 2.9350 &0.0295&2.9369&2.9373&0.0167\\
[.2em]
$13 \over 16$ & 2.9407 & 2.9382 &0.0851&2.9431 & 2.9428&0.0096\\
[.2em]
$7 \over 8$ & 2.9470& 2.9484&0.0479&2.9490&2.9501&0.0387\\
[.2em]
$15 \over 16$ & 2.9528& 2.9557 &0.0980&2.9546&2.9552&0.0220\\
[.2em]
$1$ & 2.9583 & 2.9583 &0.0000&2.9598&2.9598&0.0000\\
  \hline
\end{tabular} \\
\caption{ a = 1, the effective coefficients $\kappa_{111}^*(x_1)$ and $\kappa_{211}^*(x_1)$ computed by full mesh reference and hierarchical solve along with percentage relative errors between those.}
\label{table:a1}
\end{table}

\begin{table}
\begin{tabular}{|c|c|c|c|c|c|c|}
  \hline
 \multirow{2}{*} {$x_1$} &  \multicolumn{3} {|c|} {$\kappa^*_{111}(x_1)$}  &  \multicolumn{3} {|c|} {$\kappa^*_{211}(x_1)$} \\
 \cline{2-7}
  & Full solve & Hierarchical solve & Relative errors(\%) & Full solve & Hierarchical solve & Relative errors(\%)\\
  \hline
$0$ & 2.8210 & 2.8210  & 0.0000 &2.8341 & 2.8341 &0.0000\\
[.2em]
$1 \over 16$ & 2.8222 & 2.8215 &0.0241&2.8352 & 2.8350 &0.0059\\
[.2em]
$1 \over 8$&  2.8235 & 2.8230 &0.0161&2.8363&2.8360 &0.0102\\
[.2em]
$3 \over 16$& 2.8247  & 2.8250&0.0125 &2.8373&2.8374&0.0020\\
[.2em]
$1 \over 4$&  2.8259 & 2.8256&0.0112&2.8384& 2.8383&0.0037\\
[.2em]
$5 \over 16$ & 2.8271 &2.8261 &0.0347&2.8395 &2.8392&0.0095\\
[.2em]
$3 \over 8$ &  2.8283   & 2.8288& 0.0154&2.8405  & 2.8408&0.0081 \\
[.2em]
$7 \over 16$ & 2.8295  & 2.8302 &0.0232&2.8416&2.8418&0.0056\\
[.2em]
$1 \over 2$ & 2.8307&  2.8307  & 0.0000 &2.8427&2.8427&0.0000\\
[.2em]
$9 \over 16$ & 2.8319& 2.8313  &0.0230&2.8437&2.8435&0.0056\\
[.2em]
$5 \over 8$ & 2.8331 & 2.8327 &0.0150&2.8448&2.8445&0.0096\\
[.2em]
$11 \over 16$ & 2.8343& 2.8352 &0.0327&2.8458&2.8461 &0.0090\\
[.2em]
$3 \over 4$ &2.8355& 2.8357 &0.0100&2.8468&2.8469&0.0035\\
[.2em]
$13 \over 16$ & 2.8366 & 2.8363 &0.0124&2.8479 & 2.8478&0.0019\\
[.2em]
$7 \over 8$ & 2.8378& 2.8382&0.0144&2.9490&2.8492&0.0093\\
[.2em]
$15 \over 16$ & 2.8390& 2.8396 &0.0221&2.8499&2.8501&0.0054\\
[.2em]
$1$ & 2.8401 & 2.8401 &0.0000&2.8510&2.8510&0.0000\\ \hline
\end{tabular} \\
\caption{ a = .1, the effective coefficients $\kappa_{111}^*(x_1)$ and $\kappa_{211}^*(x_1)$ computed by full mesh reference and hierarchical solve along with percentage relative errors between those.}
\label{table:a01}
\end{table}

%

\begin{table}{1-pt interpolation \ \ \; \; \; \; \; \; \; \; \; \; \; \ \ 2-pt interpolation} 
\centering
\begin{tabular}{ll}
\\
\begin{tabular}{|c|c|c|}
 \hline
   \multirow{2}{*} {$x_1$} & \multicolumn{2}{|c|} {Relative Errors (\%)} \\
 \cline{2-3}
 & $\kappa^*_{111}$ & $\kappa^*_{211}$ \\
  \hline
$1 \over 16$ & 0.2305&0.0560\\
[.2em]
$1 \over 8$&0.1425&0.0919\\
[.2em]
$3 \over 16$& 0.1160 &0.0163\\
[.2em]
$1 \over 4$&0.0823&0.0304\\
[.2em]
$5 \over 16$ &0.2621&0.0726\\
[.2em]
$3 \over 8$ & 0.0936&0.0519 \\
[.2em]
$7 \over 16$  &0.1606&0.0376\\
[.2em]
$9 \over 16$  &0.1449&0.0336\\
[.2em]
$5 \over 8$  &0.0712&0.0531\\
[.2em]
$11 \over 16$  &0.1580&0.0470\\
[.2em]
$3 \over 4$ &0.0295&0.0167\\
[.2em]
$13 \over 16$ &0.0851&0.0096\\
[.2em]
$7 \over 8$ &0.0479&0.0387\\
[.2em]
$15 \over 16$ &0.0980&0.0220\\
\hline
\end{tabular} 
&\ \ \ \ \ \ \ \ \ \
\begin{tabular}{|c|c|c|}
   \hline
   \multirow{2}{*} {$x_1$} & \multicolumn{2}{|c|} {Relative Errors (\%)} \\
 \cline{2-3}
 & $\kappa^*_{111}$ & $\kappa^*_{211}$ \\
  \hline
$1 \over 16$ & 0.0072&0.0022\\
[.2em]
$1 \over 8$&0.0093&0.0030\\
[.2em]
$3 \over 16$& 0.0100 &0.0026\\
[.2em]
$1 \over 4$&0.0070&0.0013\\
[.2em]
$5 \over 16$ &0.0081&0.0021\\
[.2em]
$3 \over 8$ & 0.0063&0.0020 \\
[.2em]
$7 \over 16$  &0.0042&0.0013\\
[.2em]
$9 \over 16$  &0.0026&0.0008\\
[.2em]
$5 \over 8$  &0.0032&0.0011\\
[.2em]
$11 \over 16$  &0.0034&0.0009\\
[.2em]
$3 \over 4$ &0.0022&0.0004\\
[.2em]
$13 \over 16$ &0.0027&0.0007\\
[.2em]
$7 \over 8$ &0.0020&0.0007\\
[.2em]
$15 \over 16$ &0.0014 &0.0004\\
 \hline
\end{tabular} 
\end{tabular}
\caption{Percentage relative errors between full mesh reference solve and hierarchical solve when a = 1.}
\label{table:interpolation}
\end{table}

\section{Proof of homogenization convergence}

In this section, we prove rigorously  the homogenization convergence, i.e. the convergence of the solution of the two scale equation \eqref{eq:main3'} to the solution of the homogenized equation (\ref{eq:main15}).
Throughout this section, we denote the spaces $L^2(\Omega)$ and $H_0^1(\Omega)$ as $H$ and $V$ respectively.
We recall the two-scale multi-continuum system
\begin{equation}
\label{eq:main67}
 \begin{split}
{\mathcal C}_{11}^\epsilon{\partial u_1^\epsilon (t,x) \over \partial t}-\text{div}(\kappa_1^\epsilon(x)\nabla u_1^\epsilon (t,x)) - {1 \over \epsilon^2}Q^\epsilon(x)(u_2^\epsilon(t,x)
-u_1^\epsilon(t,x)) = q,
 \end{split}
 \end{equation}
\begin{equation}
\label{eq:main68}
 \begin{split}
{\mathcal C}_{22}^\epsilon{\partial u_2^\epsilon (t,x) \over \partial t}-\text{div}(\kappa_2^\epsilon(x) \nabla u_2^\epsilon (t,x)) - {1 \over \epsilon^2}Q^\epsilon(x)(u_1^\epsilon (t,x)-u_2^\epsilon(t,x)) = q,
 \end{split}
 \end{equation}
 We have the following theorem.
\begin{lemma}
\label{unibound}
The solution $(u_1^\epsilon$, $u_2^\epsilon)$ of (\ref{eq:main67}) and (\ref{eq:main68}) are uniformly bounded in $L^\infty(0,T;H)$ and $L^2(0,T;V)$.
\end{lemma}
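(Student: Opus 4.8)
The plan is to derive a single energy identity for the coupled system by testing each equation against its own solution and summing, and then to close the estimate by Gr\"onwall's inequality. The decisive observation is that the $O(1/\epsilon^2)$ interaction terms, which at first glance threaten to blow up as $\epsilon\to0$, combine into a \emph{nonnegative dissipative} contribution once the two equations are added; hence they never appear on the side that must be bounded, and no negative power of $\epsilon$ survives.

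Concretely, I would first multiply \eqref{eq:main67} by $u_1^\epsilon$ and \eqref{eq:main68} by $u_2^\epsilon$, integrate over $\Omega$, and integrate the diffusion terms by parts using the homogeneous Dirichlet condition. Since $\mathcal C_{ii}^\epsilon$ does not depend on $t$, the parabolic terms become $\tfrac12\frac{d}{dt}\int_\Omega \mathcal C_{ii}^\epsilon |u_i^\epsilon|^2\,\dx$. Adding the two identities, the interaction contributions collapse to
\[
-\frac{1}{\epsilon^2}\int_\Omega Q^\epsilon\bigl[(u_2^\epsilon-u_1^\epsilon)u_1^\epsilon+(u_1^\epsilon-u_2^\epsilon)u_2^\epsilon\bigr]\,\dx=\frac{1}{\epsilon^2}\int_\Omega Q^\epsilon (u_2^\epsilon-u_1^\epsilon)^2\,\dx\ge 0,
\]
where the sign follows from $Q^\epsilon\ge c>0$ in \eqref{eq:coercivity}. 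This yields an identity whose left-hand side is the time derivative of the weighted $L^2$ energy plus the two nonnegative diffusion Dirichlet integrals plus this nonnegative interaction term, while its right-hand side is $\int_\Omega q(u_1^\epsilon+u_2^\epsilon)\,\dx$.

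Next, using the uniform lower bounds $\mathcal C_{ii}^\epsilon,\kappa_i^\epsilon\ge c$ together with the uniform upper bounds coming from continuity on the compact set $\overline\Omega\times Y$, I would control the weighted energy $E(t):=\int_\Omega(\mathcal C_{11}^\epsilon|u_1^\epsilon|^2+\mathcal C_{22}^\epsilon|u_2^\epsilon|^2)\,\dx$ from above and below by $\|u_1^\epsilon\|_H^2+\|u_2^\epsilon\|_H^2$. Bounding the source by Cauchy--Schwarz and Young, $\int_\Omega q(u_1^\epsilon+u_2^\epsilon)\,\dx\le C\|q\|_H^2+C'E(t)$, and discarding the nonnegative gradient and interaction terms, I arrive at $\frac{d}{dt}E(t)\le C\|q\|_H^2+C'E(t)$. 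Gr\"onwall then gives $E(t)\le (E(0)+CT\|q\|_H^2)e^{C'T}$ for $t\in[0,T]$, with $E(0)$ controlled by $\|g_1\|_H^2+\|g_2\|_H^2$; since none of these constants depend on $\epsilon$, this is the uniform $L^\infty(0,T;H)$ bound.

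Finally, for the $L^2(0,T;V)$ bound I would instead integrate the full energy identity over $[0,T]$, retaining the diffusion terms, to obtain
\[
c\int_0^T\bigl(\|\nabla u_1^\epsilon\|_H^2+\|\nabla u_2^\epsilon\|_H^2\bigr)\,\dt\le \tfrac12 E(0)+\int_0^T \|q\|_H\bigl(\|u_1^\epsilon\|_H+\|u_2^\epsilon\|_H\bigr)\,\dt,
\]
whose right-hand side is already bounded uniformly in $\epsilon$ by the $L^\infty(0,T;H)$ estimate just established; Poincar\'e's inequality, available because of the Dirichlet condition, upgrades the gradient control to the full $V$-norm. I do not expect a genuine obstacle here: the only technical subtlety is rigorously justifying the use of $u_i^\epsilon$ as a test function, which I would handle in the standard way by performing the computation on Galerkin approximations and passing to the limit. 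The conceptual heart of the argument is simply the favorable sign of the singular interaction term after summation.
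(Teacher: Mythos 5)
Your proposal is correct and follows essentially the same route as the paper: test each equation with its own solution, sum so that the $O(1/\epsilon^2)$ interaction terms combine into the nonnegative term $\frac{1}{\epsilon^2}\int_\Omega Q^\epsilon(u_2^\epsilon-u_1^\epsilon)^2\,\dx$, and close the resulting energy estimate. The only (immaterial) difference is that the paper integrates the identity over $(0,\tau)$ and absorbs the source terms via Young's inequality with a small parameter $\delta$, whereas you invoke Gr\"onwall in differential form; both are standard ways to finish the same estimate.
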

\begin{proof}
  Multiplying $\phi_1$ and $\phi_2$ $\in V$ to (\ref{eq:main67}) and (\ref{eq:main68}) respectively and integrating over $\Omega$, one has
  \begin{equation}
\label{eq:main68'}
 \begin{split}
\int_{\Omega}{\mathcal C}_{11}^\epsilon{\partial u_1^\epsilon  \over \partial t}\phi_1 \mathrm{d}x+ \int_{\Omega} \kappa_1^\epsilon\nabla u_1^\epsilon \cdot \nabla \phi_1 \mathrm{d}x
-\int_{\Omega} {1 \over \epsilon^2}Q^\epsilon(u_2^\epsilon-u_1^\epsilon)\phi_1\mathrm{d}x = \int_{\Omega}q \phi_1 \mathrm{d}x,\\
\int_{\Omega}{\mathcal C}_{22}^\epsilon{\partial u_2^\epsilon  \over \partial t} \phi_2 \mathrm{d}x+ \int_{\Omega} \kappa_2^\epsilon\nabla u_2^\epsilon \cdot \nabla \phi_2 \mathrm{d}x
-\int_{\Omega} {1 \over \epsilon^2}Q^\epsilon(u_1^\epsilon-u_2^\epsilon)\phi_2 \mathrm{d}x = \int_{\Omega}q \phi_2 \mathrm{d}x.
 \end{split}
 \end{equation}
Summing these equations, we get
\begin{equation}
\label{eq:main68''}
 \begin{split}
\int_{\Omega}{\mathcal C}_{11}^\epsilon{\partial u_1^\epsilon(t)  \over \partial t}\phi_1 \mathrm{d}x + \int_{\Omega} \kappa_1^\epsilon\nabla u_1^\epsilon(t) \cdot \nabla \phi_1 \mathrm{d}x
-\int_{\Omega} {1 \over \epsilon^2}Q^\epsilon(u_2^\epsilon(t)-u_1^\epsilon(t))\phi_1 \mathrm{d}x\\
+\int_{\Omega}{\mathcal C}_{22}^\epsilon{\partial u_2^\epsilon  \over \partial t}(t) \phi_2 \mathrm{d}x+ \int_{\Omega} \kappa_2^\epsilon\nabla u_2^\epsilon(t) \cdot \nabla \phi_2 \mathrm{d}x
-\int_{\Omega} {1 \over \epsilon^2}Q^\epsilon(u_1^\epsilon(t)-u_2^\epsilon(t))\phi_2 \mathrm{d}x\\
= \int_{Q}q(t) \phi_1\mathrm{d}x+\int_{\Omega}q(t) \phi_2 \mathrm{d}x
 \end{split}
 \end{equation}
 $\forall\phi_1,\phi_2\in V$. 
 Substituting $u_1^\epsilon$ and $u_2^\epsilon$ into  $\phi_1$ and $\phi_2$ in (\ref{eq:main68''}) respectively, we have
 \begin{equation*}
\label{eq:main71}
 \begin{split}
\int_{\Omega}{\mathcal C}_{11}^\epsilon{\partial u_1^\epsilon(t)  \over \partial t} u_1^\epsilon(t) \mathrm{d}x
+ \int_{\Omega}{\mathcal C}_{22}^\epsilon{\partial u_2^\epsilon(t)  \over \partial t} u_2^\epsilon(t) \mathrm{d}x
 + \int_{\Omega} \kappa_1^\epsilon\nabla u_1^\epsilon(t) \cdot \nabla u_1^\epsilon(t) \mathrm{d}x&\\
 + \int_{\Omega} \kappa_2^\epsilon\nabla u_2^\epsilon(t) \cdot \nabla u_2^\epsilon (t) \mathrm{d}x
+{1 \over \epsilon^2} \int_{\Omega} Q^\epsilon(u_2^\epsilon(t)-u_1^\epsilon(t))^2 \mathrm{d}x
= \int_{\Omega}q u_1^\epsilon(t) + \int_{\Omega}q u_2^\epsilon(t) \mathrm{d}x &
 \end{split}
 \end{equation*}
Integrating this equation over $(0,\tau)$, we get
 \begin{equation}
\label{eq:main73}
 \begin{split}
\frac{1}{2}\int_{\Omega} {\mathcal C}_{11}^\epsilon |u_1^\epsilon(\tau,x)|^2 \mathrm{d}x+ \frac{1}{2} \int_{\Omega} {\mathcal C}_{22}^\epsilon |u_2^\epsilon(\tau,x)|^2 \mathrm{d}x
 + \int^\tau_0\int_{\Omega} \kappa_1^\epsilon\nabla u_1^\epsilon \cdot \nabla u_1^\epsilon \mathrm{d}x \mathrm{d}t\\
 + \int^\tau_0\int_{\Omega} \kappa_2^\epsilon\nabla u_2^\epsilon \cdot \nabla u_2^\epsilon \mathrm{d}x \mathrm{d}t
+{1 \over \epsilon^2} \int^\tau_0\int_{\Omega} Q^\epsilon(u_2^\epsilon-u_1^\epsilon)^2 \mathrm{d}x \mathrm{d}t\\
= \int^\tau_0\int_{\Omega}q u_1^\epsilon \mathrm{d}x\mathrm{d}t
+\int^\tau_0 \int_{\Omega}q u_2^\epsilon \mathrm{d}x\mathrm{d}t
+ \frac{1}{2} \int_{\Omega} {\mathcal C}_{11}^\epsilon|u_1^\epsilon(0,x)|^2 \mathrm{d}x
+ \frac{1}{2} \int_{\Omega} {\mathcal C}_{22}^\epsilon|u_2^\epsilon(0,x)|^2 \mathrm{d}x.&
 \end{split}
 \end{equation}
 Therefore, 
  \begin{equation*}
\label{eq:main73'}
 \begin{split}
\frac{1}{2}\int_{\Omega} {\mathcal C}_{11}^\epsilon |u_1^\epsilon(\tau,x)|^2 \mathrm{d}x
+ \frac{1}{2} \int_{\Omega} {\mathcal C}_{22}^\epsilon |u_2^\epsilon(\tau,x)|^2 \mathrm{d}x
 + \int^\tau_0\int_{\Omega} \kappa_1^\epsilon\nabla u_1^\epsilon \cdot \nabla u_1^\epsilon\mathrm{d}x\mathrm{d}t
 + \int^\tau_0\int_{\Omega} \kappa_2^\epsilon\nabla u_2^\epsilon \cdot \nabla u_2^\epsilon \mathrm{d}x\mathrm{d}t   &
\\ \leq c \int^\tau_0\int_{\Omega}|q|^2 \mathrm{d}x\mathrm{d}t
+ \delta \int^\tau_0\int_{\Omega} |u_1^\epsilon|^2  \mathrm{d}x \mathrm{d}t
+ c \int^\tau_0 \int_{\Omega}|q|^2 \mathrm{d}x\mathrm{d}t \\
 +\delta \int^\tau_0\int_{\Omega} |u_2^\epsilon|^2\mathrm{d}x\mathrm{d}t
 + \int_{\Omega} |{\mathcal C}_{11}^\epsilon| |u_1^\epsilon(0,x)|^2 \mathrm{d}x
+ \int_{\Omega} |{\mathcal C}_{22}^\epsilon| |u_2^\epsilon(0,x)|^2 \mathrm{d}x.&
 \end{split}
 \end{equation*}
Using the uniform boundedness from below of $C_{11}^\ep$ and $C_{22}^\ep$, we have
\[
\begin{split}
c\|u_1^\epsilon(\tau,\cdot)\|_H^2+c\|u_2^\epsilon(\tau,\cdot)\|_H^2+\int^\tau_0\int_{\Omega} \kappa_1^\epsilon\nabla u_1^\epsilon \cdot \nabla u_1^\epsilon\mathrm{d}x\mathrm{d}t
 + \int^\tau_0\int_{\Omega} \kappa_2^\epsilon\nabla u_2^\epsilon \cdot \nabla u_2^\epsilon \mathrm{d}x\mathrm{d}t  &\\
 \le c+\delta\int_0^T\|u_1^\epsilon(t,\cdot)\|_H^2dt+\delta\int_0^T\|u_2^\epsilon(t,\cdot)\|_H^2dt.
 \end{split}
 \]
 Choosing $\delta$ sufficiently small, we deduce that $u_1^\epsilon$ and $u_2^\epsilon$ are uniformly bounded in $L^\infty(0,T;H)$ and $L^2(0,T;V)$. 
 \end{proof}
  Note that because of the 5th term of equation (\ref{eq:main73}), $\displaystyle\lim_{\epsilon \to 0}u_1^\epsilon = \lim_{\epsilon \to 0}u_2^\epsilon$.
 Thus, there exist subsequences of $u_1^\epsilon$ and $u_2^\epsilon$, which we still denote by $u_1^\epsilon$ and $u_2^\epsilon$ , and $u_0$ such that
\begin{equation*}
\label{eq:main74}
 \begin{split}
u_1^{\epsilon}, u_2^{\epsilon} \wc u_0 \enspace \textrm{in} \enspace L^2(0,T;V). 
 \end{split}
 \end{equation*}
 Recall that $({N^i_1},{N^i_2})\in W$ is the solution of cell problem.
 \begin{equation}
  \label{eq:dualcell}
 \begin{split}
   &\text{div}_y(\kappa_1(x,y) (e^i+\nabla_y {N^i_1}(x,y))) +
   Q(x,y) ({N^i_2}(x,y)- {N^i_1}(x,y)) =0 \qquad  \\
   &\text{div}_y(\kappa_2(x,y) (e^i+\nabla_y {N^i_2}(x,y))) +
   Q(x,y) ( {N^i_1}(x,y)- {N^i_2}(x,y)) =0.\\
 \end{split}
\end{equation}
We assume that $N_1^i$ and $N_2^i$ are sufficiently smooth with respect to both $x$ and $y$.
Let $\omega_1(x) = \frac{x_i}{\epsilon} + N^i_1(x,\frac{x}{\epsilon})$ and $\omega_2(x) = \frac{x_i}{\epsilon} + N^i_2(x,\frac{x}{\epsilon})$. 
We define $\omega_1^\epsilon$ and $\omega_2^\epsilon$ as 
 \begin{equation*}
  \label{eq:main81}
 \begin{split}
\omega_1^\epsilon(x) = \epsilon \omega_1(x,\frac{x}{\epsilon}),\ \ \omega_2^\epsilon(x) = \epsilon \omega_2(x,\frac{x}{\epsilon}) .
 \end{split}
\end{equation*}
Assuming that $\kappa_1$, $\kappa_2$, $N_1^i$ and $N_2^i$ are sufficiently smooth, for all $\psi_1, \psi_2\in V$ we have 
 \begin{equation}
\label{eq:main81'''}
 \begin{split}
-\int_\Omega&\text{div}(\kappa_1^\epsilon(x)\nabla \omega^\epsilon_1(x)) \psi_1(x) \mathrm{d}x
- {1 \over \epsilon^2}\int_\Omega Q^\epsilon(x)( \omega_2^\epsilon(x)- \omega_1^\epsilon(x))\psi_1(x)\mathrm{d}x\\
= -&\frac{1}{\epsilon}\int_\Omega 
\text{div}_y(\kappa_1(x,\frac{x}{\epsilon}) (e^i + \nabla_y N_1^i (x,\frac{x}{\epsilon})))\psi_1(x) \mathrm{d}x
- {1 \over \epsilon}\int_\Omega Q(x,\frac{x}{\epsilon})(N^i_2(x,\frac{x}{\epsilon})-N^i_1(x,\frac{x}{\epsilon}))\psi_1(x)\mathrm{d}x\\
-&\epsilon\int_\Omega\div_x (\kappa_1(x,\frac{x}{\epsilon})\nabla_x(N_1^i(x,\frac{x}{\epsilon})))\psi_1(x)\mathrm{d}x
-\int_\Omega \div_x(\kappa_1(x,\frac{x}{\epsilon})(e^i + \nabla_y N_1^i (x,\frac{x}{\epsilon})))\psi_1(x)\mathrm{d}x
\\-&\int_\Omega\div_y(\kappa_1(x,\frac{x}{\epsilon})\nabla_x N_1^i(x,\frac{x}{\epsilon}))\psi_1(x)\mathrm{d}x\\
=-&\epsilon\int_\Omega\div_x (\kappa_1(x,\frac{x}{\epsilon})\nabla_xN_1^i(x,\frac{x}{\epsilon}))\psi_1(x)\mathrm{d}x 
- \int_\Omega \div_x(\kappa_1(x,\frac{x}{\epsilon})(e^i + \nabla_y N_1^i (x,\frac{x}{\epsilon})))\psi_1(x)\mathrm{d}x
\\-& \int_\Omega\div_y(\kappa_1(x,\frac{x}{\epsilon})\nabla_x N_1^i(x,\frac{x}{\epsilon}))\psi_1(x)\mathrm{d}x
 \end{split}
 \end{equation}
 and
 \begin{equation}
\label{eq:main81'''*}
 \begin{split}
-\int_\Omega&\text{div}(\kappa_2^\epsilon(x)\nabla \omega^\epsilon_2(x)) \psi_2(x)\mathrm{d}x
- {1 \over \epsilon^2}\int_\Omega Q^\epsilon(x)( \omega_1^\epsilon(x)- \omega_2^\epsilon(x))\psi_2(x)\mathrm{d}x\\
= -&\frac{1}{\epsilon}\int_\Omega 
\text{div}_y(\kappa_2(x,\frac{x}{\epsilon})(e^i + \nabla_y N_2^i (x,\frac{x}{\epsilon}))) \psi_2(x)\mathrm{d}x
- {1 \over \epsilon}\int_\Omega Q(x,y)(N^i_1(x,\frac{x}{\epsilon})-N^i_2(x,\frac{x}{\epsilon})) \psi_2(x)\mathrm{d}x\\
-&\epsilon\int_\Omega\div_x (\kappa_2(x,\frac{x}{\epsilon})\nabla_x(N_2^i(x,\frac{x}{\epsilon})))\psi_2(x) \mathrm{d}x
-\int_\Omega \div_x(\kappa_2(x,\frac{x}{\epsilon})(e^i + \nabla_y N_2^i (x,\frac{x}{\epsilon})))\psi_2(x)\mathrm{d}x
\\-&\int_\Omega\div_y(\kappa_2(x,\frac{x}{\epsilon})\nabla_x N_2^i(x,\frac{x}{\epsilon}))\psi_2(x) \mathrm{d}x\\
=-&\epsilon\int_\Omega\div_x (\kappa_2(x,\frac{x}{\epsilon})\nabla_xN_2^i(x,\frac{x}{\epsilon}))\psi_2(x)\mathrm{d}x
- \int_\Omega \div_x(\kappa_2(x,\frac{x}{\epsilon}) (e^i + \nabla_y N_2^i (x,\frac{x}{\epsilon})))\psi_2(x)\mathrm{d}x
\\-& \int_\Omega\div_y(\kappa_2(x,\frac{x}{\epsilon})\nabla_x N_2^i(x,\frac{x}{\epsilon}))\psi_2(x)\mathrm{d}x
 \end{split}
 \end{equation}
due to (\ref{eq:dualcell}). 
Let $\phi_1(x) = \phi(x)\omega_1^\epsilon(x)$,  
 $\phi_2(x) = \phi(x)\omega_2^\epsilon(x)$ where $\phi \in \mathcal{C}^\infty_0(\Omega)$ in (\ref{eq:main68'}), we have
\begin{equation}
\label{eq:main82*}
 \begin{split}
\int_{\Omega}{\mathcal C}_{11}^\epsilon{\partial u_1^\epsilon  \over \partial t}\phi\omega_1^\epsilon \mathrm{d}x
+\int_{\Omega}{\mathcal C}_{22}^\epsilon{\partial u_2^\epsilon  \over \partial t} \phi\omega_2^\epsilon \mathrm{d}x
+ \int_{\Omega} \kappa_1^\epsilon\nabla u_1^\epsilon \cdot \nabla (\phi\omega_1^\epsilon) \mathrm{d}x
+\int_{\Omega} \kappa_2^\epsilon\nabla u_2^\epsilon \cdot \nabla (\phi\omega_2^\epsilon) \mathrm{d}x\\
+\int_{\Omega} {1 \over \epsilon^2}Q^\epsilon(u_1^\epsilon-u_2^\epsilon)(\omega_1^\epsilon-\omega_2^\epsilon)\phi \mathrm{d}x
= \int_{\Omega}q \phi\omega_1^\epsilon  \mathrm{d}x+\int_{\Omega}q \phi\omega_2^\epsilon \mathrm{d}x.
 \end{split}
 \end{equation}
Let $\psi_1(x)$ and $\psi_2(x)$ in (\ref{eq:main81'''}) and (\ref{eq:main81'''*}) be $\phi u_1^\epsilon$ and $\phi u_2^\epsilon$ respectively. We have
 \begin{equation}
\label{eq:main82**}
 \begin{split}
\int_{\Omega} \kappa_1^\epsilon\nabla \omega_1^\epsilon \cdot \nabla (\phi u_1^\epsilon) \mathrm{d}x
+ \int_{\Omega} \kappa_2^\epsilon\nabla \omega_2^\epsilon \cdot \nabla(\phi u_2^\epsilon) \mathrm{d}x
+\int_{\Omega} {1 \over \epsilon^2}Q^\epsilon(\omega_1^\epsilon-\omega_2^\epsilon)(u_1^\epsilon-u_2^\epsilon)\phi \mathrm{d}x\\
= -\epsilon \int_{\Omega} \div_x (\kappa_1(x,\xoe) \nabla_x N_1^i(x,\xoe)) \phi u_1^\epsilon\mathrm{d}x
- \int_{\Omega} \div_x(\kappa_1(x,\xoe)(e^i + \nabla_y N_1^i (x,\xoe)))\phi u_1^\epsilon \mathrm{d}x\\
-\int_{\Omega} \div_y( \kappa_1(x,\xoe)\nabla_x N_1^i(x,\xoe) )\phi u_1^\epsilon \mathrm{d}x
- \epsilon \int_{\Omega} \div_x( \kappa_2(x,\xoe)\nabla_x N_2^i(x,\xoe))\phi u_2^\epsilon \mathrm{d}x\\
- \int_{\Omega} \div_x(\kappa_2(x,\xoe)(e^i + \nabla_y N_2^i (x,\xoe))\phi u_2^\epsilon \mathrm{d}x
- \int_{\Omega} \div_y (\kappa_2(x,\xoe)\nabla_x N_2^i(x,\xoe)) \phi u_2^\epsilon \mathrm{d}x
 \end{split}
 \end{equation}
 Let $\psi\in C^\infty_0(0,T)$. We multiply \eqref{eq:main82*} and \eqref{eq:main82**} by $\psi$ and intergrate over $(0,T)$ with respect to $t$. After subtracting the resulting equations by each other, we obtain
 \begin{equation}
\label{eq:main82'}
 \begin{split}
\int_0^T\int_\Omega {\mathcal C}_{11}^\epsilon{\partial u_1^\epsilon\over \partial t} \phi\psi\omega_1^\epsilon \mathrm{d}x\dt
+ \int_0^T\int_\Omega \kappa_1^\epsilon \nabla u_1^\epsilon\cdot \nabla \phi \omega_1^\epsilon \psi \mathrm{d}x\dt
- \int_0^T\int_\Omega \kappa_1^\epsilon \nabla \omega_1^\epsilon\cdot \nabla\phi u_1^\epsilon\psi \mathrm{d}x \dt\\
+ \int_0^T\int_\Omega {\mathcal C}_{22}^\epsilon{\partial u_2^\epsilon \over \partial t} \phi\psi\omega_2^\epsilon \mathrm{d}x\dt
+\int_0^T \int_\Omega \kappa_2^\epsilon \nabla u_2^\epsilon\cdot \nabla\phi \omega_2^\epsilon\psi \mathrm{d}x\dt
- \int_0^T\int_\Omega \kappa_2^\epsilon \nabla \omega_2^\epsilon\cdot\nabla\phi u_2^\epsilon\psi\mathrm{d}x\dt \\
= \int_0^T\int_\Omega q \phi \omega_1^\epsilon\psi \dx\dt+ \int_0^T\int_\Omega q \phi \omega_2^\epsilon\psi \mathrm{d}x\dt \\
+\epsilon\int_0^T \int_{\Omega} \div_x (\kappa_1(\cdot,{\cdot\over\epsilon}) \nabla_x N_1^i(\cdot,{\cdot\over\epsilon}) \phi u_1^\epsilon\psi\mathrm{d}x\dt
+ \int_0^T\int_{\Omega} \div_x(\kappa_1(\cdot,{\cdot\over\epsilon})(e^i + \nabla_y N_1^i(\cdot,{\cdot\epsilon}) ))\phi u_1^\epsilon\psi \mathrm{d}x\dt\\
+\int_0^T\int_{\Omega} \div_y( \kappa_1(\cdot,{\cdot\over\epsilon})\nabla_x N_1^i(\cdot,{\cdot\over\epsilon} )\phi u_1^\epsilon \psi\mathrm{d}x\dt
+ \epsilon \int_0^T\int_{\Omega} \div_x( \kappa_2(\cdot,{\cdot\over\epsilon})\nabla_x N_2^i(\cdot,{\cdot\over\epsilon}))\phi u_2^\epsilon\psi \mathrm{d}x\dt\\
+ \int_0^T\int_{\Omega} \div_x(\kappa_2(\cdot,{\cdot\over\epsilon})(e^i + \nabla_y N_2^i (\cdot,{\cdot\over\epsilon})))\phi u_2^\epsilon\psi \mathrm{d}x\dt
+\int_0^T \int_{\Omega} \div_y (\kappa_2(\cdot,{\cdot\over\epsilon})\nabla_x N_2^i(\cdot,{\cdot\over\epsilon}) \phi u_2^\epsilon\psi \mathrm{d}x\dt.
\end{split}
\end{equation}
We have the following lemma.
\begin{lemma}
\label{lemma2}
The functions $\int_0^T\psi(t)u_1^\ep(x,t)dt$ and  $\int_0^T\psi(t)u_2^\ep(x,t)dt$ converge strongly in $H$ to $\int_0^T\psi(t) u_0(x,t)dt$.
\end{lemma}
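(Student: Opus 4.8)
The plan is to exploit the fact that integrating $u_k^\epsilon$ against the fixed time profile $\psi$ eliminates any need for compactness in the time variable, leaving a family of functions that is bounded in $V=H_0^1(\Omega)$; the spatial compact embedding $V\hookrightarrow H$ then does all the work. First I would set
\[
v_k^\epsilon(x)=\int_0^T\psi(t)\,u_k^\epsilon(x,t)\dt,\qquad k=1,2.
\]
Since $u_k^\epsilon\in L^2(0,T;V)$, this Bochner integral defines an element of $V$, and by the Cauchy--Schwarz inequality in $t$,
\[
\|v_k^\epsilon\|_V\le\int_0^T|\psi(t)|\,\|u_k^\epsilon(t,\cdot)\|_V\dt\le\|\psi\|_{L^2(0,T)}\,\|u_k^\epsilon\|_{L^2(0,T;V)}.
\]
By Lemma \ref{unibound} the right-hand side is bounded uniformly in $\epsilon$, so $(v_k^\epsilon)_\epsilon$ is a bounded sequence in $V$.

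Next I would invoke the Rellich--Kondrachov theorem: because $\Omega$ is a bounded domain, the embedding $V=H_0^1(\Omega)\hookrightarrow H=L^2(\Omega)$ is compact. Hence from any subsequence of $(v_k^\epsilon)$ one can extract a further subsequence converging strongly in $H$. To identify the limit, note that the time-averaging map $w\mapsto\int_0^T\psi(t)w(t)\dt$ is a bounded linear operator from $L^2(0,T;V)$ into $V$, hence weak-to-weak continuous. Since $u_k^\epsilon\wc u_0$ in $L^2(0,T;V)$ along the subsequence fixed earlier, it follows that $v_k^\epsilon\wc v_0:=\int_0^T\psi(t)u_0(\cdot,t)\dt$ weakly in $V$, and a fortiori weakly in $H$.

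I would then close the argument by uniqueness of weak limits: every strongly $H$-convergent subsequence of $(v_k^\epsilon)$ must have limit $v_0$. A standard subsequence argument upgrades this to convergence of the whole sequence, giving $v_k^\epsilon\to v_0$ strongly in $H$ for $k=1,2$, which is exactly the assertion of the lemma.

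The computation here is essentially routine; there is no serious obstacle, and the one point that needs care is the identification of the limit, i.e. confirming that the weak $L^2(0,T;V)$ limit $u_0$ of $u_k^\epsilon$ is carried by the time-averaging operator to the weak $V$-limit of $v_k^\epsilon$, so that the strong $H$-limit produced by compactness is forced to equal $\int_0^T\psi u_0\dt$. The main conceptual step is recognizing that temporal integration against $\psi$ converts the merely weak space-time convergence of $u_k^\epsilon$ into strong spatial convergence of the averaged functions, which is precisely what is needed afterwards to pass to the limit in the products of weakly convergent factors (a weak-times-strong pairing) appearing in \eqref{eq:main82'}.
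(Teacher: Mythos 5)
Your proof is correct and follows essentially the same route as the paper's: uniform boundedness of the time-averaged functions in $V$ (via Lemma \ref{unibound}), strong compactness from the embedding $V\hookrightarrow H$, and identification of the limit through the weak convergence $u_k^\epsilon\wc u_0$ in $L^2(0,T;V)$. Your write-up is in fact somewhat more careful than the paper's, which only sketches the limit identification by testing against $\phi\in C^\infty_0(\Omega)$ and omits the subsequence-to-whole-sequence upgrade that you spell out.
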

{\it Proof\ \ } This is the standard result in Jikov et al. \cite{jikov2012homogenization}. As $u_1^\ep$ is uniformly bounded in $L^2(0,T;V)$, $\int_0^T\psi(t)u_1^\ep(x,t)\dt$ is uniformly bounded in $V$ when $\epsilon\to 0$. Thus we can extract a subsequence which converges weakly in $V$ and strongly in $H$. As for all $\phi\in C^\infty_0(\Omega)$, 
\[
\int_\Omega\int_0^T\psi(t)u_1^\ep(x,t)\phi(x)\dt\dx\to \int_\Omega\int_0^T\psi(t)u_0(x,t)\phi(x)\dt\dx,
\]
the limit is $\int_0^T\psi(t)u_0(x,t)\dt$.\hfill$\Box$\\
We have
\[
\begin{split}
\int_0^T\int_\Omega C_{11}^\ep{\partial u_1^\ep\over\partial t}\phi\psi\omega_1^\ep \dx\dt=-\int_\Omega C_{11}^\ep\left(\int_0^T u_1^\ep{\partial\psi\over\partial t}dt\right)\phi\omega_1^\ep \dx.
\end{split}
\]
As $C_{11}^\ep$ converges weakly to $\int_YC_{11}(x,y)\dy$ in $H$, $\int_0^Tu_1^\ep{\partial\psi\over\partial t}\dt$ converges weakly to $\int_0^Tu_0{\partial\psi\over\partial t}\dt$ in $V$, we have 
\[
\begin{split}
\lim_{\ep\to 0}\int_0^T\int_\Omega C_{11}^\ep{\partial u_1^\ep\over\partial t}\phi\psi\omega_1^\ep \dx\dt=
-\int_0^T\int_\Omega\left(\int_YC_{11}(x,y)dy\right)u_0{\partial\psi\over\partial t}\phi x_i \dx\dt \\
=\int_0^T\int_\Omega\left(\int_YC_{11}(x,y)dy\right){\partial u_0\over\partial t}\psi\phi x_i \dx\dt.
\end{split}
\]
We note that
\begin{equation*}
\label{eq:main82''}
 \begin{split}
 \kappa_1^\epsilon(x) \nabla \omega_1^\epsilon(x) 
 = \kappa_1(x,\frac{x}{\epsilon})\big((e^i + \nabla_y N_1^i (x,\frac{x}{\epsilon}))+ \epsilon \nabla_x N^i_1(x,\frac{x}{\epsilon})\big),\\
 \kappa_2^\epsilon(x) \nabla \omega_2^\epsilon(x)
 = \kappa_2(x,\frac{x}{\epsilon})\big((e^i + \nabla_yN_2^i (x,\frac{x}{\epsilon}))+ \epsilon \nabla_x N^i_2(x,\frac{x}{\epsilon})\big).
  \end{split}
 \end{equation*}
 Also, note that due to $Y$-periodicity of $\kappa$ and $N^i$, we have
\begin{equation*}
\label{eq:main82'''}
 \begin{split}
&\kappa_1(x,\frac{x}{\epsilon}) (e^i + \nabla_y N_1^i (x,\frac{x}{\epsilon}))
 \wc \int_Y \kappa_1(x, y)(e^i + \nabla_y N_1^i (x,y)) \mathrm{d}y\\
&\kappa_2(x,\frac{x}{\epsilon}) (e^i + \nabla_y N_2^i (x,\frac{x}{\epsilon}))
 \wc \int_Y \kappa_2(x, y)(e^i + \nabla_y N_2^i (x,y)) \mathrm{d}y
\enspace \textrm{in} \enspace H 
  \end{split}
 \end{equation*}
We observe that $x_i + \epsilon N^i_1 \rightarrow x_i$ strongly in $H$.
Passing to the limit in the left hand side of (\ref{eq:main82'}), 
we obtain from Lemma \ref{lemma2},
\begin{equation}
\label{eq:main82''''}
 \begin{split}
 \int_0^T \int_\Omega \left(\int_Y {\mathcal C}_{11} \mathrm{d}y\right){\partial u_0 \over \partial t} \phi \psi x_i \mathrm{d}x \mathrm{d}t
+ \displaystyle \lim_{\epsilon \to 0 }
\int_0^T\int_\Omega \kappa_1^\epsilon \nabla u_1^\epsilon \cdot\nabla \phi\psi x_i \mathrm{d}x \mathrm{d}t\\
- \int_0^T\int_\Omega \int_Y \kappa_1(e^i + \nabla_y N_1^i) \mathrm{d}y \cdot\nabla \phi \psi u_0 \mathrm{d}x\mathrm{d}t
+ \int_0^T\int_\Omega \left(\int_Y {\mathcal C}_{22} \mathrm{d}y\right){\partial u_0 \over \partial t} \phi \psi x_i \mathrm{d}x\mathrm{d}t\\
+ \displaystyle \lim_{\epsilon \to 0 } 
\int_0^T\int_\Omega \kappa_2^\epsilon \nabla u_2^\epsilon \cdot \nabla \phi \psi x_i \mathrm{d}x\mathrm{d}t
- \int_0^T\int_\Omega \int_Y \kappa_2 (e^i + \nabla_y N_2^i) \mathrm{d}y \cdot\nabla \phi \psi u_0 \mathrm{d}x\mathrm{d}t.
 \end{split}
 \end{equation}
 Note that  $\int_{\Omega} q \phi \omega_k^\epsilon \mathrm{d}x \rightarrow \int_\Omega  q \phi x_i \mathrm{d}x$
 since $\omega_k^\epsilon\phi \rightarrow x_i\phi$ in $H$ weakly.
 Thus from  (\ref{eq:main82'}) and (\ref{eq:main82''''}), we have
 \begin{equation}
\label{eq:main84}
 \begin{split}
 \int_0^T\int_\Omega\left( \int_Y {\mathcal C}_{11} \mathrm{d}y\right){\partial u_0 \over \partial t} \phi \psi x_i \mathrm{d}x\mathrm{d}t
+  \displaystyle \lim_{\epsilon \to 0 }
\int_0^T\int_\Omega \kappa_1^\epsilon \nabla u_1^\epsilon \cdot\nabla \phi \psi x_i\mathrm{d}x\mathrm{d}t\\
-\int_0^T \int_\Omega\int_Y \kappa_1(e^i + \nabla_y N_1^i) \mathrm{d}y\cdot(\nabla \phi) \psi u_0 \mathrm{d}x\mathrm{d}t
+ \int_0^T\int_\Omega \left(\int_Y {\mathcal C}_{22} \mathrm{d}y\right){\partial u_0 \over \partial t} \phi \psi x_i \mathrm{d}x\mathrm{d}t\\
+  \displaystyle \lim_{\epsilon \to 0 }
\int_0^T\int_\Omega \kappa_2^\epsilon \nabla u_2^\epsilon \cdot\nabla \phi\psi x_i\mathrm{d}x\mathrm{d}t
- \int_0^T\int_\Omega \int_Y \kappa_2 (e^i + \nabla_y N_2^i) \mathrm{d}y \cdot\nabla \phi\psi u_0\mathrm{d}x\mathrm{d}t\\
=2 \int_0^T\int_\Omega  q \phi \psi x_i \mathrm{d}x \mathrm{d}t
 + \displaystyle \lim_{\epsilon \to 0} 
 \int_0^T\int_{\Omega} \div_x(\kappa_1(\cdot,{\cdot\over\epsilon})(e^i + \nabla_y N_1^i(\cdot,{\cdot\over\epsilon}))) \phi \psi u_1^\epsilon \mathrm{d}x\mathrm{d}t\\
 + \displaystyle \lim_{\epsilon \to 0} 
 \int_0^T\int_{\Omega} \div_x(\kappa_2(\cdot,{\cdot\over\epsilon})(e^i + \nabla_y N_2^i(\cdot,{\cdot\over\ep}))) \phi \psi u_2^\epsilon \mathrm{d}x\mathrm{d}t
 \end{split}
 \end{equation}
Let $\phi_1$ and $\phi_2$ in (\ref{eq:main68'}) be $\phi x_i$ where $\phi\in C^\infty_0(\Omega)$. Adding the two equations, we have
\begin{equation*}
 \begin{split}
 \int_0^T\int_{\Omega}{\mathcal C}_{11}^\epsilon{\partial u_1^\epsilon  \over \partial t} \phi \psi x_i \mathrm{d}x\mathrm{d}t
+  \int_0^T\int_{\Omega}{\mathcal C}_{22}^\epsilon{\partial u_2^\epsilon  \over \partial t} \phi \psi x_i \mathrm{d}x\mathrm{d}t
+ \int_0^T \int_{\Omega} \kappa_1^\epsilon\nabla u_1^\epsilon \cdot \nabla (\phi x_i) \psi \mathrm{d}x \mathrm{d}t\\
+  \int_0^T\int_{\Omega} \kappa_2^\epsilon\nabla u_2^\epsilon \cdot \nabla (\phi x_i)\psi \mathrm{d}x\mathrm{d}t
= 2  \int_0^T\int_{\Omega}q \phi\psi  x_i \mathrm{d}x\mathrm{d}t
 \end{split}
 \end{equation*} 
 Passing to the limit, we obtain
\begin{equation}
\label{eq:main84'''}
 \begin{split}
\int_0^T\int_\Omega\left( \int_Y {\mathcal C}_{11} \mathrm{d}y\right){\partial u_0 \over \partial t} \phi\psi x_i \mathrm{d}x\mathrm{d}t
+ \int_0^T\int_\Omega\left( \int_Y {\mathcal C}_{22} \mathrm{d}y\right){\partial u_0 \over \partial t} \phi\psi  x_i \mathrm{d}x\mathrm{d}t\\
+\displaystyle \lim_{\epsilon \to 0} 
\int_0^T\int_\Omega \kappa_1^\epsilon \nabla u_1^\epsilon\nabla\cdot (\phi x_i)\psi  \mathrm{d}x\mathrm{d}t
+ \displaystyle \lim_{\epsilon \to 0} 
\int_0^T\int_\Omega \kappa_2^\epsilon \nabla u_2^\epsilon\cdot \nabla (\phi x_i) \psi \mathrm{d}x\mathrm{d}t
= 2\int_0^T \int_\Omega q (\phi x_i)\psi  \mathrm{d}x\mathrm{d}t
 \end{split}
 \end{equation}
 Using (\ref{eq:main84}) and (\ref{eq:main84'''}), one obtains
 \begin{equation*}
\label{eq:main84''''}
 \begin{split}
&\displaystyle \lim_{\epsilon \to 0} 
\int_0^T \int_{\Omega} \kappa_1^\epsilon \nabla u_1^\epsilon \cdot e^i \phi \psi \mathrm{d}x\mathrm{d}t
+ \displaystyle \lim_{\epsilon \to 0} 
\int_0^T\int_{\Omega} \kappa_2^\epsilon \nabla u_2^\epsilon \cdot e^i \phi \psi  \mathrm{d}x\mathrm{d}t\\
&= - \int_0^T\int_\Omega \int_Y \kappa_1(e^i + \nabla_y N_1^i) \mathrm{d}y \cdot\nabla \phi \psi u_0 \mathrm{d}x\mathrm{d}t
- \int_0^T\int_\Omega \int_Y \kappa_2(e^i + \nabla_y N_2^i) \mathrm{d}y \cdot \nabla \phi \psi u_0 \mathrm{d}x\mathrm{d}t\\
&- \displaystyle \lim_{\epsilon \to 0}
\int_0^T \int_{\Omega} \div_x(\kappa_1(x,\frac{x}{\epsilon})(e^i + \nabla_y N_1^i(x,\frac{x}{\epsilon}))) \phi \psi u_1^\epsilon \mathrm{d}x\mathrm{d}t
- \displaystyle \lim_{\epsilon \to 0} 
\int_0^T\int_{\Omega} \div_x(\kappa_2(x,\frac{x}{\epsilon})(e^i + \nabla_y N_2^i(x,\frac{x}{\epsilon}))) \phi\psi  u_2^\epsilon \mathrm{d}x\mathrm{d}t
 \end{split}
 \end{equation*}
 Since $\kappa_1$, $\kappa_2$, $N_1^i$ and $N_2^i$ are independent of $t$, by Lemma \ref{lemma2}, we have
  \begin{equation*}
\label{eq:main84*}
 \begin{split}
&\displaystyle \lim_{\epsilon \to 0} 
\int^T_0\int_{\Omega} \kappa_1^\epsilon \nabla u_1^\epsilon \cdot e^i \phi \psi \mathrm{d}x \dt
+ \displaystyle \lim_{\epsilon \to 0} 
\int^T_0\int_{\Omega} \kappa_2^\epsilon \nabla u_2^\epsilon \cdot e^i \phi\psi  \mathrm{d}x \mathrm{d}t\\
&= -  \int^T_0\int_\Omega \left(\int_Y \kappa_1(e^i + \nabla_y N_1^i\right)\mathrm{d}y\big)\cdot\nabla \phi \psi u_0 \mathrm{d}x\mathrm{d}t
- \int^T_0\int_\Omega \left(\int_Y \kappa_2(e^i + \nabla_y N_2^i)\mathrm{d}y\right)\cdot\nabla \phi \psi u_0 \mathrm{d}x\mathrm{d}t\\
&- \displaystyle 
\int^T_0\int_{\Omega} \div_x \left(\int_Y \kappa_1(e^i + \nabla_y N_1^i)\mathrm{d}y\right) \phi \psi u_0 \mathrm{d}x\mathrm{d}t
- \displaystyle  
\int^T_0\int_{\Omega} \div_x  \left(\int_Y \kappa_2(e^i + \nabla_y N_2^i)\mathrm{d}y\right) \phi\psi  u_0 \mathrm{d}x\mathrm{d}t.
 \end{split}
 \end{equation*}
 Therefore, we have
\begin{equation*}
\label{eq:main84**}
 \begin{split}
&\displaystyle \lim_{\epsilon \to 0}
\int^T_0 \left(\int_{\Omega} \kappa_1^\epsilon \nabla u_1^\epsilon \cdot e^i \phi  \mathrm{d}x\right)\psi \mathrm{d}t
+ \displaystyle \lim_{\epsilon \to 0} 
\int^T_0 \left(\int_{\Omega} \kappa_2^\epsilon \nabla u_2^\epsilon \cdot e^i \phi \mathrm{d}x\right)\psi\mathrm{d}t\\
&=  \int^T_0\left(\int_\Omega \left(\int_Y \kappa_1(e^i + \nabla_y N_1^i)\mathrm{d}y\right)\cdot\nabla u_0 \phi \mathrm{d}x\right)\psi\mathrm{d}t
+ \int^T_0\left(\int_\Omega \left(\int_Y \kappa_2(e^i + \nabla_y N_2^i)\mathrm{d}y\right)\cdot\nabla u_0 \phi \mathrm{d}x\right)\psi\mathrm{d}t\\
 \end{split}
 \end{equation*}
 From this, we deduce
\begin{equation}
\label{eq:main85}
 \begin{split}
&  \displaystyle \lim_{\epsilon \to 0} \left[ 
  \int_0^T\int_\Omega \kappa_1^\epsilon(x) \nabla u_1^\epsilon(x)\cdot \nabla \phi\psi  \mathrm{d}x \mathrm{d}t
+   \int_0^T\int_\Omega \kappa_2^\epsilon(x) \nabla u_2^\epsilon(x)\cdot \nabla \phi\psi  \mathrm{d}x \mathrm{d}t\right] \\
&=\lim_{\epsilon\to 0}\left[\int_0^T\int_\Omega\kappa_1^\ep\nabla u_1^\epsilon\cdot e^i{\partial\phi\over\partial x_i}\psi dxdt+\int_0^T\int_\Omega \kappa_2^\ep\nabla u_2^\epsilon\cdot e^i{\partial\phi\over\partial x_i}\psi dxdt\right]\\
&=   \int_0^T\int_\Omega \int_Y  \kappa_1(x,y) (\delta_{ij} + {\partial  N^i_1(x,y)\over \partial y_j}) \mathrm{d}y {\partial u_0\over\partial x_j}(x) {\partial\phi\over\partial x_i}\psi  \mathrm{d}x\mathrm{d}t\\
&+  \int_0^T\int_\Omega \int_Y  \kappa_2(x,y) (\delta_{ij} + {\partial  N^i_2(x,y)\over \partial y_j}) \mathrm{d}y {\partial u_0\over\partial x_j}(x) {\partial\phi\over\partial x_i}\psi  \mathrm{d}x \mathrm{d}t 
 \end{split}
 \end{equation}
For consistency with formula \eqref{eq:main16}, we note the following result.
\begin{lemma}
\label{lemmaij}
$\int_Y  \kappa_1  {\partial  N^j_1(x,y)\over \partial y_i}\dy+\int_Y  \kappa_2  {\partial  N^j_2(x,y)\over \partial y_i}\dy 
= \int_Y \kappa_1  {\partial  N^i_1(x,y)\over \partial y_j}\dy+\int_Y  \kappa_2  {\partial  N^i_2(x,y)\over \partial y_j}\dy$ 
\end{lemma}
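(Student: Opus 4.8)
The plan is to read the claimed identity as nothing more than an expression of the symmetry of the bilinear form $B(x;\cdot,\cdot)$ defined earlier. The key preliminary observation is that $B$ is symmetric in its two vector arguments: each of the two diffusion terms $\int_Y\kappa_k\nabla_y\phi_k\cdot\nabla_y\psi_k\dy$ (for $k=1,2$) and the coupling term $\int_YQ(\phi_1-\phi_2)(\psi_1-\psi_2)\dy$ is manifestly unchanged when $(\phi_1,\phi_2)$ and $(\psi_1,\psi_2)$ are interchanged. Hence $B(x;(\phi_1,\phi_2),(\psi_1,\psi_2))=B(x;(\psi_1,\psi_2),(\phi_1,\phi_2))$ for all pairs in $W$.

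Next I would test the weak form of the cell problem against the corrector of the \emph{other} coordinate direction. Recall that adding the two equations in \eqref{eq:main17} gives, for every $(\phi_1,\phi_2)\in W$,
\beq
B(x;(N_1^i,N_2^i),(\phi_1,\phi_2))=-\int_Y\kappa_1(x,y)e^i\cdot\nabla_y\phi_1\dy-\int_Y\kappa_2(x,y)e^i\cdot\nabla_y\phi_2\dy.
\eeq
Choosing $(\phi_1,\phi_2)=(N_1^j,N_2^j)$ and using that $e^i\cdot\nabla_yN_k^j$ is the component $\partial N_k^j/\partial y_i$, the right-hand side becomes $-\int_Y\kappa_1\,(\partial N_1^j/\partial y_i)\dy-\int_Y\kappa_2\,(\partial N_2^j/\partial y_i)\dy$. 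Interchanging the roles of $i$ and $j$, I would test the cell problem for direction $j$ against $(N_1^i,N_2^i)$, obtaining $B(x;(N_1^j,N_2^j),(N_1^i,N_2^i))=-\int_Y\kappa_1\,(\partial N_1^i/\partial y_j)\dy-\int_Y\kappa_2\,(\partial N_2^i/\partial y_j)\dy$.

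Finally, the symmetry of $B$ forces the two left-hand sides to coincide, so the two right-hand sides are equal; multiplying through by $-1$ yields precisely the asserted identity. I do not expect any serious obstacle: the whole argument is the standard ``test with the other corrector'' symmetrization, and the only points needing care are checking the symmetry of $B$ (immediate, since all three summands are symmetric) and correctly reading off that $e^i\cdot\nabla_yN_k^j=\partial N_k^j/\partial y_i$. The coupling term, which might superficially look like it breaks the usual single-equation symmetry, causes no difficulty because it enters $B$ symmetrically and hence contributes identically to both pairings.
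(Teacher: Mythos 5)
Your proof is correct, and it takes a genuinely cleaner route than the paper's. You exploit the symmetry of the combined bilinear form $B$ and test the summed variational identity $B(x;(N_1^i,N_2^i),(\phi_1,\phi_2))=-\int_Y\kappa_1 e^i\cdot\nabla_y\phi_1\,\dy-\int_Y\kappa_2 e^i\cdot\nabla_y\phi_2\,\dy$ with the matched pair $(\phi_1,\phi_2)=(N_1^j,N_2^j)$; since each component of the test pair couples only with its own continuum in $B$, no cross terms between the two continua ever appear, and the coupling term $\int_Y Q(\cdot-\cdot)(\cdot-\cdot)\,\dy$ is manifestly symmetric, so the identity drops out in two lines. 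The paper instead works in Section 5 without invoking $B$: it adds the two strong-form cell equations (which cancels the $Q$-coupling), tests the resulting divergence identity against the sum $N_1^j+N_2^j$, and obtains a four-term identity containing genuine cross terms such as $\int_Y\kappa_2(e^i+\nabla_y N_2^i)\cdot\nabla_y N_1^j\,\dy$ and $\int_Y\kappa_1(e^i+\nabla_y N_1^i)\cdot\nabla_y N_2^j\,\dy$, whose $i\leftrightarrow j$ symmetry is not automatic; the paper must then return to the weak form and rewrite these cross terms as $\int_Y Q(N_1^i-N_2^i)(N_1^j-N_2^j)\,\dy$ to see that they match under interchange of $i$ and $j$. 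Your argument buys brevity and reuses the coercive-form framework the paper already built in Section 2.2, making the symmetry structurally transparent; the paper's computation is longer but self-contained at the level of the individual cell equations and makes explicit how the interaction term redistributes between the two continua.
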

\begin{proof}
From the cell problem, we have
\begin{equation*}
  \label{eq:2}
 \begin{split}
   \int_Y \kappa_1(e^i+\nabla_y N_1^i)\cdot\nabla_yN_1^j \mathrm{d}y &
   +  \int_Y \kappa_2(e^i+\nabla_y N_2^i)\cdot \nabla_yN_1^j \mathrm{d}y\\
   &+ \int_Y \kappa_1(e^i+\nabla_y N_1^i)\cdot\nabla_yN_2^j\mathrm{d}y
   + \int_Y \kappa_2(e^i+\nabla_y N_2^i)\cdot \nabla_yN_2^j\mathrm{d}y= 0\\
      \int_Y \kappa_1(e^j+\nabla_y N_1^j)\cdot\nabla_yN_1^i\mathrm{d}y 
      &+  \int_Y \kappa_2(e^j+\nabla_y N_2^j)\cdot \nabla_yN_1^i\mathrm{d}y\\
   &+ \int_Y \kappa_1(e^j+\nabla_y N_1^j)\cdot\nabla_yN_2^i\mathrm{d}y
   + \int_Y \kappa_2(e^j+\nabla_y N_2^j)\cdot \nabla_yN_2^i\mathrm{d}y= 0.\\
    \end{split}
\end{equation*}
Thus,
\begin{equation}
  \label{eq:3}
 \begin{split}
   \int_Y \kappa_1\frac{\partial N_1^j}{\partial y_i}\mathrm{d}y 
   +  \int_Y \kappa_2(e^i+\nabla_y N_2^i)\cdot \nabla_yN_1^j\mathrm{d}y
   + \int_Y \kappa_1(e^i+\nabla_y N_1^i)\cdot\nabla_yN_2^j\mathrm{d}y
   + \int_Y \kappa_2 \frac{\partial N_2^j}{\partial y_i}\mathrm{d}y\\
 =   \int_Y \kappa_1\frac{\partial N_1^i}{\partial y_j} \mathrm{d}y
 +  \int_Y \kappa_2(e^j+\nabla_y N_2^j)\cdot \nabla_yN_1^i\mathrm{d}y
   + \int_Y \kappa_1(e^j+\nabla_y N_1^j)\cdot\nabla_yN_2^i \mathrm{d}y
   + \int_Y \kappa_2 \frac{\partial N_2^i}{\partial y_j}\mathrm{d}y   
    \end{split}
\end{equation}
Now we  show
\begin{equation*}
  \label{eq:4}
 \begin{split}
   \int_Y \kappa_2(e^i+\nabla_y N_2^i)\cdot \nabla_yN_1^j \mathrm{d}y
   + \int_Y \kappa_1(e^i+\nabla_y N_1^i)\cdot\nabla_yN_2^j\mathrm{d}y\\
 =    \int_Y \kappa_2(e^j+\nabla_y N_2^j)\cdot \nabla_yN_1^i\mathrm{d}y
   + \int_Y \kappa_1(e^j+\nabla_y N_1^j)\cdot\nabla_yN_2^i\mathrm{d}y.
    \end{split}
\end{equation*}
From the cell problem, we know that
\begin{equation}
  \label{eq:5}
 \begin{split}
   \int_Y \kappa_2(e^i+\nabla_y N_2^i)\cdot \nabla_yN_1^j \mathrm{d}y
   + \int_Y \kappa_1(e^i+\nabla_y N_1^i)\cdot\nabla_yN_2^j \mathrm{d}y\\
   = \int_Y Q(N_1^i - N_2^i)N_1^j + Q(N_2^i-N_1^i)N_2^j \mathrm{d}y\\
   = \int_Y Q(N_1^i N_1^j - N_2^i N_1^j + N_2^i N_2^j - N_1^i N_2^j)\mathrm{d}y\\
   = \int_Y Q(N_1^j-N_2^j)N_1^i + Q(N_2^j-N_1^j)N_2^i\mathrm{d}y\\
   =\int_Y \kappa_2(e^j+\nabla_y N_2^j)\cdot \nabla_yN_1^i \mathrm{d}y
   + \int_Y \kappa_1(e^j+\nabla_y N_1^j)\cdot\nabla_yN_2^i \mathrm{d}y
    \end{split}
\end{equation}
Thus, by (\ref{eq:3}) and (\ref{eq:5}), we have the result.
\end{proof}
\begin{theorem}
Assume that the solution $N_1^i$ and $N_2^i$ of cell problem \eqref{eq:main17} belong to $C^2(\bar\Omega,C^2(\bar Y))$ and the coefficients $\kappa_1$ and $\kappa_2$ belong to $C^1(\bar\Omega,C^1(\bar Y))$.
The limit function $u_0$ of the sequences $u_1^\ep$, $u_2^\ep$ is the unique solution of the homogenized 
equation (\ref{eq:main15}) with the initial condition \eqref{eq:initcond}.
\end{theorem}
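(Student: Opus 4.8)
The plan is to assemble the convergence results already established in this section into the weak formulation of the homogenized problem and then identify the limit. First I record that the uniform bounds of Lemma \ref{unibound}, together with the control of the $\epsilon^{-2}$ interaction term in the energy identity (\ref{eq:main73}) (which forces $\|u_1^\epsilon-u_2^\epsilon\|_{L^2(0,T;H)}=O(\epsilon)$), guarantee along a subsequence the weak limit $u_1^\epsilon,u_2^\epsilon\wc u_0$ in $L^2(0,T;V)$, with $u_0$ common to both continua. It therefore suffices to show that this $u_0$ solves (\ref{eq:main15}) with the initial condition (\ref{eq:initcond}), and then to invoke uniqueness.

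The core step is to pass to the limit in the summed weak formulation. Taking $\phi_1=\phi_2=\phi(x)\psi(t)$ in (\ref{eq:main68''}) with $\phi\in C_0^\infty(\Omega)$ and $\psi\in C_0^\infty(0,T)$, the two interaction terms cancel exactly upon addition, and the source collapses to $2\int q\phi\psi$. For the parabolic terms I integrate by parts in $t$ and use that $C_{ii}^\epsilon\wc\langle C_{ii}\rangle$ in $H$ together with the strong $H$-convergence furnished by Lemma \ref{lemma2}, obtaining the limit $\int_0^T\int_\Omega(\langle C_{11}\rangle+\langle C_{22}\rangle)\partial_t u_0\,\phi\psi$. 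The diffusion terms are precisely the ones treated by the oscillating test-function computation of this section: their limit is given by (\ref{eq:main85}).

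The key point is then to reconcile the index structure. The flux in (\ref{eq:main85}) carries the combination $\int_Y\kappa_k(\delta_{ij}+\partial_{y_j}N_k^i)\,dy$, whereas the homogenized coefficients $\kappa_{kij}^*$ of (\ref{eq:main16}) involve $\partial_{y_i}N_k^j$. This is exactly the content of Lemma \ref{lemmaij}, which shows that the sum over both continua of these two index arrangements coincides entry-by-entry; hence the limiting diffusion term equals $\int_0^T\int_\Omega(\kappa_{1ij}^*+\kappa_{2ij}^*)\partial_{x_j}u_0\,\partial_{x_i}\phi\,\psi$, the weak form of $\text{div}(\kappa_1^*\nabla u_0)+\text{div}(\kappa_2^*\nabla u_0)$. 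Since $\phi$ and $\psi$ are arbitrary, $u_0$ satisfies (\ref{eq:main15}) in the weak sense.

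It remains to recover (\ref{eq:initcond}) and to conclude uniqueness. For the initial condition I repeat the limiting argument but now allow $\psi\in C^\infty([0,T])$ with $\psi(T)=0$ and $\psi(0)\neq 0$; the integration by parts in $t$ then produces boundary contributions $-C_{ii}^\epsilon u_i^\epsilon(0,\cdot)\phi\psi(0)=-C_{ii}^\epsilon g_i\phi\psi(0)$, which pass in the limit to $-\langle C_{ii}\rangle g_i\phi\psi(0)$. Comparing with the corresponding boundary term $-(\langle C_{11}\rangle+\langle C_{22}\rangle)u_0(0,\cdot)\phi\psi(0)$ of the limit equation yields (\ref{eq:initcond}). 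Finally, since (\ref{eq:main15}) together with (\ref{eq:initcond}) has a unique solution, every subsequential limit coincides with this $u_0$, so the whole family $u_1^\epsilon,u_2^\epsilon$ converges. The main obstacle throughout is the diffusion term, whose limit rests on the oscillating test-function machinery culminating in (\ref{eq:main85}) and on the symmetry identity of Lemma \ref{lemmaij}; once the strong-convergence Lemma \ref{lemma2} is in hand, the parabolic terms, the source, and the initial-condition computation are routine.
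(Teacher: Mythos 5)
Your proposal is correct and follows essentially the same route as the paper: pass to the limit in the summed weak formulation (where the $\epsilon^{-2}$ interaction terms cancel), handle the time-derivative terms by integration by parts together with the strong convergence of Lemma \ref{lemma2}, identify the limiting diffusion term via the oscillating test-function result (\ref{eq:main85}) combined with the index-symmetry Lemma \ref{lemmaij}, and recover the initial condition (\ref{eq:initcond}) by comparing the $t=0$ boundary terms produced by test functions with $\psi(T)=0$, $\psi(0)\neq 0$. Your explicit remark that uniqueness upgrades subsequential convergence to convergence of the whole family is a minor (and welcome) addition, not a deviation.
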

\begin{proof}
Note that from the equation (\ref{eq:main67}), we obtain
\begin{equation*}
\label{eq:main91}
 \begin{split}
\int_0^T\int_{\Omega}{\mathcal C}_{11}^\epsilon{\partial u_1^\epsilon  \over \partial t}\phi \mathrm{d}x \psi \mathrm{d}t
+ \int_0^T\int_{\Omega} \kappa_1^\epsilon\nabla u_1^\epsilon \cdot \nabla \phi \mathrm{d}x \psi\mathrm{d}t
+\int_0^T\int_{\Omega}{\mathcal C}_{22}^\epsilon{\partial u_2^\epsilon  \over \partial t} \phi \mathrm{d}x \psi\mathrm{d}t
+ \int_0^T\int_{\Omega} \kappa_2^\epsilon\nabla u_2^\epsilon \cdot \nabla \phi \mathrm{d}x \psi\mathrm{d}t\\
 = 2\int_0^T\int_{\Omega}q \phi \mathrm{d}x \psi \mathrm{d}t.
 \end{split}
 \end{equation*}
for all $\phi\in \mathcal{C}^\infty_0 (\Omega)$ and $\psi\in C^\infty_0((0,T))$.
Passing to the limit, from (\ref{eq:main85}), Lemmas \ref{lemma2} and \ref{lemmaij}, we have
\begin{equation*}
 \label{eq:main93}
 \begin{split}
\int_0^T \int_\Omega \{(\int_Y {\mathcal C}_{11} \mathrm{d}y ) +(\int_Y {\mathcal C}_{22}  \mathrm{d}y )\}
{\partial u_{0} \over \partial t} \phi \mathrm{d}x \psi \mathrm{d}t\\
= \int_0^T \int_\Omega\div(\kappa^*_1 \nabla u_0) \phi \mathrm{d}x \psi \mathrm{d}t
+ \int_0^T \int_\Omega\div(\kappa^*_2 \nabla u_0)\phi \mathrm{d}x \psi \mathrm{d}t
+\int_0^T \int_\Omega 2q \phi \mathrm{d}x \psi \mathrm{d}t
\end{split}
\end{equation*}
where 
\begin{equation*}
 \label{eq:main94}
 \begin{split}
\kappa^*_{1ij}(x) =  \int_Y  \kappa_1 (x,y)(\delta_{ij} + {\partial  N^j_1(x,y)\over \partial y_i}) \dy\\
\kappa^*_{2ij}(x) =  \int_Y  \kappa_2 (x,y)(\delta_{ij} + {\partial  N^j_2(x,y)\over \partial y_i}) \dy.
\end{split}
\end{equation*}
We now show the initial condition.
Adding (\ref{eq:main67}) and (\ref{eq:main68}), we have
\begin{equation*}
{\mathcal C}_{11}^\epsilon{\partial u_1^\epsilon  \over \partial t} + {\mathcal C}_{22}^\epsilon{\partial u_2^\epsilon  \over \partial t}
-\nabla\cdot (\kappa_1^\ep \nabla u_1^\ep) - \nabla\cdot (\kappa_2^\ep \nabla u_2^\ep) = 2q.
\end{equation*}
As $u_1^\ep$ and $u_2^\ep$ are bounded in $L^2(0,T;V)$, we deduce that ${\mathcal C}_{11}^\epsilon{\partial u_1^\epsilon  \over \partial t} + {\mathcal C}_{22}^\epsilon{\partial u_2^\epsilon  \over \partial t}$ is bounded in $L^2(0,T;V')$.
Let $\psi(t,x) \in {\mathcal C}_0^\infty (0,T;V)$, i.e. $\psi(0,x) = \psi(T,x) = 0$. We have
\begin{equation*}
\begin{split}
\int_0^T \int_\Omega \big( {\mathcal C}_{11}^\epsilon{\partial u_1^\epsilon  \over \partial t} + {\mathcal C}_{22}^\epsilon{\partial u_2^\epsilon  \over \partial t} \big) \psi \dx \dt = - \int_0^T \int_\Omega \big( {\mathcal C}_{11}^\epsilon  u_1^\epsilon  + {\mathcal C}_{22}^\epsilon u_2^\epsilon \big) \frac{\partial \psi}{\partial t} \dx \dt \\
\rightarrow - \int_0^T \int_\Omega \big(\langle {\mathcal C}_{11} \rangle+ \langle{\mathcal C}_{22} \rangle\big) u_0 \frac{\partial \psi}{\partial t} \dx \dt = \int_0^T \int_\Omega \big(\langle {\mathcal C}_{11} \rangle+ \langle{\mathcal C}_{22} \rangle\big)  \frac{\partial u_0}{\partial t} \psi \dx \dt.
\end{split}
\end{equation*}
This shows that the weak limit of ${\mathcal C}_{11}^\epsilon{\partial u_1^\epsilon  \over \partial t} + {\mathcal C}_{22}^\epsilon{\partial u_2^\epsilon  \over \partial t} $ in $L^2(0,T;V')$ is $ \big(\langle {\mathcal C}_{11} \rangle+ \langle{\mathcal C}_{22} \rangle\big)  \frac{\partial u_0}{\partial t}$. Now we choose $\psi \in {\mathcal C}^\infty (0,T;V) $ so that $\psi(T,x)=0$. Then

\begin{equation*}
\begin{split}
\int_0^T \int_\Omega \big( {\mathcal C}_{11}^\epsilon{\partial u_1^\epsilon  \over \partial t} + {\mathcal C}_{22}^\epsilon{\partial u_2^\epsilon  \over \partial t} \big) \psi \dx \dt &= - \int_0^T \int_\Omega \big( {\mathcal C}_{11}^\epsilon  u_1^\epsilon  + {\mathcal C}_{22}^\epsilon u_2^\epsilon \big) \frac{\partial \psi}{\partial t} \dx \dt + \int_\Omega \big( {\mathcal C}_{11}^\epsilon  u_1^\epsilon(0,x)  + {\mathcal C}_{22}^\epsilon u_2^\epsilon(0,x) \big)\psi(0,x)\dx \\
&\rightarrow - \int_0^T \int_\Omega \big(\langle {\mathcal C}_{11} \rangle+ \langle{\mathcal C}_{22} \rangle\big) u_0 \frac{\partial \psi}{\partial t} \dx \dt + \int_\Omega \big(\langle {\mathcal C}_{11} \rangle g_1+ \langle{\mathcal C}_{22} \rangle g_2 \big)  \psi(0,x) \dx.
\end{split}
\end{equation*}
On the other hand
\begin{equation*}
\begin{split}
\int_0^T \int_\Omega \big( {\mathcal C}_{11}^\epsilon{\partial u_1^\epsilon  \over \partial t} + {\mathcal C}_{22}^\epsilon{\partial u_2^\epsilon  \over \partial t} \big) \psi \dx \dt 
\rightarrow  \int_0^T \int_\Omega \big(\langle {\mathcal C}_{11} \rangle+ \langle{\mathcal C}_{22} \rangle\big)  \frac{\partial u_0}{\partial t} \psi \dx \dt.\\
=- \int_0^T \int_\Omega \big(\langle {\mathcal C}_{11} \rangle+ \langle{\mathcal C}_{22} \rangle\big) u_0 \frac{\partial \psi}{\partial t} \dx \dt + \int_\Omega \big(\langle {\mathcal C}_{11} \rangle+ \langle{\mathcal C}_{22} \rangle \big) u_0(0,x) \psi(0,x) \dx.
\end{split}
\end{equation*}
This shows that $\big(\langle {\mathcal C}_{11} \rangle+ \langle{\mathcal C}_{22} \rangle \big) u_0(0,x) = \langle {\mathcal C}_{11} \rangle g_1(x)+ \langle{\mathcal C}_{22} \rangle g_2(x)$. i.e. the initial condition of $u_0$ is
\begin{equation}
u_0(0,x) = 
\frac{\langle \mathcal{C}_{11} \rangle g_1(x) + \langle \mathcal{C}_{22} \rangle g_2(x)}{\langle \mathcal{C}_{11} \rangle + \langle \mathcal{C}_{22} \rangle}
\end{equation}
\end{proof}

\section{Conclusions}

In this paper, we developed an efficient algorithm for computing the effective coefficients of a coupled multiscale multi-continuum system. We derived the coupled cell problems and the homogenized equation from twosale asymptotic expansion. We solved the cell problems using hierarchical FE algorithm and use the solutions to compute the effective coefficients. To establish the hierarchical FE algorithm, we first constructed a dense hierarchy of macrogrids and the corresponding nested FE spaces. Based on the hierarchy, we solve the cell problems using different resolution FE spaces at different macroscopic points. We use solutions solved with a higher level of accuracy to correct solutions obtained with a lower level of accuracy at nearby macroscopic points. We rigorously showed that this hierarchical FE method achieves the same order of accuracy as the reference full solve where cell problems at every macroscopic point are solved with the highest level of accuracy, at a significantly reduced computational cost, using an essentially optimal number of degrees of freedom. For numerical example, we applied this algorithm to a multi-continuum model in a two dimensional domain. The algorithm was implemented on  macroscopic points in a one dimensional domain. The numerical results strongly support the error estimates we provided in section 3. 

\bigskip

\textbf{Acknowledgment}
{A part of this work is conducted when Jun Sur Richard Park was a visiting PhD student at Nanyang Technological University (NTU) under East Asia and Pacific Summer Institutes (EAPSI) programme organized by the US National Science Foundation (NSF) and Singapore National Research Foundation (NRF) under Grant No. 1713805. 
Jun Sur Richard Park thanks US NSF and Singapore NRF for the financial support and NTU for hospitality.} Viet Ha Hoang is supported by the MOE AcRF Tier 1 grant RG30/16 and the MOE Tier 2 grant MOE2017-T2-2-144.

\bibliographystyle{siam}
\bibliography{referencesHier}

\end{document}